\newcommand{\ns}{\mathrm{ns} \,}
\newcommand{\ver}{V}
\newcommand{\fundwt}{\varpi}
\newcommand{\qv}{\mathcal{M}}
\newcommand{\dv}{\mathsf{v}}
\newcommand{\dw}{\mathsf{w}}
\newcommand{\tbw}{\mathcal{W}}
\newcommand{\bA}{\mathsf{A}}
\newcommand{\bT}{\mathsf{T}}
\newcommand{\lb}{\mathscr{L}}
\newcommand{\tb}{\mathcal{V}}
\newcommand{\qmtb}{\mathscr{V}}
\newcommand{\qmtbw}{\mathscr{W}}
\newcommand{\qmb}{\mathscr{N}}
\newcommand{\qmtvir}{\mathscr{T}_{\mathrm{vir}}}
\newcommand{\qmpol}{\mathscr{T}^{1/2}}
\newcommand{\qm}{\mathsf{QM}}
\newcommand{\Cs}{\mathbb{C}^{\times}}
\newcommand{\C}{\mathbb{C}}
\newcommand{\exppref}{\mathbf{e}}
\newcommand{\Phipref}{\Phi}
\DeclareMathOperator{\im}{im}
\DeclareMathOperator{\vir}{vir}
\DeclareMathOperator{\ev}{ev}
\DeclareMathOperator{\eff}{Eff}
\newcommand{\vrs}{\hat{\mathcal{O}}_{\vir}}
\newcommand{\hunter}[1]{{\color{blue}Hunter: #1}}
\newcommand{\vasya}[1]{{\color{purple}Vasya: #1}}
\newcommand\iso{\,\vphantom{j^{X^2}}\smash{\overset{\sim}{\vphantom{\rule{0pt}{0.20em}}\smash{\longrightarrow}}}\,}
\theoremstyle{plain}
\newtheorem{fact}{Fact}[section]
\newtheorem{lemma}[fact]{Lemma}
\newtheorem{theorem}[fact]{Theorem}
\newtheorem{proposition}[fact]{Proposition}
\newtheorem{corollary}[fact]{Corollary}
\newtheorem{conjecture}[fact]{Conjecture}
\theoremstyle{definition}
\newtheorem{example}[fact]{Example}
\newtheorem{remark}[fact]{Remark}
\newtheorem{definition}[fact]{Definition}
\newtheorem{assumption}[fact]{Assumption}
\newcommand{\msver}{\mathsf{V}}
\newcommand{\sspt}{\star}
\newcommand{\slantsum}{\#}
\newcommand{\ssmap}{\Psi}
\DeclareMathOperator{\rank}{rank}
\DeclareMathOperator{\Hom}{Hom}
\DeclareMathOperator{\rep}{Rep}
\newcommand{\chamb}{\mathfrak{C}}
\newcommand{\extpow}{\bigwedge}
\title{Slant sums of quiver gauge theories}
\author{Hunter Dinkins, Vasily Krylov, and Reese Lance}
\date{
    \today
}
\begin{document}

\maketitle

\begin{abstract}
    We define the slant sum of quiver gauge theories, a gluing on the underlying quivers that identifies a gauge vertex with a framing vertex. Under some mild assumptions, we relate torus fixed points on the corresponding Higgs branches, which are Nakajima quiver varieties. Then we prove a ``branching rule" relating the quasimap vertex functions before and after a slant sum and deduce a number of ``factorization" corollaries.
    
    Our construction is motivated by a factorization conjecture for the vertex functions of zero-dimensional quiver varieties, which can be approached inductively using the branching rule. In special cases, it also shows that vertex functions can be written as sums over reverse plane partitions, even outside ADE type. 
    

    We make some conjectures for Coulomb branches reflecting what can be seen on the Higgs side and prove them in ADE type. In particular, we obtain refined character formulas for the so-called ``extremal'' irreducible modules over shifted Yangians.  We also study slant sums of Coulomb branches and their quantizations. We observe that for one-dimensional framing, the slant sum of Coulomb branches is the same as the product.


\end{abstract}

\tableofcontents

\section{Introduction}

\subsection{Slant sums of heaps}

The \emph{slant sum} of two posets was defined by Proctor in \cite{proctor} to break certain posets into simpler, \emph{slant irreducible} pieces. Some particular posets to which this applies are those that appear as \emph{heaps} $H(w)$ of fully commutative elements $w$ of a Weyl group $W$, see \cite{stembridge}. Such posets are defined in a straightforward combinatorial way from a choice of reduced word for $w$.

It was noticed in \cite{heaps} that the heap of a minuscule element gives rise to a module over the preprojective algebra of the corresponding Dynkin quiver. Equivalently, it gives rise to a point in a \emph{Nakajima quiver variety}. When $w$ is \emph{dominant} minuscule, the corresponding quiver variety is a single point, and the preprojective algebra module studied in \cite{heaps} is a choice of representative of this point. 

Suppose $w$, $w^{(1)}$, and $w^{(2)}$ are dominant minuscule elements of some Weyl groups and let $\qv$, $\qv^{(1)}$, and $\qv^{(2)}$ be the respective zero-dimensional Nakajima quiver varieties. If the heap of $w$ decomposes as a slant sum of the other two heaps, written $H(w)=H(w^{(1)})\slantsum H(w^{(2)})$, then we call $\qv$ the slant sum of the two other quiver varieties. 

The aim of this paper is to revisit slant sums from the perspective of quiver varieties, or more precisely, quiver gauge theories. If the numerical data used to construct $\qv^{(1)}$ and $\qv^{(2)}$, which now need not be zero-dimensional, is \emph{compatible}, we define a new quiver variety $\qv:=\qv^{(1)}\slantsum \qv^{(2)}$ which we call the slant sum of the two constituents. Unlike in \cite{heaps}, our procedure quickly leaves the world of Dynkin quivers, which we view as a feature.

\subsection{Slant sums of quiver gauge theories}
Recall that a quiver gauge theory is specified by a choice of quiver $Q=(Q_{0},Q_{1})$ and $\dv,\dw \in \mathbb{N}^{Q_{0}}$. It is convenient to think of $\dv_{i}$ as associated to a gauge vertex $i \in Q_{0}$ and $\dw_{i}$ as associated to a framing vertex, as in the following picture:

\[
\begin{tikzpicture}[->,>=Stealth,thick,node distance=2cm]

  \node[circle,draw,minimum size=0.8cm] (V1) {$\dv_1$};
  \node[circle,draw,minimum size=0.8cm,right of=V1] (V2) {$\dv_{2}$};
\node[circle,draw,minimum size=0.8cm,right of=V2] (V3) {$\dv_{3}$};

  \node[rectangle,draw,minimum size=0.8cm,below of=V1] (W1) {$\dw_{1}$};
  \node[rectangle,draw,minimum size=0.8cm,below of=V2] (W2) {$\dw_{2}$};
  \node[rectangle,draw,minimum size=0.8cm,below of=V3] (W3) {$\dw_{3}$};

  \draw[->] (V1) to (V2);
  \draw[->,bend left=15] (V2) to (V3);
    \draw[->,bend left=15] (V3) to (V2);
      \draw[->,bend right=35] (V3) to (V1);
    \draw[->] (V1) to[out=45,in=135,looseness=10] (V1);

  \draw[->] (W1) -- (V1);
  \draw[->] (W2) -- (V2);
\draw[->] (W3) -- (V3);
 
\end{tikzpicture}
\]

Let $Q^{(r)}$, $\dv^{(r)}$, and $\dw^{(r)}$ for $r\in \{1,2\}$ be two such collections. Suppose there exist vertices $\sspt_{1} \in Q^{(1)}_{0}$ and $\sspt_{2} \in Q^{(2)}_{0}$ such that $\dv^{(1)}_{\sspt_{1}}=\dw^{(2)}_{\sspt_{2}}$. We say that the two quiver gauge theories are slant summable, and we define their slant sum to be the quiver gauge theory defined by identifying the gauge vertex at $\sspt_{1}$ with the framing vertex at $\sspt_{2}$. We will use the symbol ${}_{\sspt_{1}}\slantsum_{\sspt_{2}}$, or just $\slantsum$, throughout to denote the effect of this operation on various objects. For example, the quiver for the slant sum of the two quiver gauge theories is $Q=Q^{(1)} {}_{\sspt_{1}}\slantsum_{\sspt_{2}} Q^{(2)}$. 

The local picture is
\[
\begin{tikzpicture}[->,>=Stealth,thick,node distance=1.8cm]

  \node[circle,draw,minimum size=0.8cm] (V1) {$\dv_{\sspt_{1}}$};
\node[right =0.2cm of V1] (ss){${}_{\sspt_{1}}\slantsum_{\sspt_{2}}$}; 
    \node[rectangle,draw,minimum size=0.8cm,right=1.5 of V1] (W2) {$\dw_{\sspt_{2}}$};
     \node[rectangle,draw,minimum size=0.8cm,below of=V1] (W1) {$\dw_{\sspt_{1}}$};
  \node[circle,draw,minimum size=0.8cm,right of=W2] (V2) {$\dv_{\sspt_{2}}$};

  \node[above left of=V1] (A){};
   \node[above right of=V2] (B){};
    \node[below right of=V2] (C){};
     \node[below left of=V1] (D){};

  \draw[->] (W2) -- (V2);
  \draw[->] (W1) -- (V1);

\node[right=0.8 of V2] (EE) {$=$};
\node[circle,draw,minimum size=0.8cm, right=0.7 of EE] (V3){$\dv_{\sspt_{1}}$};
\node[circle,draw,minimum size=0.8cm, right of=V3] (V4) {$\dv_{\sspt_{2}}$};
\node[rectangle,draw,minimum size=0.8cm,below=1 of V3] (W3) {$\dw_{\sspt_{1}}$};

 \node[above left of=V3] (E){};
   \node[above right of=V4] (F){};
    \node[below right of=V4] (G){};
     \node[below left of=V3] (H){};

 \draw[->] (W3) -- (V3);
  \draw[->] (V3) -- (V4);
 
\end{tikzpicture}
\]

Let $\qv$, $\qv^{(1)}$, and $\qv^{(2)}$ be the associated resolved Higgs branches (equivalently, the Nakajima quiver varieties). From the definitions, 
\[
\dim \qv= \dim \qv^{(1)}+\dim \qv^{(2)}
\]
and it is tempting to think of $\slantsum$ as a sort of product which preserves connectedness of the quivers. Actually, one interpretation of Corollary \ref{cor: CY vertex slant sum factorization} is that $\slantsum$ is closer to being 3d mirror dual to the product, see also Proposition \ref{prop_slant_sum_of_coulomb}.

In general, $\qv$, $\qv^{(1)}$, and $\qv^{(2)}$ are related by a certain diagram, see \eqref{eq: slant sum diagram}. But under some additional assumptions, we are able to define a map from certain torus fixed points on $\qv^{(1)} \times \qv^{(2)}$ to those on $\qv$. Let $\bT$, $\bT^{(1)}$, and $\bT^{(2)}$ be the tori acting on the quiver varieties. Let $\tb_{i}$ be the tautological bundle on $\qv$ for the vertex $i$. Similarly, let $\tb^{(1)}_{j}$ and $\tb^{(2)}_{k}$ be the tautological bundles on $\qv^{(1)}$ and $\qv^{(2)}$.

\begin{theorem}[Theorem \ref{thm: slant sum and fixed points}]\label{thm: slant sum and fixed points intro}
    Suppose that $p^{(1)}\in \left(\qv^{(1)}\right)^{\bT^{(1)}}$ is split over $\sspt_{1}$ in the sense of Definition \ref{def: split fixed points}. There is an embedding\footnote{Strictly speaking, this embedding depends mildly on a choice of ordering of some equivariant parameters, see Proposition \ref{prop: chamber dependence}.}
    \[
\ssmap_{p^{(1)}}: \left( \qv^{(2)}\right)^{\bT^{(2)}} \to \qv^{\bT}.
    \]
    Furthermore, there is an inclusion $\iota:\bT \hookrightarrow \bT^{(1)} \times \bT^{(2)}$ such that $\tb_{i}|_{p}=\iota^{*} \tb^{(r)}_{i}|_{p^{(r)}}$ for $i \in Q^{(r)}$, $r\in\{1,2\}$.
\end{theorem}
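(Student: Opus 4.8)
The plan is to construct $\ssmap_{p^{(1)}}$ by an explicit gluing of quiver representations and then verify that the glued datum lands in $\qv^{\bT}$. Fix a stable representative of $p^{(1)}$, with underlying spaces $V^{(1)}_i=\tb^{(1)}_i|_{p^{(1)}}$ and the associated arrow and framing maps. I read the hypothesis that $p^{(1)}$ is split over $\sspt_1$ (Definition \ref{def: split fixed points}) as providing a decomposition of $V^{(1)}_{\sspt_1}$ into $m:=\dw^{(2)}_{\sspt_2}=\dv^{(1)}_{\sspt_1}$ one–dimensional $\bT^{(1)}$–weight spaces with distinct characters $\chi_1,\dots,\chi_m$. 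Given $p^{(2)}\in(\qv^{(2)})^{\bT^{(2)}}$, fix a stable representative and an ordering of the framing parameters $u_1,\dots,u_m$ of $W^{(2)}_{\sspt_2}$ (this is the ordering of the footnote), yielding an isomorphism $W^{(2)}_{\sspt_2}\iso V^{(1)}_{\sspt_1}$ matching $u_l$ with $\chi_l$. I then build a representation $\tilde p$ of the doubled slant-sum quiver by keeping all $Q^{(1)}$–data of $p^{(1)}$ and all $Q^{(2)}$–data of $p^{(2)}$, discarding the framing at $\sspt_2$, and defining the two maps along the new arrow $\sspt_1\to\sspt_2$ to be the transports of the framing maps $I^{(2)}_{\sspt_2},J^{(2)}_{\sspt_2}$ at $\sspt_2$ under this identification. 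I set $\ssmap_{p^{(1)}}(p^{(2)}):=[\tilde p]$.

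The first thing to check is that $\tilde p$ solves the moment-map equations and is stable for $\qv$. At every vertex other than $\sspt_1,\sspt_2$ the equation is inherited verbatim. At $\sspt_2$ the contribution $B_aB_{a^*}$ of the new arrow coincides with the old framing term $I^{(2)}_{\sspt_2}J^{(2)}_{\sspt_2}$, so that equation is unchanged. At $\sspt_1$ a single new term, the reversed composition $J^{(2)}_{\sspt_2}I^{(2)}_{\sspt_2}$, is added to the (vanishing) moment map of $p^{(1)}$; I expect this to be the main obstacle, and I would dispatch it using the explicit structure of $\bT^{(2)}$–fixed points, at which the cotangent framing map $J^{(2)}_{\sspt_2}$ vanishes for the chosen stability, forcing the new term to vanish. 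For stability, $V^{(1)}$ is generated by its framings by stability of $p^{(1)}$, and the new arrow has the same image as the discarded framing $I^{(2)}_{\sspt_2}$; since its source $V^{(1)}_{\sspt_1}$ is already generated, $V^{(2)}$ is generated by the remaining framings together with the new arrow. Hence $\tilde p$ descends to a point of $\qv$, and replacing $p^{(2)}$ by a gauge-equivalent representative changes $\tilde p$ by a gauge transformation trivial on the $Q^{(1)}$–factor, so $\ssmap_{p^{(1)}}$ is well defined on isomorphism classes.

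Next I construct $\iota$ and check $\bT$–fixedness and the tautological identities. The framing torus of $\qv$ is $\bA^{(1)}\times\bA^{(2)}_{\neq\sspt_2}$ (the parameters at $\sspt_2$ have disappeared), and I define $\iota:\bT\hookrightarrow\bT^{(1)}\times\bT^{(2)}$ to be the natural inclusion on these factors, the diagonal on $\Cs_{\hbar}$, and on the missing $\bA^{(2)}_{\sspt_2}$–direction the homomorphism $\phi:\bA^{(1)}\times\Cs_{\hbar}\to\bA^{(2)}_{\sspt_2}$ determined (for the fixed ordering) by $\phi^{*}u_l=\chi_l$. Because the splitting identifies the $\bT^{(1)}$–action on $V^{(1)}_{\sspt_1}$ with a maximal torus of the gauge group at $\sspt_1$, which matches $\bA^{(2)}_{\sspt_2}$ through $\phi$, the gauge transformations witnessing $\bT^{(1)}$– and $\bT^{(2)}$–fixedness of $p^{(1)}$ and $p^{(2)}$ glue along the new arrow to one witnessing $\bT$–fixedness of $[\tilde p]$. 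The identity $\tb_i|_p=\iota^{*}\tb^{(r)}_i|_{p^{(r)}}$ holds as vector spaces by construction; as $\bT$–representations it holds on the $Q^{(1)}$–side because the action factors through the first projection of $\iota$, and on the $Q^{(2)}$–side precisely because $\iota$ was built so that the $u_l$ pull back to the characters $\chi_l$ contributed by $V^{(1)}_{\sspt_1}$ in place of the old framing.

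Finally, to see $\ssmap_{p^{(1)}}$ is an embedding I would argue injectivity by reconstruction: from $[\tilde p]$ one recovers the $Q^{(2)}$–representation by restricting to $\{V_k\}_{k\in Q^{(2)}_0}$ with its $Q^{(2)}$–maps and recovering the framing at $\sspt_2$ from the new arrow via the fixed identification $V_{\sspt_1}\cong W^{(2)}_{\sspt_2}$, available after putting $V_{\sspt_1}$ in split form (which the split hypothesis guarantees up to the chosen ordering). This reconstruction inverts the gluing on the $Q^{(2)}$–factor, so distinct fixed points have distinct images; equivalently, the preserved characters $\iota^{*}\tb^{(2)}_k|_{p^{(2)}}$ already separate the isolated fixed points of $\qv^{(2)}$. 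The only genuine subtleties beyond bookkeeping are the vanishing of the $\sspt_1$ moment-map term and the compatibility of the two gauge transformations enforcing $\bT$–fixedness; both reduce to the explicit description of torus-fixed points together with the split hypothesis, which is where I would concentrate the effort.
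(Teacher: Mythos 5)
Your construction is the same as the paper's: glue stable representatives along an identification $V^{(1)}_{\sspt_1}\cong W^{(2)}_{\sspt_2}$ furnished by the split hypothesis and a chosen ordering, route the discarded framing maps through the new arrow, kill the new $\sspt_1$ moment-map term using the structure of fixed points, and build $\iota$ by feeding the $\bT^{(1)}$-weights of $V_{\sspt_1}$ into the missing $\bA^{(2)}_{\sspt_2}$-directions. Two spots need more care. First, your stability check argues by generation from the framings, which is the criterion only for $\theta>0$; the theorem also allows $\theta^{(2)}<0$, where at a fixed point it is $I^{(2)}_{\sspt_2}$ (not $J^{(2)}_{\sspt_2}$) that vanishes and the relevant criterion is the cogeneration/kernel condition of Proposition \ref{prop: rep-theory-stability}. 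The paper's Lemma \ref{lem: preserve stability} handles both signs (and an arbitrary generic $\theta^{(1)}$) by running the subrepresentation criterion directly on the decomposition $S=S^{(1)}\oplus S^{(2)}$; you should either do the same or at least dualize your argument. Second, you fix a representative of $p^{(1)}$ and a weight basis of $V^{(1)}_{\sspt_1}$ at the outset but only verify independence of the representative of $p^{(2)}$; a weight basis of a one-dimensional weight space is only defined up to scalar, and changing the representative of $p^{(1)}$ by an element of the centralizer $Z_{G_{\dv^{(1)}}}(\rho(\bT^{(1)}))$ rescales the identification $\phi$ by a diagonal matrix. The paper closes this by observing that such a diagonal change equals the action of an element of $\bA^{(2)}_{\sspt_2}\subset\bT^{(2)}$ on $W^{(2)}_{\sspt_2}$, which is absorbed by a gauge transformation since $p^{(2)}$ is $\bT^{(2)}$-fixed; your "gluing of gauge transformations" remark contains the right mechanism but should be stated as an explicit invariance check. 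With those two points supplied, the argument matches the paper's.
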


In the setting of the previous proposition, we will also denote $p^{(1)} \slantsum p^{(2)}:= \ssmap_{p^{(1)}}(p^{(2)})$. Theorem \ref{thm: slant sum and fixed points intro} generalizes the classical slant sum of heaps studied in \cite{proctor}, see also \cite{heaps}.



\subsection{Slant sums and vertex functions}

Our constructions also relate to certain quasimap counts. In the $K$-theoretic enumerative geometry of quiver varieties, the \emph{descendant vertex function} is a key object \cite{pcmilect}. It appears in Okounkov's enumerative 3d mirror symmetry \cite{AOElliptic,Okounkov_video}, see also \cite{bottadink,dinkms2,dinkms1}. It is defined as the generating function of equivariant counts of quasimaps from $\mathbb{P}^{1}$ to $\qv$ with a nonsingular condition at $\infty \in \mathbb{P}^{1}$. We denote it by 
\[
\ver^{(\tau)}_{\qv}(z) \in K_{\bT \times \mathbb{C}^{\times}_{q}}(\qv)[[z]]
\]
Here, $[[z]]$ stands for power series in a certain cone in variables $z_{i}$ for $i \in Q_{0}$. The descendant $\tau$ is an element of $K_{ G_{\dv}}(\mathrm{pt})$ where $G_{\dv}$ is the gauge group. For a fixed point $p \in \qv^{\bT}$, we will denote $\ver^{(\tau)}_{p}:=\ver^{(\tau)}_{p}(z):=\ver^{(\tau)}_{\qv}(z)|_{p}$.

In the setting of Theorem \ref{thm: slant sum and fixed points intro}, there is a map between the spaces of descendants: 
\begin{equation}\label{eq: descendant map}
\iota^{*}:K_{\bT^{(1)} \times G_{\dv^{(1)}}}(\mathrm{pt})\otimes K_{\bT^{(2)} \times G_{\dv^{(2)}}}(\mathrm{pt}) \to K_{\bT \times G_{\dv}}(\mathrm{pt}),
\end{equation}
and it is natural to wonder if there is any relationship between $\ver^{(\tau)}_{p}$, $\ver^{(\tau_{1})}_{p^{(1)}}$, and $\ver^{(\tau_2)}_{p^{(2)}}$ when $\tau=\iota^{*}(\tau_1 \otimes \tau_2)$. This is the content of our next theorem.

To state it, let $\qm_{p^{(1)}}$ denote the moduli space of stable quasimaps from $\mathbb{P}^{1}$ to $\qv^{(1)}$ which send $\infty$ to $p^{(1)}$. There is an evaluation morphism $\ev_{0}:\qm_{p^{(1)}} \to K_{G_{\dv^{(1)}}}(\mathrm{pt})$. For a cocharacter $\sigma: \Cs \to \bA^{(2)} \subset \bT^{(2)}$ of the framing torus, let $\ver^{(\tau_2),\sigma}_{p^{(2)}}$ be the $\sigma$-twisted vertex function.

\begin{theorem}[Theorem \ref{thm: vertex and slant sum 1}]\label{thm: vertex and slant sums intro}
    Under the hypotheses of Theorem \ref{thm: slant sum and fixed points intro}, let $\tau=\iota^{*}(\tau_1 \otimes \tau_2) \in K_{\bT \times G_{\dv}}(\mathrm{pt})$. Then
    \[
\ver^{(\tau)}_{p}(z_1,z_2)=\sum_{F} \chi\left(F, \frac{(\ev_{0}^{*}(\tau_{1}) \otimes \vrs)|_{F}}{\extpow(N_{\vir}|_{F}^{\vee})}\right) z_1^{\deg F} \iota^{*} \ver^{(\tau_{2}),\sigma_{F}}_{p^{(2)}}(z_2) 
    \]
    where the sum runs over $\bT^{(1)} \times \Cs_{q}$-fixed components $F$ of $\qm_{p^{(1)}}$.
\end{theorem}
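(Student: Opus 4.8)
The plan is to exhibit the quasimap space for $\qv$ as a relative quasimap space for $\qv^{(2)}$ over the quasimap space for $\qv^{(1)}$, and then to compute by virtual localization on the $\qv^{(1)}$ factor. Concretely, since $Q = Q^{(1)}\,{}_{\sspt_1}\slantsum_{\sspt_2}\,Q^{(2)}$ is obtained from $Q^{(1)} \sqcup Q^{(2)}$ by adding the single arrow $\sspt_1 \to \sspt_2$ and deleting the framing at $\sspt_2$, a stable quasimap $\mathbb{P}^1 \dashrightarrow \qv$ with nonsingular value $p = p^{(1)}\slantsum p^{(2)}$ at $\infty$ is the same data as (i) a stable quasimap $f^{(1)}: \mathbb{P}^1 \dashrightarrow \qv^{(1)}$ with $f^{(1)}(\infty) = p^{(1)}$, together with (ii) a stable quasimap to $\qv^{(2)}$ with nonsingular value $p^{(2)}$ at $\infty$ in which the constant $\sspt_2$-framing bundle is replaced by the tautological bundle $\tb^{(1)}_{\sspt_1}$ of $f^{(1)}$. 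This presents $\qm_p$ as the total space of a morphism $\pi: \qm_p \to \qm_{p^{(1)}}$ whose fibers are framing-twisted $\qv^{(2)}$-quasimap spaces. The hypothesis that $p^{(1)}$ is split over $\sspt_1$ (Definition \ref{def: split fixed points}) is what guarantees that stability for $\qv$ separates into $\qv^{(1)}$-stability on the base and $\qv^{(2)}$-stability on the fibers, so that $\pi$ is well defined.

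Because the deformation-obstruction theory of a quiver quasimap is assembled arrow by arrow and the only edge joining the two constituents is $\sspt_1 \to \sspt_2$, the virtual tangent complex splits as
\[
\vT(\qm_p) \;=\; \vT(\qm_{p^{(1)}}) \,\oplus\, \vT^{\mathrm{rel}}(\pi),
\]
where the relative summand is the virtual tangent complex of the $\qv^{(2)}$-quasimap problem with every occurrence of the $\sspt_2$-framing replaced by $\tb^{(1)}_{\sspt_1}$. Since $G_{\dv} = G_{\dv^{(1)}} \times G_{\dv^{(2)}}$ and $\tau = \iota^*(\tau_1 \otimes \tau_2)$, the descendant insertion factors as $\ev_0^*(\tau_1)$ on the base and $\tau_2$ on the fibers; likewise the symmetrized virtual structure sheaf $\vrs$ factors compatibly with the above splitting, and the degree of a quasimap for $Q$ splits as (degree for $Q^{(1)}$) $+$ (degree for $Q^{(2)}$), tracked by $z_1$ and $z_2$ respectively.

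Applying $\bT^{(1)} \times \Cs_q$-virtual localization to the base $\qm_{p^{(1)}}$ and pushing forward, the contribution of a fixed component $F$ is precisely its localized contribution to the ordinary $\qv^{(1)}$-vertex, namely $\chi\left( F, \frac{(\ev_0^*(\tau_1) \otimes \vrs)|_F}{\extpow(N_{\vir}|_F^\vee)} \right) z_1^{\deg F}$, multiplied by the fiber integral over the relative $\qv^{(2)}$-quasimap space. On $F$ the quasimap $f^{(1)}$ is torus-fixed, so $\tb^{(1)}_{\sspt_1}|_F$ decomposes as a direct sum of $\bT^{(1)} \times \Cs_q$-equivariant line bundles on $\mathbb{P}^1$; its rank equals $\dv^{(1)}_{\sspt_1} = \dw^{(2)}_{\sspt_2}$, exactly the rank of the $\sspt_2$-framing. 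The $\bT^{(1)}$-weights of these summands are identified, via $\iota$, with the weights of the framing torus $\bA^{(2)}$, while their $\mathbb{P}^1$-degrees define the cocharacter $\sigma_F : \Cs \to \bA^{(2)}$ (all of these data being constant on the connected component $F$). Replacing the constant $\sspt_2$-framing by the sum of these line bundles is by definition the $\sigma_F$-twist of the framing, so the fiber integral equals $\iota^* \ver^{(\tau_2),\sigma_F}_{p^{(2)}}(z_2)$. Summing over $F$ yields the stated formula.

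The main obstacle is the clean matching of virtual structures in the first two steps, rather than the localization bookkeeping in the third. I expect the delicate points to be: (a) that the symmetrized structure sheaf $\vrs$ --- which involves a square root of the virtual canonical bundle and a $q$-prefactor --- really splits as claimed, so that the new arrow $\sspt_1 \to \sspt_2$ is distributed consistently between the square roots on the two sides with no residual fixed-locus-independent factor left behind; and (b) the stability comparison underlying $\pi$, together with the chamber-dependence flagged in the footnote to Theorem \ref{thm: slant sum and fixed points intro}, which is what makes $\sigma_F$ a genuine cocharacter and certifies that the fiberwise problem is honestly the $\sigma_F$-twisted $\qv^{(2)}$-vertex. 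Once these compatibilities are in place, the computation reduces to assembling the localization contributions as above.
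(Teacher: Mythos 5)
Your computation at the level of fixed loci is the same one the paper performs: the paper's proof is exactly ``decompose the $\bT\times\Cs_q$-fixed locus of $\qm_p$ as $\bigsqcup_F F\times(\qm^{\sigma_F}_{p^{(2)}})^{\bT^{(2)}\times\Cs_q}$, match virtual tangent spaces, and apply virtual localization'' (this is Theorem \ref{thm: qm and slant sums}, from which Theorem \ref{thm: vertex and slant sum 1} follows in one line). Your identification of the delicate points --- the splitting of $\vrs$ including the square root (handled in the paper by using the canonical polarization, so that matching virtual tangent spaces forces matching of the symmetrized sheaves) and the role of the chamber $\chamb$ in making $\sigma_F$ well-defined --- is accurate and corresponds to what the paper actually checks.

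However, the scaffolding you build before localizing contains a genuine gap. You assert that a stable quasimap to $\qv$ based at $p$ \emph{is the same data as} a stable quasimap to $\qv^{(1)}$ based at $p^{(1)}$ together with a framing-twisted stable quasimap to $\qv^{(2)}$, and hence that there is a global morphism $\pi:\qm_p\to\qm_{p^{(1)}}$. This is not established and is doubtful: $\theta$-stability of a representation of the slant-sum quiver $Q$ does not in general imply $\theta^{(1)}$-stability of its restriction to $Q^{(1)}$ (the paper's Lemma \ref{lem: preserve stability} proves only the opposite implication, that stability of the two constituents implies stability of the slant sum, and even that requires hypotheses on $\theta^{(2)}$ and the vanishing of $I^{(2)}_{\sspt_2}$ or $J^{(2)}_{\sspt_2}$). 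Consequently ``virtual localization on the base of $\pi$'' is not an available operation as you state it; moreover $\bT^{(1)}\times\Cs_q$ does not act on $\qm_p$ --- only $\bT\times\Cs_q$ does, via $\iota$. The paper avoids all of this by never leaving the fixed locus: for a $\bT\times\Cs_q$-fixed based quasimap, $\Cs_q$-fixedness forces the quasimap to evaluate to the stable point $p$ away from $0\in\mathbb{P}^1$, which is what makes the restriction of the section to the $Q^{(1)}$-components automatically a stable based quasimap to $\qv^{(1)}$. Your argument becomes correct, and coincides with the paper's, once the fibration claim is replaced by this fixed-locus decomposition and the localization is taken with respect to $\bT\times\Cs_q$ on $\qm_p$ itself.
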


We prove this theorem by using equivariant localization, relating torus fixed quasimaps to $\qv$ with torus fixed quasimaps to $\qv^{(1)}$ and (twisted) quasimaps to $\qv^{(2)}$. Theorem \ref{thm: vertex and slant sums intro} can be thought of as a ``branching rule" for vertex functions, describing a vertex function in terms of vertex functions for smaller quivers.

We obtain several corollaries in Section \ref{sec: vertex and slant sums} in which Theorem \ref{thm: vertex and slant sum 1} can be made more explicit. 

\begin{corollary}[Corollary \ref{cor: vertex slant sum factorization}]
    If $\qv^{(2)}=p^{(2)}$ and $\tau_2=1$, or more generally under the assumptions of Corollary \ref{cor: vertex slant sum factorization}, we have
    \[
\ver^{(\tau_1\otimes \tau_{2})}_{p}(z_1,z_2)=\ver^{(\tau_{1})}_{p^{(1)}}(z_{1}') \ver^{(\tau_2)}_{p^{(2)}}(z_2)
    \]
    where $z_1'$ stands for a shift of $z_1$ by some powers of $z_2$.
\end{corollary}

Let $\hbar$ be the weight of the symplectic form on $\qv$ and let $q$ be the equivariant parameters for the action of $\Cs_{q}$ on $\mathbb{P}^{1}$. Under the \emph{Calabi-Yau} specialization $\hbar=q$, Theorem \ref{thm: vertex and slant sums intro} significantly simplifies.

\begin{corollary}[Corollary \ref{cor: CY vertex slant sum factorization}]\label{cor: CY vertex slant sum factorization intro}
    Under the assumptions of Corollary \ref{cor: CY vertex slant sum factorization}
      \[
\ver^{(\tau_1 \otimes \tau_{2})}_{p}(z_1,z_2)\big|_{q=\hbar}=\ver^{(\tau_{1})}_{p^{(1)}}(z_1')\big|_{q=\hbar} \ver^{(\tau_{2})}_{p^{(2)}}(z_2)\big|_{q=\hbar}
    \]
\end{corollary}

Theorem \ref{thm: vertex and slant sums intro} also subsumes an additional formula present in the literature, the branching rule for the ``nonstationary Ruijsenaars function", studied, for example, in \cite{BFS, LMS,NSmac,tamagninonabelian}. We explain the details in Section \ref{sec: ruijsenaars branching}.

\subsection{Factorization for limits of vertex functions}

Theorem \ref{thm: vertex and slant sums intro} was motivated by 3d mirror symmetry, a proposed duality between Higgs and Coulomb branches \cite{HW, IntSei,kamnitzersurvey, WebsterYoo}. Let $\bA \subset \bT$ be the subtorus preserving the symplectic form and let $\nu: \Cs \to \bA$. The cocharacter $\nu$ defines a partial resolution $\qv^{!}$ of the corresponding Coulomb branch \cite{BFNII}. The GIT stability parameter $\theta$ used in the definition of $\qv$ gives a cocharacter of a torus acting on $\qv^{!}$. 3d mirror symmetry predicts that isolated points in $\qv^{\nu(\Cs)}$ are in bijection with nonsingular points in $(\qv^{!})^{\theta(\Cs)}$.

A conjecture of Smirnov and the first author \cite{dinksmir} claims that $\lim_{a \to 0} \nu^{*}(\ver_{p})$, where $a$ is the coordinate on $\Cs$, is a product of $q$-binomial series, one for each repelling weight of $T_{p^{!}} \qv^{!}$. This implies a similar statement for the more extreme Calabi-Yau specialization $\ver_{p}|_{q=\hbar}$, in which $q$-binomials are replaced by ordinary binomials. Both of these are degenerations of a stronger statement, proven for type $A$ bow varieties in \cite{bottadink}, relating vertex functions of $\qv$ with those of $\qv^{!}$. Conveniently, the degenerations can be checked even when there is no proposed definition of the vertex function of $\qv^{!}$, which is currently the case outside in type $A$.


This conjecture reduces to the case where $\qv$ is a single point as follows. An isolated fixed point $p \in \qv^{\nu(\Cs)}$ can be viewed as the Higgs branch of its own quiver gauge theory, in which case $\lim_{a \to 0} \nu^{*}(\ver_{p})$ is equal to the vertex function of this zero-dimensional Higgs branch. Assuming a conjecture of Hilburn \cite{KPW}, Corollary \ref{cor: iso of tangent spaces} explains the similar reduction for Coulomb branches, which we verify for finite-type Dynkin quivers at the level of tangent space characters in \eqref{eq: tangent resolved slice} (generalizing \cite[Lemma 4.4]{KamnitzerTingleyWebsterWeeksYacobi} and \cite[Equation (5.8)]{KP}).

\subsection{Factorization for 0-dimensional vertex functions}


Now fix a quiver $Q$ without edge loops and a zero-dimensional quiver variety $\qv:=\qv_{Q,\theta}(\dv,\dw)$ where the stabilty parameter is $\theta=(1,1,\ldots,1)$. We define coweights
\begin{equation}\label{eq: lambda and mu general}
\lambda:=\sum_{i \in Q_{0}} \dw_{i} \fundwt_{i}^{\vee}, \quad \mu:=\lambda-\sum_{i \in Q_{0}} \dv_{i} \alpha_{i}^{\vee}
\end{equation}
of the Kac-Moody Lie algebra $\mathfrak{g}_Q$ associated to the quiver $Q$. Here, $\fundwt_{i}^{\vee}$ and $\alpha_{i}^{\vee}$ denote the fundamental coweights and simple coroots, respectively. Let $\Phi^{\pm, \text{re}}$ be the set of real positive or negative roots. Let $\Phi^{-,\text{re}}_{\mu}=\{\alpha \in \Phi^{-,\text{re}} \, \mid \, \langle \alpha,\mu\rangle>0 \}$. It is a standard fact, see Lemma \ref{lem: nakajima quiver point ADE}, that $\dim \qv =0 \iff \mu \in W \lambda$.

The vertex function depends on a choice of polarization, and we choose the canonical polarization\footnote{That is determined by the choice of orientation of the edges in the underlying graph of the quiver. Equivalently, we choose the polarization induced by the zero section of the cotangent bundle appearing in the symplectic reduction defining $\qv$.}. Then we define 
\begin{equation}\label{eq: msver}
\msver_{\qv}(z)=\ver_{\qv}(z)_{z_i=z_{i}(-\hbar^{-1/2})^{a_{i}}}, \quad \text{where} \quad  a_{i}=\sum_{\substack{e \\  h(e)=i}} \dv_{t(e)} -\sum_{\substack{e \\ t(e)=i}} \dv_{h(e)} + \dw_{i}
\end{equation}

Let $A_{Q}=(a_{i,j})_{i,j \in Q_{0}}$ be the adjacency matrix of Q, i.e., $a_{i,j}$ is the number of arrows from vertex $i$ to vertex $j$. We identify the K\"ahler parameters with formal exponentials of simple roots via $e^{\alpha_i}=(\hbar/q)^{(\dv-A_{Q} \cdot \dv )_{i}}z_{i}$. More explicitly,
\[
e^{\alpha_{i}}=z_{i} (\hbar/q)^{b_{i}}, \quad b_{i}=\dv_{i}-\sum_{\substack{e \\ t(e)=i}} \dv_{h(e)}
\]
Let $\Phi(x):=\prod_{i=0}^{\infty}(1-x q^{i})$. As discussed in \cite{AOElliptic} and \cite{bottadink}, the normalization of the vertex $\msver_{\qv}(z^{-1})$ is the most natural for 3d mirror symmetry.

\begin{conjecture}\label{conj: 0 dimensional vertex}
The vertex function of $\qv$ factorizes
\begin{equation}\label{eq: conj zero dim vertex}
\msver_{\qv}(z^{-1})=\prod_{\substack{\alpha \in \Phi^{-,\text{re}}_{\mu}\\}}\prod_{i=1}^{\langle\alpha,\mu\rangle} \frac{\Phi\left(\hbar \left(\frac{\hbar}{q}\right)^{i-1} e^{\alpha}\right)}{\Phi\left(\left(\frac{\hbar}{q}\right)^{i-1} e^{\alpha}\right)}
\end{equation}
   
\end{conjecture}

In type $A$, this conjecture follows from \cite{bottadink}, though more direct proofs are available in special cases, see \cite{dinkms1} and \cite{dinksmir2}. It was also proven by Jang and the first author for type $D$ quiver varieties with a single minuscule framing \cite{dinkjang}.

In \S\ref{sec: 0 dim varieties}, and specifically in Proposition \ref{prop: inductive factorization}, we demonstrate how Theorem \ref{thm: vertex and slant sums intro} can be used to inductively prove instances of Conjecture \ref{conj: 0 dimensional vertex}.

\begin{remark}
    While this paper was in the final stages of preparation, we learned about a work in-progress by H. Nakajima, A. Okounkov, and Z. Zhou which contains a proof of Conjecture \ref{conj: 0 dimensional vertex} for ADE quiver varieties \cite{ONZ}. The slant sum of two Dynkin quivers is almost never a Dynkin quiver; so the main interest of the techniques developed here lie outside ADE types.
\end{remark}

\subsection{Duals to zero-dimensional quiver varieties}

Recall that $\qv$ is a single point if and only if $\mu \in W \lambda$. Combining \eqref{eq: conj zero dim vertex} with Conjecture 1 from \cite{dinksmir}, and recalling that mirror symmetry identifies $\hbar^{!}=q/\hbar$, we obtain the following:

\begin{conjecture}\label{conj: dual tangent space}
If $\mu \in W \lambda$, then the unique torus fixed point $p^{!} \in \qv^{!}$ is nonsingular, and the character of the tangent space of $\qv^{!}$ at $p^{!}$ is
   \begin{equation}\label{eq: intro tangent formula}
\sum_{\alpha \in \Phi^{-,\mathrm{re}}_\mu} \sum_{i=1}^{\langle \alpha,\mu\rangle} (\hbar^!)^{1-i}e^{\alpha}+(\hbar^!)^ie^{-\alpha}.
\end{equation}
\end{conjecture}

We provide some numerical evidence for Conjecture \ref{conj: dual tangent space} in Lemma \ref{lem: conj 1.4 at the level of dimensions}.

By \cite{NW}, the Coulomb branch is well-defined for quivers encoding {\emph{symmetrizable}} Cartan matrices. We thus see that Conjecture~\ref{conj: dual tangent space} makes sense more generally for any quiver $Q$ such that $\mathfrak{g}_Q$ is symmetrizable. Here it becomes crucial that we treat $\mu$ as a coweight (instead of a weight as is often the case) and the $\alpha$ as roots in (\ref{eq: intro tangent formula}), equipped with the natural pairing between them.

Using the identification of Coulomb branches for finite Dynkin quivers with affine Grassmannian slices, we will prove in Section~\ref{subsec: proof for ADE} the following.

\begin{proposition}[Proposition \ref{prop: tangent for ade}]\label{prop_fin_Dyn_tangent}
    Conjecture \ref{conj: dual tangent space} holds for finite type Dynkin quivers.
\end{proposition}
Our argument for the proof of Proposition \ref{prop_fin_Dyn_tangent} is obtained by combining the argument similar to the one in \cite[Lemma 4.4]{KamnitzerTingleyWebsterWeeksYacobi} with \cite[Theorem 3.1(1)]{Kr}.

\subsection{Modules over quantized Coulomb branches}
Assume that Conjecture~\ref{conj: dual tangent space} holds.
In Section~\ref{section: character of simple}, we observe that the category $\mathcal{O}$ for a quantization of $\mathcal{M}^!$ contains a \emph{unique} irreducible object, and we show that its character can be read off from the explicit formula for the tangent space to the Coulomb branch at a fixed point (see Proposition~\ref{prop: char irreducible}). Our main technical tool is the $D$-module of graded traces introduced in \cite{KMP} and further studied in \cite{DKK}.

Restricting to finite-type Dynkin quivers, and using that Conjecture~\ref{conj: dual tangent space} holds in this case (see Proposition~\ref{prop_fin_Dyn_tangent} above), we obtain explicit formulas for the characters of so-called  ``extremal'' (terminology is due to \cite{min_chamber_mod}) irreducible modules $L$ over shifted Yangians $Y_\mu$. Namely, we get (see Proposition \ref{prop: char irreducible}):
\begin{equation}\label{eq:norm_char_form_intro}
\widetilde{\operatorname{ch}}(L) = \prod_{\alpha \in \Phi_\mu^{-}}\frac{1}{(1-e^\alpha)^{\langle \alpha,\mu\rangle}},
\end{equation}
where $\widetilde{\operatorname{ch}}(L)$ is the {\emph{normalized}} character of $L$.

Using \cite[Corollary 1.2.1]{VV}, these formulas may be translated to the case of shifted quantum groups. A closely related result was obtained by Negut by completely different methods (see \cite[Theorem 1.3]{Negut_char}). The authors of the forthcoming paper \cite{min_chamber_mod} also obtain the same formula for the extremal $Y_\mu$-modules; their argument combines \cite[Theorem 3.1(1)]{Kr} with the theory developed in \cite{catO_coulomb}.

Finally, the character formula (\ref{eq:norm_char_form_intro}) we obtain admits a natural one-parameter deformation coming from the parameter $\hbar^!$ in (\ref{eq: intro tangent formula}) above. This refined character agrees with that in \cite[Theorem 1.3]{Negut_char}. For us, the parameter $\hbar^!$ corresponds to the $\mathbb{C}^\times$-action on $\mathcal{M}^!$ induced by the homological grading on $\mathbb{C}[\mathcal{M}^!]$.



\subsection{Connection with quantum Hikita conjecture}


Corollary \ref{cor: CY vertex slant sum factorization intro} is related to another manifestation of 3d mirror symmetry: the quantum Hikita conjecture proposed in \cite{KMP}. This conjecture predicts an isomorphism between the $q=\hbar$ specialized quantum $D$-module for the Higgs branch and the $D$-module of graded traces for the Coulomb branch.  

A proof of this conjecture was given for ADE quiver varieties with minuscule framings in \cite{DKK}. The technical heart of the proof is to identify the $q=\hbar$ specialized descendant vertex functions with the graded traces of elements of the quantized Coulomb branch--in this case a \emph{truncated shifted Yangian}--over a Verma module. Thus Corollary \ref{cor: CY vertex slant sum factorization intro} provides many cases of the Higgs side of the computation needed to prove the quantum Hikita for more general quiver varieties. In many examples, see Section \ref{sec: 0 dim varieties}, vertex functions can be computed in terms of reverse plane partitions over some poset. Theorem \ref{thm: vertex and slant sums intro} shows that this is true for $\ver^{(\tau)}_{p}$ if it is true for $\ver^{(\tau_1)}_{p^{(1)}}$ and $\ver^{(\tau_2)}_{p^{(2)}}$. 

Furthermore, passing Corollary \ref{cor: CY vertex slant sum factorization intro}, or more generally the $q=\hbar$ specialization of Theorem \ref{thm: vertex and slant sums intro}, through 3d mirror symmetry gives a conjectural ``branching rule" for graded traces. To the best of the authors' knowledge, this formula, and its expression in terms of reverse plane partitions in special cases, is a new expectation.



\subsection{Slant sums of Coulomb branches}

In Section \ref{subsec:slant via Coulomb}, we initiate the study of the slant sum construction from the Coulomb branch perspective. We prove in Proposition \ref{prop_slant_sum_of_coulomb} that for $\mathsf{w}^{(2)}$ such that $\dw^{(2)}_{\star_2}=1$ and all other $\dw_i^{(2)}=0$, the slant sum on the Coulomb branch side corresponds to taking products. The Higgs branch counterpart of this statement was observed in 
\cite[Lemma 8.4]{sing_CM_varieties}
 (after applying the Crawley-Boevey trick). We note that the identification of a Coulomb branch $\mathcal{M}_{C}$ for the quiver $Q=Q^{(1)} {}_{\sspt_{1}}\slantsum_{\sspt_{2}} Q^{(2)}$ with the product $\mathcal{M}_C^{(1)} \times \mathcal{M}_C^{(2)}$ of Coulomb branches for $Q^{(1)}, Q^{(2)}$ is compatible with  natural structures (integrable system, torus action) after a certain twist.

\subsection{Outline}
In Section \ref{sec: quiver varieties}, we establish notation and review the facts we need about quiver varieties. We also clearly state the exact hypotheses needed in Theorem \ref{thm: slant sum and fixed points intro} and prove this theorem. 

In Section \ref{sec: quasimaps}, we review vertex functions, prove Theorem \ref{thm: vertex and slant sums intro}, and discuss a number of factorization corollaries. 

In Section \ref{sec: applications}, we consider some examples which show how Theorem \ref{thm: vertex and slant sums intro} relates to Conjecture \ref{conj: 0 dimensional vertex}. We also explicitly spell out the connection between our results the branching of the non-stationary Ruijsenaars function of \cite{NSmac}. 

In Section \ref{subsec: Higgs and Coulomb branches}, we recall some facts about Higgs and Coulomb branches and their mirror symmetry. We make some conjectures needed for Conjecture \ref{conj: dual tangent space} to make sense. We also make some conjectures which reduce the character of tangent spaces at nonsingular fixed points for an arbitrary partial resolution of a Coulomb branch to the case of Conjecture \ref{conj: dual tangent space}.

In Section \ref{section: character of simple} we prove a general result describing the normalized character of irreducible modules over certain Coulomb branches in presence of Conjecture \ref{conj: dual tangent space} above. In particular, we obtain a refined
character formulas for so-called ``extremal'' irreducible modules over shifted Yangians. 

In Section \ref{subsec: proof for ADE}, we prove all of the conjectures above for finite type Dynkin quivers. 

Finally, in Section~\ref{subsec:slant via Coulomb}, we investigate slant sums from the perspective of Coulomb branches.



\subsection{Acknowledgements}

We are grateful to Jiwu Jang, Shrawan Kumar and Zijun Zhou for helpful conversations regarding this project. We are grateful to Ivan Losev and Dmytro Matvieievskyi for helpful discussions on the material of Section \ref{section: character of simple}.
We are grateful to Joel Kamnitzer,  Alexis Leroux-Lapierre, and Hiraku Nakajima for valuable
comments on an earlier version of this text.
H. Dinkins was supported by NSF grant DMS-2303286 at MIT and the NSF RTG grant Algebraic Geometry and Representation Theory at Northeastern University DMS–1645877. R. Lance was supported by NSF grant DMS-2200867. Vasily Krylov was supported by the Simons
Foundation Award 888988 as part of the Simons Collaboration on Global Categorical Symmetries.

\section{Slant sums of Higgs branches}\label{sec: quiver varieties}
    
    We briefly recall definitions, theorems, and fix conventions regarding quiver varieties. A more detailed introduction can be found in \cite{Nak1}. 
    
    A \textit{quiver}, $Q$, is a directed graph. It comes equipped with a finite vertex set, $Q_0$ and a set of directed edges $Q_1$. There are maps $h,t:Q_{1} \to Q_{0}$ sending an arrow to its head and tail. A \textit{representation} of the quiver $Q=(Q_0,Q_1)$ of dimension $\dv \in \mathbb{N}^{Q_{0}}$ is a collection of complex vector spaces $V_i$ with $\dim V_i= \dv_i$ for each $i \in Q_{0}$ and linear maps $x_e: V_{t(e)} \to V_{h(e)}$ for each arrow $e \in Q_{1}$.

Choose $\dv,\dw \in \mathbb{N}^{Q_{0}}$ and fix a pair of $Q_0$-graded vector spaces $V=\bigoplus_{i\in Q_0} V_i, W=\bigoplus_{i\in Q_0}W_i$ such that $\dv_i=\dim V_i$ and $\dw_i=\dim W_i$. Let
\begin{align*} {\bf{M}}:={\bf{M}}(\dv,\dw):=\Bigg(\bigoplus_{e \in Q_{1}}\Hom(V_{t(e)},V_{h(e)})\Bigg)&\oplus\Bigg(\bigoplus_{e \in Q_{1}}\Hom(V_{h(e)},V_{t(e)})\Bigg)\\
\oplus\Bigg(\bigoplus_{i\in Q_0}\Hom(W_i,V_i)\Bigg)&\oplus\Bigg(\bigoplus_{i\in Q_0}\Hom(V_i,W_i)\Bigg)
\end{align*}
which, by the trace pairing, may be identified with $T^*{\bf{N}}$, where
\begin{equation}\label{eq: def of N}
{\bf{N}}:={\bf{N}}(\dv,\dw) := \bigoplus_{e \in Q_{1}}\Hom(V_{t(e)},V_{h(e)})\oplus\bigoplus_{i\in Q_0}\Hom(V_i,W_i)
\end{equation}

We denote an element of ${\bf{M}}$ as $(X,Y,I,J)$, where 
\begin{align*}
X = \bigoplus_{e\in Q_1} X_e, \quad
Y = \bigoplus_{e\in Q_1} Y_e, \quad 
I = \bigoplus_{i \in Q_0}I_i, \quad 
J = \bigoplus_{i \in Q_0}J_i
\end{align*}


As the cotangent bundle to a smooth manifold, ${\bf{M}}$ comes with a canonical symplectic form.

${\bf{M}}$ has an action of $G_{\dv}:=\prod_{i\in Q_0}GL(V_i)$, sometimes referred to as the gauge group, induced via basis change at each vertex: 
\[
g.(X,Y,I,J) = (gXg^{-1}, gYg^{-1},gI,Jg^{-1})
\]
We denote $\text{Lie}(G_{\dv}):=\mathfrak{g}_{\dv}$.

The action of $G_{\dv}$ on ${\bf{M}}$ is Hamiltonian, and the moment map 
\[
\mu: {\bf{M}} \to \mathfrak{g}_{\dv}
\]
is given by 
\[
(X,Y,I,J) \mapsto \bigoplus_{i\in Q_0}\left(\sum_{\substack{e\in Q_1\\h(e)=i}}X_eY_e-\sum_{\substack{e\in Q_1\\t(e)=i}} Y_eX_e +I_iJ_i\right)
\]
where we have identified $\mathfrak{g}_{\dv}\cong\mathfrak{g}_{\dv}^{*}$ via the trace pairing.

A choice of stability parameter $\theta \in \mathbb Z^{Q_0}$ induces a character $\chi_\theta$ of $G_{\dv}$ by $\chi_\theta(g)=\prod_{i \in Q_{0}}(\det g_i)^{\theta_i}$.

\begin{definition}\label{qv}
The \emph{Nakajima quiver variety} associated to the data $Q$, $V_{i}$, $W_{i}$, and $\theta$ is the algebraic symplectic reduction
\[
\qv_{Q,\theta}(\dv,\dw)={\bf{M}} /\!\!/\!\!/_{\chi_{\theta}} G_{\dv}=\mu^{-1}(0)/\!\!/_{\chi_\theta} G_{\dv}
\]
\end{definition}

We may omit $Q,\theta$ when they are understood. By the general theory of GIT, $\qv_{Q,\theta}(\dv, \dw)$ is a quasiprojective variety and admits a projective morphism 
\[
\pi: \qv_{Q,\theta}(\dv,\dw)\to \qv_{Q,0}(\dv,\dw)
\]
to the affine categorical quotient $\qv_{Q,0}(\dv,\dw)=\mu^{-1}(0)/\!\!/ G_{\dv}$.

\begin{proposition}[\cite{NakALE}]\label{prop: generic theta}
Fix $Q$, $\dv$, and $\dw$. There is a finite set of hyperplanes in $\mathbb{Z}^{Q_{0}} \otimes_{\mathbb{Z}} \mathbb{R}$ such that if $\theta$ is not on a hyperplane, then $\qv_{Q,\theta}(\dv,\dw)$ is nonsingular, symplectic, and connected.
\end{proposition}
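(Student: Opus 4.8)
The plan is to realize $\qv_{Q,\theta}(\dv,\dw)$ as a GIT quotient and to extract all three properties (nonsingular, symplectic, connected) from the single genericity condition that semistability coincides with stability. I will treat the hyperplanes and the smoothness/symplectic statements as essentially formal consequences of GIT and Marsden--Weinstein reduction, and reserve the real work for connectedness.

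First I would set up the wall-and-chamber structure. By King's criterion, a point $(X,Y,I,J)\in\mu^{-1}(0)$ is $\chi_\theta$-semistable exactly when a slope inequality holds for every $(X,Y)$-invariant graded subspace compatible with the framing maps (it is convenient here to use Crawley--Boevey's device of an auxiliary one-dimensional framing vertex, which reduces framed stability to ordinary King stability on an enlarged quiver). A strictly semistable point, i.e.\ one that is semistable but not stable, can occur only when there is a proper invariant subspace of some dimension vector $\dv'$ with $0<\dv'<\dv$ for which $\theta\cdot\dv'=0$. Since $\dv'$ ranges over the finite set of lattice points with $0\le\dv'\le\dv$, the locus of $\theta$ admitting strictly semistable points lies in the finite union of hyperplanes $H_{\dv'}:=\{\theta\in\mathbb{Z}^{Q_0}\otimes_{\mathbb Z}\mathbb{R}\,\mid\,\theta\cdot\dv'=0\}$. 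I declare $\theta$ generic if it avoids every $H_{\dv'}$; this is the required finite arrangement.

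Next I would deduce smoothness and the symplectic form. For generic $\theta$ there are no strictly semistable points, so $\mu^{-1}(0)^{ss}=\mu^{-1}(0)^{s}$ and the quotient map is a geometric quotient. At a stable point the stabilizer in $G_{\dv}$ is trivial, so $G_{\dv}$ acts freely on $\mu^{-1}(0)^{s}$; by the standard moment-map duality $\mathrm{im}(d\mu_x)$ is the annihilator of $\mathrm{Lie}\,\mathrm{Stab}(x)=0$, i.e.\ $0$ is a regular value of $\mu$ along the stable locus. Hence $\mu^{-1}(0)^{s}$ is smooth and $\qv_{Q,\theta}$ is smooth of the expected dimension $\dim{\bf M}-2\dim G_{\dv}$. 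The algebraic symplectic form on ${\bf M}=T^{*}{\bf N}$ descends: at each point $T_{[x]}\qv=\ker(d\mu_x)/T_x(G_{\dv}\cdot x)$, and freeness together with the regular-value property makes the reduced form nondegenerate, so $\qv_{Q,\theta}$ is symplectic.

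The main obstacle is connectedness, which is not formal; here I would follow Nakajima and exploit the hyperk\"ahler structure on ${\bf M}$. Writing $\mu_{\mathbb R}$ for the real moment map and identifying the complex moment map with $\mu$ above, the GIT quotient $\mu^{-1}(0)/\!\!/_{\chi_\theta}G_{\dv}$ is diffeomorphic to the hyperk\"ahler quotient at real level $\theta$ and complex level $0$. Rotating the complex structure carries the level datum $(\theta,0)$ to $(0,\zeta)$ for a generic central complex value $\zeta$, producing a hyperk\"ahler isometry, hence a diffeomorphism, between $\qv_{Q,\theta}(\dv,\dw)$ and the affine quotient $\mu^{-1}(\zeta)/\!\!/G_{\dv}$. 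For generic $\zeta$ every point is stable regardless of $\theta$, and the fiber $\mu^{-1}(\zeta)$ is irreducible of the expected dimension, so its quotient is connected; connectedness then transfers back across the diffeomorphism. The delicate input is precisely the irreducibility of the generic complex moment-map fiber, and I would cite Crawley--Boevey's analysis of the moment map for quiver representations to supply it, combining that with the rotation to conclude.
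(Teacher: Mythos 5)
The paper offers no proof of this proposition---it is imported wholesale from \cite{NakALE} (with connectedness for general quivers ultimately resting on Crawley--Boevey's irreducibility theorem for moment-map fibres)---so the only comparison available is with that cited literature, and your sketch follows exactly the standard route found there: King's criterion giving the finite wall arrangement $\{\theta\cdot\dv'=0\}$, freeness of the action on the stable locus plus Marsden--Weinstein reduction for smoothness and the symplectic form, and hyperk\"ahler rotation to a generic complex moment-map level combined with irreducibility of that fibre for connectedness. The outline is correct, and you have rightly localized the one non-formal step (irreducibility of $\mu^{-1}(\zeta)$, applied to the Crawley--Boevey enlarged quiver with dimension vector $(\dv,1)$, where genericity of $\theta$ as a real stability parameter is what guarantees genericity of $\zeta$ as a complex level) as the input to be cited rather than proved.
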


We will call stability parameters as in the previous proposition as ``generic". For this paper it suffices to know that $\theta=\pm(1,\dots,1)$ is always generic. 

Furthermore, GIT provides a notion of $\theta$ (semi)stable points. It is known that for generic $\theta$, stability and semistability are equivalent. Thus the closed points of $\qv_{Q,\theta}(\dv,\dw)$ are in bijection with orbits $\mu^{-1}(0)^{\theta-s}/G_{\dv}$. The superscript here refers to the intersection of $\mu^{-1}(0)$ with the locus of $\theta$-stable points. The following proposition gives a criteria for stability.

\begin{proposition}[\cite{Nak1}]\label{prop: rep-theory-stability}
Fix $\theta\in \mathbb{Z}^{Q_0}$. A representation $V=(X,Y,I,J)\in {\bf{M}}$ is \emph{$\theta$-semistable} if for any proper nonzero subrepresentation, $V'$, we have
\begin{align*}
 (V'\subset \ker J) &\Rightarrow \theta\cdot \dim V'\geq0\\
 (V'\supset \im I) &\Rightarrow \theta\cdot \dim V' \geq \theta \cdot \dim V
\end{align*}
and \emph{$\theta$-stable} if both implications hold with strict inequalities. 
\end{proposition}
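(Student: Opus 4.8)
The plan is to deduce the criterion from the Hilbert--Mumford numerical criterion for GIT (semi)stability with respect to the linearization $\chi_\theta$, following King's strategy for quiver representations. The starting point is the bijection between cocharacters $\lambda\colon \Cs \to G_{\dv}$ and $\mathbb{Z}$-gradings $V_i=\bigoplus_{n}V_i(n)$, where $\lambda(t)$ acts on $V_i(n)$ by $t^n$; passing to the associated decreasing filtration $V_i^{\geq n}:=\bigoplus_{m\geq n}V_i(m)$ records the same data as a finite filtration of $V$. With the standard sign convention, Hilbert--Mumford says $(X,Y,I,J)$ is $\chi_\theta$-semistable if and only if $\langle \chi_\theta,\lambda\rangle \geq 0$ for every $\lambda$ such that $\lim_{t\to 0}\lambda(t)\cdot(X,Y,I,J)$ exists, and $\theta$-stable if the inequality is strict for every nontrivial such $\lambda$.

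First I would analyze exactly which cocharacters admit a limit. On the block $V_{t(e)}(m)\to V_{h(e)}(n)$ of $\Hom(V_{t(e)},V_{h(e)})$ the $\lambda$-weight is $n-m$, so the limit at $t\to 0$ exists iff every nonzero block has $n\geq m$, i.e. $X_e$ preserves the filtration; the same analysis for the $Y_e$ blocks shows $Y_e$ preserves it as well, so each $V^{\geq n}$ is a subrepresentation. Since the framing spaces $W_i$ carry $\lambda$-weight $0$, the block of $I_i\colon W_i\to V_i$ into $V_i(n)$ has weight $n$, forcing $\im I\subseteq V^{\geq 0}$, and dually the block of $J_i\colon V_i\to W_i$ out of $V_i(n)$ has weight $-n$, forcing $V^{\geq 1}\subseteq \ker J$. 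Thus admissible cocharacters correspond precisely to finite filtrations of $V$ by subrepresentations with $\im I\subseteq V^{\geq 0}$ and $V^{\geq 1}\subseteq\ker J$.

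Next I compute $\langle\chi_\theta,\lambda\rangle=\sum_n n\,(\theta\cdot\dim V(n))$ and rewrite it by Abel summation as $\sum_{n\geq 1}\theta\cdot\dim V^{\geq n}+\sum_{n\leq 0}\bigl(\theta\cdot\dim V^{\geq n}-\theta\cdot\dim V\bigr)$, a finite sum since the filtration stabilizes at $0$ and $V$. For each $n\geq 1$ one has $V^{\geq n}\subseteq V^{\geq 1}\subseteq\ker J$, so the first family of hypotheses gives $\theta\cdot\dim V^{\geq n}\geq 0$; for each $n\leq 0$ one has $V^{\geq n}\supseteq V^{\geq 0}\supseteq\im I$, so the second family gives $\theta\cdot\dim V^{\geq n}\geq\theta\cdot\dim V$. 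Hence both families of inequalities together force $\langle\chi_\theta,\lambda\rangle\geq 0$ for every admissible $\lambda$, which is semistability. Conversely, testing the criterion on the two-step cocharacters recovers the hypotheses exactly: the grading $V=V(0)\oplus V(1)$ with $V(1)=V'$ a subrepresentation inside $\ker J$ has weight $\theta\cdot\dim V'$, and the grading $V=V(-1)\oplus V(0)$ with $V(0)=V'$ a subrepresentation containing $\im I$ has weight $\theta\cdot\dim V'-\theta\cdot\dim V$. The $\theta$-stable statement follows by carrying strictness through the same computation, noting that a nontrivial admissible $\lambda$ produces at least one proper nontrivial filtration step contributing a strict inequality.

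The \textbf{crux} is the reduction from an arbitrary admissible cocharacter (a general multi-step filtration) to the two one-step cases: this is precisely what the Abel-summation rewriting accomplishes, together with the key observation that \emph{every} filtration step automatically lands in one of the two constrained classes, being either contained in $\ker J$ (for $n\geq 1$) or containing $\im I$ (for $n\leq 0$). A secondary point that needs care is fixing the Hilbert--Mumford sign convention consistently and observing that, because the framing maps $I,J$ are acted on nontrivially, the diagonal scaling cocharacter does not act trivially on $\mathbf{M}$; consequently no $\theta\cdot\dim V=0$ normalization is required and the $G_{\dv}$-action is effective enough to yield genuine stability. As an alternative organization one could apply the Crawley--Boevey trick, presenting the framed data as an ordinary representation of the quiver enlarged by a single one-dimensional vertex absorbing $I$ and $J$, and then quote King's criterion directly; but the direct computation above is self-contained.
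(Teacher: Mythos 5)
The paper does not actually prove this proposition: it is quoted from \cite{Nak1} as a known fact, so there is no in-paper argument to compare against. Your route --- King's adaptation of the Hilbert--Mumford criterion, classifying the cocharacters of $G_{\dv}$ admitting a limit as filtrations of $V$ by subrepresentations with $\im I\subseteq V^{\geq 0}$ and $V^{\geq 1}\subseteq \ker J$, rewriting $\langle\chi_\theta,\lambda\rangle$ by Abel summation, and recovering the hypotheses from two-step cocharacters --- is exactly the standard proof one finds in the cited literature, and the semistability half of your argument is correct and complete in outline. The weight computations for the blocks of $X,Y,I,J$, the identity $\sum_n n\,(\theta\cdot\dim V(n))=\sum_{n\geq 1}\theta\cdot\dim V^{\geq n}+\sum_{n\leq 0}(\theta\cdot\dim V^{\geq n}-\theta\cdot\dim V)$, and the observation that every filtration step falls into one of the two constrained classes all check out. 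Your remark that the framing makes the diagonal cocharacter act nontrivially, so no $\theta\cdot\dim V=0$ normalization is needed, is also the right point to flag.

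The one genuine gap is in your last sentence on stability. The claim that ``a nontrivial admissible $\lambda$ produces at least one proper nontrivial filtration step'' is false for cocharacters whose grading is concentrated in a single nonzero degree. For $\lambda(t)=t\cdot\mathrm{id}$ the limit exists precisely when $J=0$ (the filtration steps are only $0$ and $V$), and the pairing is $\theta\cdot\dim V$; for $\lambda(t)=t^{-1}\cdot\mathrm{id}$ the limit exists precisely when $I=0$, with pairing $-\theta\cdot\dim V$. These are nontrivial cocharacters not in the kernel of the action (exactly because of the framing), so Hilbert--Mumford stability demands strict positivity there too, and this is \emph{not} implied by the stated hypotheses, which only quantify over \emph{proper nonzero} $V'$. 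Concretely, for the $A_1$ quiver with $\dv=\dw=1$ and $\theta=1$, the representation $I=J=0$ has no proper nonzero subrepresentations, so the criterion as you have proved it would declare it stable, whereas it is GIT-unstable. The standard fix is to allow $V'\in\{0,V\}$ in the criterion (equivalently, run the argument through the Crawley--Boevey trick, where $V$ itself becomes a proper subrepresentation of the enlarged quiver representation), or to note that for $\theta=\pm(1,\dots,1)$ the extra conditions just say $I\neq 0$ resp.\ $J\neq 0$. Your proof needs this case added; as written the stability direction does not close.
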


It follows from Proposition \ref{prop: rep-theory-stability} that all stability conditions $\theta$ such that $\theta_{i}>0$ for all $i\in Q_{0}$ are equivalent to each other. We will denote such a stability condition by $\theta>0$. Similarly, we have stability conditions such that $\theta<0$. 

Under some assumptions, $\pi:\qv_{Q,\theta}(\dv,\dw)\to \qv_{Q,0}(\dv,\dw)$ is a symplectic resolution of singularities of its image.

When the data $Q$, $\dv$, $\dw$, and $\theta$ is fixed, we will often save on notation and denote $\qv:=\qv_{Q,\theta}(\dv,\dw)$.

\subsection{Torus action and fixed points}\label{sec: torus and fixed points}

Fix $Q$, $\dv$, $\dw$, and $\theta$. The vector space ${\bf{M}}$ has actions of the \emph{framing torus} $\bA:=(\Cs)^{|\dw|}$. There is another action of a rank one torus, denoted $\Cs_{\hbar}$ by scaling the cotangent fibers with weight $1$. In formulas,
\[
(a,\hbar).(X,Y,I,J) = (X,\hbar^{-1} Y, Ia^{-1},\hbar^{-1} aJ), \quad (a,\hbar) \in \bA \times \Cs_{\hbar}
\]
Both actions descend to $\qv$, and we denote $\bT:=\bA\times \C^\times_{\hbar}$. We will be interested in the $\bT$-fixed locus of $\qv$. 

The vector spaces $V_{i}$ descend to tautological bundles $\tb_{i}$ over $\qv$, defined by 
\[
\tb_{i}=(\mu^{-1}(0)\times V_{i})/G_{\dv}
\]
where $g\in G_{\dv}$ acts on $(p,v)\in \mu^{-1}(0)\times V_{i}$ by $g \cdot (p,v)=(g^{-1} p, g_{i}^{-1} v)$. Similarly, there are topologically trivial tautological bundles $\tbw_i$ induced by the framing spaces. Both types of bundles are $\bT$-equivariant.

When $p\in \qv^\bT$, the vector space $\tb_i|_p$ is a $\bT$-module, and thus may be decomposed into $\bT$ weight spaces. 

Let $p \in \qv^\bT$ and let $[(X,Y,I,J)]$ be a representative of $p$. Since $p$ is $\bT$-fixed, there is a homomorphism $\rho: \bT \to G_{\dv}$ such that
\[
(a,\hbar) \cdot (X,Y,I,J)= \rho(a,\hbar) \cdot (X,Y,I,J)
\]
Written explicitly,
\[
(X,\hbar^{-1} Y,I a^{-1},\hbar^{-1} a J)= (g X g^{-1},g Y g^{-1},g I,J g^{-1}), \quad g=\rho(a,\hbar)
\]
So $\bT$ acts on each $V_{i}$ space via $\rho$. Let $a_{i,j}$ for $i \in Q_{0}$ and $1 \leq j \leq \dw_{i}$ be the $\bA$-weights of the framing spaces $W_{i}$. The decomposition of $V_{i}$ into $\bT$-weight spaces takes the form 
\[
V_{i}=\bigoplus_{j \in Q_{0}} \bigoplus_{k=1}^{\dw_{j}} \bigoplus_{l \in \mathbb{Z}} V_i(a_{j,k}^{\pm} \hbar^{l})
\]
where $V_i(a_{j,k} \hbar^{l})$ denotes the (possibly empty) space of weight $a_{j,k} \hbar^{l}$ of $V_i$.

From the equation $gI=Ia^{-1}$, it follows that $I(W_i(a_{i,j}))\subset V(a_{ij}^{-1})$. Similarly, the equation $Jg^{-1}=\hbar^{-1} a J$ implies $J(V_{i}(w))\subset W_i(\hbar w^{-1})$. Furthermore, for any $e \in Q_{1}$, we have $X(V_{t(e)}(w)) \subset V_{h(e)}(w)$ and $Y(V_{h(e)}(w)) \subset V_{t(e)}(\hbar^{-1} w)$.

Note also that the $\bT$-weights of $\tb_{i}|_{p}$ are the inverse of the weights of $V_{i}$ under the $\bT$-action by $\rho$ for the fixed point $p$.

We will need the following lemmas characterizing representatives of equivalence classes of $\bT$-fixed stable representations. They utilize the computation and notation from above. 

\begin{lemma}\label{lem: wght-spces}
Consider a quiver variety $\qv:=\qv_{Q,\theta}(\dv,\dw)$, and let $p\in\qv^{\bT}$. Let $V$ be a representative of $p$. Then
\begin{enumerate}
\item If $\theta<0$, then $V_i(a_{j,k}^{b} \hbar^{c})=0$ unless $b=-1$ and $c\geq 1$ for all $i \in Q_{0}$.

\item If $\theta>0$, then $V_i(a_{j,k}^{b}\hbar^{c})=0$ unless $b=-1$ and $c \leq 0$ for all $i \in Q_{0}$.. 
\end{enumerate}
\end{lemma}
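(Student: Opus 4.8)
The plan is to analyze the constraints imposed on the weight-space decomposition of $V$ by combining the stability condition with the intertwining relations derived just above the lemma statement. The key observation is that a $\bT$-fixed stable representation $V=(X,Y,I,J)$ must be generated, in the appropriate sense dictated by $\theta$, from the framing data via the quiver maps, and the $\bT$-weights propagate through these maps in a controlled way.

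\textbf{Setup and the core weight-tracking argument.} First I would record the weight-propagation rules established in the paragraph preceding the lemma. The map $I$ sends $W_i(a_{i,j})$ into $V_i(a_{i,j}^{-1})$, so the images of the framing start in weights of the form $a_{j,k}^{-1}\hbar^{0}$ (i.e.\ $b=-1$, $c=0$). The edge maps satisfy $X(V_{t(e)}(w))\subset V_{h(e)}(w)$ (preserving $\hbar$-weight) and $Y(V_{h(e)}(w))\subset V_{t(e)}(\hbar^{-1}w)$ (decreasing $\hbar$-weight by one). I would observe that repeatedly applying $X$ and $Y$ to the image of $I$ produces only weights with $b=-1$ and $c\le 0$: the $a$-exponent $b=-1$ is never changed by $X$ or $Y$ (they are $\bA$-weight preserving), and the $\hbar$-exponent $c$ starts at $0$ and can only stay the same ($X$) or decrease ($Y$).

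\textbf{Invoking stability to pin down the support.} The crucial step is to use Proposition~\ref{prop: rep-theory-stability}. For $\theta>0$, the relevant stability implication is the one governing subrepresentations containing $\im I$: any such proper subrepresentation violates stability, which forces $V$ to be generated by $\im I$ under the action of $X$ and $Y$. Concretely, I would define $V'$ to be the smallest subrepresentation containing $\im I$ (equivalently, the span of all iterated images of $X,Y$ applied to $\im I$) and argue via the strict stability inequality that $V'=V$. Since every weight appearing in $V'$ has the form $a_{j,k}^{-1}\hbar^{c}$ with $c\le 0$ by the propagation analysis, this yields case~(2). For $\theta<0$ (case~(1)), the dual stability condition $V'\subset\ker J$ applies, and I would run the mirror-image argument: $V$ is \emph{cogenerated} by $\ker J$, so one tracks weights backward through $J$ (which sends $V_i(w)$ into $W_i(\hbar w^{-1})$) and the adjoint propagation, giving $b=-1$ and $c\ge 1$. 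The shift to $c\ge 1$ rather than $c\ge 0$ comes from the $\hbar$-factor in the $J$-relation, so I would be careful to track that the cogenerating weights land in $\hbar^{1}$ rather than $\hbar^{0}$.

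\textbf{Main obstacle.} The hard part will be making the ``generation/cogeneration'' step fully rigorous rather than heuristic: I must verify that the span of iterated $X,Y$-images of $\im I$ is genuinely a subrepresentation (closed under all edge maps $X_e,Y_e$, under $I$, and mapped into $W$ appropriately by $J$) so that Proposition~\ref{prop: rep-theory-stability} applies to it, and then confirm the strict inequality is violated unless this span is all of $V$. A subtle point is handling the $\hbar$-weight bookkeeping at the boundary ($c=0$ versus $c\ge 1$) consistently between the two dual cases, ensuring the asymmetry in the two conclusions is correctly attributed to the $\hbar$-weight carried by $J$ but not by $I$. Once the subrepresentation generated by $\im I$ (resp.\ the quotient cogenerated toward $\ker J$) is identified with $V$, the weight constraints follow immediately from the propagation rules, completing both cases.
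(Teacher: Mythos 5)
Your proposal is correct and follows essentially the same route as the paper's proof: invoke Proposition~\ref{prop: rep-theory-stability} to conclude that for $\theta>0$ the representation is generated by $\im I$ under $X,Y$ (and dually, cogenerated via $J$ for $\theta<0$), then read off the allowed weights from the propagation rules established just before the lemma. The paper states this in two sentences; your version merely spells out the weight bookkeeping and the generation/cogeneration step that the paper leaves implicit.
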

\begin{proof}

By Proposition \ref{prop: rep-theory-stability}, if $\theta>0$, then for a stable point $(X,Y,I,J)$, the image of the $I$ generates each $V_{i}$ under the action of $X$ and $Y$. Part 2 follows by the above discussion of how the quiver maps affect weight spaces. Part 1 is similar.

\end{proof}

\begin{lemma}\label{fixed-pt-J-I=0}
Consider a quiver variety $\qv:=\qv_{Q,\theta}(\dv,\dw)$, and let $p\in\qv^{\bT}$. Let $(X,Y,I,J)$ be a representative of $p$. 

\begin{enumerate}
    \item $J_iI_i=0$ for all $i \in Q_0$.
    \item If $\theta<0$, then $I_i=0$ for all $i \in Q_0$.
    \item If $\theta>0$, then $J_{i}=0$ for all $i \in Q_{0}$.
\end{enumerate}
\end{lemma}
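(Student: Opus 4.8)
The plan is to prove all three statements by tracking $\Cs_{\hbar}$-weights through the maps $I$ and $J$, using the weight decomposition of a fixed representative recorded in the discussion preceding the lemma together with the vanishing statement of Lemma \ref{lem: wght-spces}. The essential input, valid for any $\theta$, is that the framing space $W_i$ carries only weights of the form $a_{i,j}$, i.e.\ every weight of $W_i$ has $\hbar$-power $0$, whereas $I$ and $J$ interact with the $\hbar$-grading rigidly: from $gI=Ia^{-1}$ one has $I_i(W_i(a_{i,j}))\subset V_i(a_{i,j}^{-1})$, and from $Jg^{-1}=\hbar^{-1}aJ$ one has $J_i(V_i(w))\subset W_i(\hbar w^{-1})$. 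These two weight-shift rules are all the input I will need.

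For Part 1 I would simply compose the two inclusions. Restricting $J_iI_i$ to the weight space $W_i(a_{i,j})$, the map $I_i$ sends it into $V_i(a_{i,j}^{-1})$, a space of $\hbar$-power $0$, and $J_i$ then sends that into $W_i(\hbar\, a_{i,j})$. Since this target has $\hbar$-power $1$ and $W_i$ has no weights of positive $\hbar$-power, the composite vanishes on each $W_i(a_{i,j})$, so $J_iI_i=0$. (Equivalently, $J_iI_i\in\End(W_i)$ is gauge-invariant, hence at the fixed point satisfies $J_iI_i=\hbar^{-1}A_i(J_iI_i)A_i^{-1}$, where $A_i$ is the diagonal $\bA$-action on $W_i$; comparing matrix entries and using that $\hbar$ is a free coordinate forces every entry to be $0$.) This argument is insensitive to the sign of $\theta$, as it must be.

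For Parts 2 and 3 I would feed in Lemma \ref{lem: wght-spces}. If $\theta<0$, that lemma says every nonzero weight space of $V_i$ has $\hbar$-power $\geq 1$; but $I_i$ lands in $V_i(a_{i,j}^{-1})$, which has $\hbar$-power $0$ and is therefore zero, whence $I_i=0$. If $\theta>0$, every nonzero weight space $V_i(w)$ has $w=a_{j,k}^{-1}\hbar^{c}$ with $c\leq 0$; then $J_i$ maps it into $W_i(\hbar w^{-1})=W_i(a_{j,k}\hbar^{1-c})$, whose $\hbar$-power $1-c\geq 1$ is strictly positive, hence zero. Summing over weight spaces gives $J_i=0$.

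The only real obstacle is bookkeeping: one must keep the conventions for the $\bT$-action on $I$, $J$, $V$, and $W$ mutually consistent, in particular the inversions $a\mapsto a^{-1}$ and the placement of the $\hbar$ shift, since a sign error in the $\hbar$-grading would spuriously exchange the roles of Parts 2 and 3. Once the weight-shift rules for $I$ and $J$ established before the lemma are taken as given, there is no further difficulty; all three statements are formal consequences of the $\hbar$-grading.
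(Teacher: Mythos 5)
Your proof is correct and follows essentially the same route as the paper's: Part 1 by composing the weight-shift rules for $I$ and $J$ and noting that $W_i$ has trivial $\Cs_{\hbar}$-weight, and Parts 2 and 3 by combining those same rules with the vanishing statements of Lemma \ref{lem: wght-spces}. The parenthetical gauge-invariance reformulation of Part 1 is a harmless extra observation; nothing further is needed.
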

\begin{proof}
Since $p$ is $\bT$-fixed, $JI(W_{i}(a_{i,j})) \subset W_i(a_{i,j} \hbar)$. Since framing spaces have trivial $\Cs_{\hbar}$-weight, $JI=0$.

For statements 2 and 3, we utilize Lemma \ref{lem: wght-spces}. 

Since $p$ is $\bT$-fixed, $I_i(W_{i}(a_{i,j})) \subset V_{i}(a_{i,j}^{-1})$. If $\theta<0$, the latter weight space is $0$ by Lemma \ref{lem: wght-spces}. This proves statement 2.

Similarly, $J_i(V_{i}(a_{j,k}^{-1} \hbar^{c})) \subset W_{i}(a_{j,k} \hbar^{1-c})$. If $\theta>0$, the former weight space is $0$ unless $c\leq 0$, in which case the latter is $0$. This proves statement 3.
\end{proof}

\subsection{Slant sum of quivers}\label{sec: slant sum quivers}

Let $Q^{(1)}$ and $Q^{(2)}$ be quivers and let $\sspt_{r} \in Q^{(r)}_{0}$. Let $Q$ be the quiver $Q=(Q_{0},Q_{1})$ such that $Q_{0}=Q^{(1)}_{0} \sqcup Q^{(2)}_{0}$ and $Q_{1}=Q^{(1)}_{1} \sqcup Q^{(2)}_{1} \sqcup \{\star_{1} \to \star_{2}\}$. We call $Q$ the \emph{slant sum} of $Q^{(1)}$ and $Q^{(2)}$ over the vertices $\sspt_{1}$ and $\sspt_{2}$ and will denote this by
\[
Q=Q^{(1)} {}_{\sspt_1}\slantsum_{\sspt_2}Q^{(2)}
\]
or sometimes just $Q=Q^{(1)}\slantsum Q^{(2)}$ if the choices of $\sspt_{1}$ and $\sspt_{2}$ are clear from context. In words, $Q$ is given by taking the disjoint union of $Q^{(1)}$ and $Q^{(2)}$ and adding an arrow from $\sspt_{1}$ to $\sspt_{2}$.

Let $\dv^{(r)},\dw^{(r)} \in \mathbb{Z}_{\geq 0}^{Q^{(r)}_{0}}$. We call $\sspt_{1}$ and $\sspt_{2}$ \emph{compatible} if $\dv^{(1)}_{\sspt_{1}}=\dw^{(2)}_{\sspt_{2}}$.

Let $\dv, \dw \in \mathbb{Z}_{\geq}^{Q_{0}}$ be defined by
\[
\dv_{i}=\begin{cases}
    \dv^{(1)}_{i} & \text{if $i \in Q^{(1)}_{0}$} \\
    \dv^{(2)}_{i} & \text{if $i \in Q^{(2)}_{0}$}
\end{cases}
\]
and
\[
\dw_{i}= \begin{cases}
    \dw^{(r)}_{i} & \text{if $i \in Q^{(r)}_{0} \setminus \{\sspt_{2}\}$, $r \in \{1,2\}$} \\
    0 & \text{if $i=\sspt_{2}$}
\end{cases}
\]
We will denote this by
\[
\dv=\dv^{(1)} {}_{\sspt_1}\slantsum_{\sspt_2} \dv^{(2)}, \quad \dw=\dw^{(1)} {}_{\sspt_1}\slantsum_{\sspt_2} \dw^{(2)}
\]
or sometimes just $\dv^{(1)} \slantsum \dv^{(2)}$ (similarly for $\dw$).

Intuitively, we have identified the gauge vertex at $\sspt_{1}$ with the framed vertex at $\sspt_{2}$ and chosen to view the result as a gauge vertex. All together, we call $Q, \dv, \dw$ the \emph{slant sum} of the quiver gauge theories associated to $Q^{(1)},\dv^{(1)},\dw^{(1)}$ and $Q^{(2)},\dv^{(2)},\dw^{(2)}$.

Finally, given stability vectors $\theta^{(r)} \in \mathbb{Z}^{Q^{(r)}_{0}}$, we define $\theta \in \mathbb{Z}^{Q_{0}}$ by
\[
\theta_{i}=\theta^{(r)}_{i}, \quad \text{ where $i \in Q^{(r)}_{0}$}
\]
and denote it as $\theta^{(1)}\slantsum \theta^{(2)}$.

\begin{definition}
    Fix the data of $Q^{(r)}$, $\dv^{(r)}$, $\dw^{(r)}$, and $\theta^{(r)}$ as above for $r\in \{1,2\}$. Let $\sspt_{1}$ and $\sspt_{2}$ be compatible vertices. Define $Q$, $\dv$, $\dw$, and $\theta$ as above. The \emph{slant sum} of $\qv_{Q^{(1)},\theta^{(1)}}(\dv^{(1)},\dw^{(1)})$ and $\qv_{Q^{(2)},\theta^{(2)}}(\dv^{(1)},\dw^{(2)})$ over $\sspt_{1}$ and $\sspt_{2}$ is the quiver variety $\qv_{Q,\theta}(\dv,\dw)$.
\end{definition}

We will at times omit pieces of the notation and denote this by $\qv^{(1)} \slantsum \qv^{(2)}$.

\subsection{Relationship between slant sums}\label{relat-slant-sum}

Fix $\qv^{(r)}:=\qv_{Q^{(r)},\theta^{(r)}}(\dv^{(r)},\dw^{(r)})$ for $r\in \{1,2\}$ as above. Let $\sspt_{1}$ and $\sspt_{2}$ be compatible vertices. Let $\qv:=\qv^{(1)} {}_{\sspt_{1}}\slantsum_{\sspt_{2}} \qv^{(2)}$. There is a diagram relating these quiver varieties.

We fix the vector spaces $V^{(r)}_{i}$ and $W^{(r)}_{i}$ used in the construction of $\qv^{(r)}$. Then we use
\[
V_{i}:= V^{(r)}_{i} \text{ where $i \in Q^{(r)}_{0}$}
\]
and
\[
W_{i}:=\begin{cases}
   W^{(r)}_{i} & \text{if $i \in Q^{(r)}_{0} \setminus \{\sspt_2\}$, $r \in \{1,2\}$}  \\
    0 & \text{if $i=\sspt_{2}$}
\end{cases}
\]
as the vector spaces in the construction of $\qv$.

Consider the affine variety
\[
{\bf{M}}(\dv^{(1)},\dw^{(1)}) \times \Hom(V^{(1)}_{\sspt_{1}},W^{(2)}_{\sspt_{2}})_{\text{iso}} \times {\bf{M}}(\dv^{(2)},\dw^{(2)}) 
\]
where the subscript $\text{iso}$ denotes the subvariety consisting of linear isomorphisms. Let $\bf{Y}$ be the subvariety satisfying the moment map equations and stability for both $Q^{(1)}$ and $Q^{(2)}$, i.e.
\[
{\bf{Y}}=\mu^{-1}_{1}(0)^{s} \times \Hom(V^{(1)}_{\sspt_{1}},W^{(2)}_{\sspt_{2}})_{\text{iso}} \times \mu^{-1}_{2}(0)^{s}
\]
The projection
\[
{\bf{Y}} \to \mu^{-1}_{1}(0)^{s} \times \mu^{-1}_{2}(0)^{s}
\]
is a $G=G_{\dv^{(1)}}\times G_{\dv^{(2)}}$-equivariant map which thus descends to the quotients
\[
{\bf{Y}}/G \to \qv^{(1)} \times \qv^{(2)}
\]

We denote points in ${\bf{Y}}$ by $(p^{(1)},\phi,p^{(2)})$, where $p^{(r)}=(X^{(r)},Y^{(r)},I^{(r)},J^{(r)})$ for $r \in \{1,2\}$ denotes the quiver representation as usual. We define a morphism

    \[
F: {\bf{Y}} \to {\bf{M}}(\dv,\dw)
    \]
    by 
    \[
    F(p^{(1)},\phi,p^{(2)})=(X,Y,I,J)
    \]
    where the components of the linear maps $(X,Y,I,J)$ are defined as follows. Set
    \[
X_{e}=X^{(r)}_{e} \quad \text{where $e \in Q^{(r)}_{1}$}
    \]
    and similarly for $Y_{e}$. Set
    \[
I_{i} = \begin{cases}
    I^{(r)}_{i} & \text{if $i \in Q^{(r)}_{0} \setminus \{\sspt_{2}\}$} \\
    0 & \text{if $i=\sspt_{2}$}
\end{cases}
    \]
    and similarly for $J_{i}$. Finally, for the new edge $e_{0}=\sspt_{1} \to \sspt_{2}$, we set $X_{e_{0}}= I^{(2)}_{\sspt_{2}} \circ \phi$ and $Y_{e_{0}}=\phi^{-1} \circ J^{(2)}_{\sspt_{2}}$.

\begin{lemma}
    The morphism $F$ is $G$-equivariant.
\end{lemma}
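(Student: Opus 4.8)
The statement asserts that $F$ intertwines the $G=G_{\dv^{(1)}}\times G_{\dv^{(2)}}$-actions on source and target, where on the target we use the identification $G\cong G_{\dv}$ coming from $Q_{0}=Q^{(1)}_{0}\sqcup Q^{(2)}_{0}$ and $V_{i}=V^{(r)}_{i}$, so that $G$ acts on ${\bf{M}}(\dv,\dw)$ by the usual gauge action $g.(X,Y,I,J)=(gXg^{-1},gYg^{-1},gI,Jg^{-1})$. The plan is a direct component-by-component verification, after first pinning down the $G$-action on ${\bf{Y}}$.

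First I would fix the $G$-action on ${\bf{Y}}$ and note it preserves ${\bf{Y}}$. Write $g=(g^{(1)},g^{(2)})$. On the two outer factors $\mu^{-1}_{r}(0)^{s}$ the group acts through the respective standard gauge actions; in particular $g$ transforms $p^{(r)}$ only through $g^{(r)}$, and these actions preserve the moment map zero locus and stability. The middle factor $\Hom(V^{(1)}_{\sspt_{1}},W^{(2)}_{\sspt_{2}})_{\text{iso}}$ requires care: since $W^{(2)}_{\sspt_{2}}$ is a framing space for $Q^{(2)}$, the factor $G_{\dv^{(2)}}$ acts trivially on it, while $G_{\dv^{(1)}}$ acts on $V^{(1)}_{\sspt_{1}}$ through its $GL(V^{(1)}_{\sspt_{1}})$ component. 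Hence $g\cdot\phi=\phi\circ(g^{(1)}_{\sspt_{1}})^{-1}$, and since precomposition by an invertible map preserves isomorphisms, the locus $\text{iso}$ is stable.

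Next, for all components of $F$ that simply copy data from $Q^{(1)}$ or $Q^{(2)}$ — that is, $X_{e},Y_{e}$ for $e\in Q^{(r)}_{1}$ and $I_{i},J_{i}$ for $i\in Q_{0}\setminus\{\sspt_{2}\}$ (recall $I_{\sspt_{2}}=J_{\sspt_{2}}=0$ because $W_{\sspt_{2}}=0$) — equivariance is immediate: the gauge action on ${\bf{M}}(\dv,\dw)$ restricted to these components coincides with the gauge action on the corresponding ${\bf{M}}(\dv^{(r)},\dw^{(r)})$, and $F$ is the identity on them.

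The only components requiring a genuine check are those of the new edge $e_{0}=\sspt_{1}\to\sspt_{2}$, where $t(e_{0})=\sspt_{1}\in Q^{(1)}_{0}$ and $h(e_{0})=\sspt_{2}\in Q^{(2)}_{0}$. For $X_{e_{0}}=I^{(2)}_{\sspt_{2}}\circ\phi$ I would compute the $X_{e_{0}}$-component of $F(g\cdot(p^{(1)},\phi,p^{(2)}))$ to be $(g^{(2)}_{\sspt_{2}}I^{(2)}_{\sspt_{2}})\circ(\phi\circ(g^{(1)}_{\sspt_{1}})^{-1})$ and compare with the gauge transform $g_{h(e_{0})}X_{e_{0}}g_{t(e_{0})}^{-1}=g^{(2)}_{\sspt_{2}}(I^{(2)}_{\sspt_{2}}\phi)(g^{(1)}_{\sspt_{1}})^{-1}$; these agree. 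For $Y_{e_{0}}=\phi^{-1}\circ J^{(2)}_{\sspt_{2}}$ the one subtlety is that $(g\cdot\phi)^{-1}=g^{(1)}_{\sspt_{1}}\circ\phi^{-1}$, together with $J^{(2)}_{\sspt_{2}}\mapsto J^{(2)}_{\sspt_{2}}(g^{(2)}_{\sspt_{2}})^{-1}$, so that the $Y_{e_{0}}$-component of $F(g\cdot(p^{(1)},\phi,p^{(2)}))$ equals $g^{(1)}_{\sspt_{1}}\phi^{-1}J^{(2)}_{\sspt_{2}}(g^{(2)}_{\sspt_{2}})^{-1}$, which matches the gauge transform $g_{t(e_{0})}Y_{e_{0}}g_{h(e_{0})}^{-1}=g^{(1)}_{\sspt_{1}}(\phi^{-1}J^{(2)}_{\sspt_{2}})(g^{(2)}_{\sspt_{2}})^{-1}$. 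The mild obstacle — really the only place where an inverse could slip — is keeping the precomposition-by-inverse convention for the action on $\phi$ consistent with the appearance of $\phi^{-1}$ in the definition of $Y_{e_{0}}$; once that is pinned down, the verification closes.
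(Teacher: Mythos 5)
Your proof is correct and follows essentially the same route as the paper's: the paper likewise observes that equivariance is automatic on all components except those of the new edge $e_{0}$, and reduces to the two identities $g_{\sspt_{2}} X_{e_{0}} g_{\sspt_{1}}^{-1}=g_{\sspt_{2}} I^{(2)}_{\sspt_{2}} \phi'$ and $g_{\sspt_{1}} Y_{e_{0}} g_{\sspt_{2}}^{-1}= \phi'^{-1} J^{(2)}_{\sspt_{2}} g_{\sspt_{2}}^{-1}$ with $\phi'=\phi g_{\sspt_{1}}^{-1}$, exactly the computations you carry out. Your version simply spells out the action on the middle factor and the inversion $(g\cdot\phi)^{-1}=g^{(1)}_{\sspt_{1}}\circ\phi^{-1}$ more explicitly, which the paper leaves as ``immediate.''
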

\begin{proof}
    The only things to check are that $g_{\sspt_{2}} X_{e_{0}} g_{\sspt_{1}}^{-1}=g_{\sspt_{2}} I^{(2)}_{\sspt_{2}} \phi'$ and $g_{\sspt_{1}} Y_{e_{0}} g_{\sspt_{2}}^{-1}= \phi'^{-1} J^{(2)}_{\sspt_{2}} g_{\sspt_{2}}^{-1}$ where $\phi'=\phi g_{\sspt_{1}}^{-1}$. These are immediate.
\end{proof}
    
    Let ${\bf{Z}}=F^{-1}(\mu^{-1}(0)^{s}) \subset {\bf{Y}}$. 

By the definitions, we obtain a diagram 
\begin{equation}\label{eq: slant sum diagram}
 \begin{tikzcd}
      \mu^{-1}_{1}(0)^{s}\times \mu^{-1}_{2}(0)^{s} \arrow[d] 
      & {\bf{Y}} \arrow[d]  \arrow{l}& {\bf{Z}}    \arrow[hookrightarrow,l] \arrow[d] \arrow[r] & \mu^{-1}(0)^{s} \arrow[d] \\
      \qv^{(1)} \times \qv^{(2)} & {\bf{Y}}/G \arrow[l]  & {\bf{Z}}/G \arrow[hookrightarrow,l] \arrow[r]  & \qv
       \end{tikzcd}
\end{equation}

Later we will need the following lemma.
\begin{lemma}\label{lem: preserve stability}
The following holds:

\begin{enumerate}
    \item The closed subset of ${\bf{Y}}$ defined by $J^{(2)}_{\sspt_{2}} I^{(2)}_{\sspt_{2}}=0$ is equal to $F^{-1}(\mu^{-1}(0))$.
    \item If $\theta^{(2)}<0$, then $\{I^{(2)}_{\sspt_{2}}=0\} \subset {\bf{Z}}$.
     \item If $\theta^{(2)}>0$, then $\{J^{(2)}_{\sspt_{2}} =0\} \subset {\bf{Z}}$.
\end{enumerate}
 
\end{lemma}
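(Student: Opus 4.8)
The plan is to reduce both statements to a vertex-by-vertex computation of the slant-sum moment map along $F$, followed by a subrepresentation-level analysis of stability.

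First I would prove (1) by computing $\mu \circ F$. For a point $(p^{(1)},\phi,p^{(2)}) \in {\bf Y}$, write $(X,Y,I,J)=F(p^{(1)},\phi,p^{(2)})$ and evaluate the moment-map component $\mu_i$ at each vertex $i \in Q_0$. At every vertex other than $\sspt_1,\sspt_2$ no new arrow is attached, so $\mu_i$ equals $\mu^{(r)}_i$ and vanishes because $p^{(r)} \in \mu_r^{-1}(0)$. At $\sspt_2$ the arrow $e_0$ contributes $X_{e_0}Y_{e_0}=I^{(2)}_{\sspt_2}J^{(2)}_{\sspt_2}$ while $I_{\sspt_2}=J_{\sspt_2}=0$; since $\mu^{(2)}_{\sspt_2}=0$ forces the $Q^{(2)}$-arrow terms to equal $-I^{(2)}_{\sspt_2}J^{(2)}_{\sspt_2}$, these cancel and $\mu_{\sspt_2}=0$ identically. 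At $\sspt_1$ the arrow $e_0$ (with tail $\sspt_1$) contributes $-Y_{e_0}X_{e_0}=-\phi^{-1}(J^{(2)}_{\sspt_2}I^{(2)}_{\sspt_2})\phi$, and $\mu^{(1)}_{\sspt_1}=0$ together with $I_{\sspt_1}=I^{(1)}_{\sspt_1}$, $J_{\sspt_1}=J^{(1)}_{\sspt_1}$ gives $\mu_{\sspt_1}=-\phi^{-1}(J^{(2)}_{\sspt_2}I^{(2)}_{\sspt_2})\phi$. As $\phi$ is an isomorphism, this vanishes if and only if $J^{(2)}_{\sspt_2}I^{(2)}_{\sspt_2}=0$, proving (1).

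For (2) and (3), membership in $\mu^{-1}(0)$ is immediate from (1), since $I^{(2)}_{\sspt_2}=0$ (resp. $J^{(2)}_{\sspt_2}=0$) implies $J^{(2)}_{\sspt_2}I^{(2)}_{\sspt_2}=0$; the content is $\theta$-stability of $(X,Y,I,J)$, which I would verify with the subrepresentation criterion of Proposition \ref{prop: rep-theory-stability}. I would first record the two sign reformulations: for $\theta<0$ a point is stable exactly when no nonzero subrepresentation lies in $\ker J$ (costability), and for $\theta>0$ exactly when no proper subrepresentation contains $\im I$ (generation). The key structural input is that the connecting arrow degenerates: $X_{e_0}=0$ when $I^{(2)}_{\sspt_2}=0$, and $Y_{e_0}=0$ when $J^{(2)}_{\sspt_2}=0$. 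Given a candidate subrepresentation $V'=\bigoplus_i V'_i$ of the slant sum, I would split it into $V'^{(1)}=\bigoplus_{i\in Q^{(1)}_0}V'_i$ and $V'^{(2)}=\bigoplus_{i\in Q^{(2)}_0}V'_i$, each automatically invariant under the respective $X^{(r)},Y^{(r)}$. For (3): if $V'\supset\im I$ then $V'^{(1)}$ is a $Q^{(1)}$-subrepresentation containing $\im I^{(1)}$, so generation for $p^{(1)}$ gives $V'^{(1)}=V^{(1)}$, hence $V'_{\sspt_1}=V_{\sspt_1}$; invariance under $e_0$ then yields $\im I^{(2)}_{\sspt_2}=X_{e_0}(V_{\sspt_1})=I^{(2)}_{\sspt_2}\phi(V_{\sspt_1})\subset V'_{\sspt_2}$ because $\phi$ is onto $W^{(2)}_{\sspt_2}$, so $V'^{(2)}\supset\im I^{(2)}$ and generation for $p^{(2)}$ forces $V'^{(2)}=V^{(2)}$, whence $V'=V$. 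Statement (2) is dual: if $V'\subset\ker J$ then costability for $p^{(1)}$ gives $V'^{(1)}=0$, hence $V'_{\sspt_1}=0$, and $Y_{e_0}(V'_{\sspt_2})\subset V'_{\sspt_1}=0$ with $Y_{e_0}=\phi^{-1}J^{(2)}_{\sspt_2}$ ($\phi$ injective) gives $J^{(2)}_{\sspt_2}(V'_{\sspt_2})=0$, so $V'^{(2)}\subset\ker J^{(2)}$ and costability for $p^{(2)}$ forces $V'^{(2)}=0$.

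The main obstacle is precisely this propagation across the cut vertex $\sspt_2$: deleting the framing $W^{(2)}_{\sspt_2}$ removes the direct handle that stability at $\sspt_2$ would have on $J^{(2)}_{\sspt_2}$ (resp. $I^{(2)}_{\sspt_2}$), and one must recover it from the connecting arrow using that $\phi$ is an isomorphism and that the opposite arrow map vanishes. I note that the argument invokes the stability condition for \emph{both} constituents, so it is cleanest when $\theta^{(1)}$ carries the same sign as $\theta^{(2)}$, consistent with the global convention $\theta=\pm(1,\dots,1)$; handling genuinely mixed signs would instead require tracking the numerical contributions $\theta^{(1)}\cdot\dim V'^{(1)}$ and $\theta^{(2)}\cdot\dim V'^{(2)}$ directly, which is the only delicate point.
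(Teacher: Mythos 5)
Your part (1) is the paper's argument verbatim: compute the slant-sum moment map vertex by vertex, note that the $\sspt_2$-component reduces to the $\sspt_2$-component of $\mu_2$ and hence vanishes, and that the $\sspt_1$-component equals $\pm\phi^{-1}J^{(2)}_{\sspt_2}I^{(2)}_{\sspt_2}\phi$, which vanishes iff $J^{(2)}_{\sspt_2}I^{(2)}_{\sspt_2}=0$ because $\phi$ is invertible.

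For parts (2) and (3) you take a genuinely different route, and it has a real gap in generality. The paper splits a test subrepresentation $S$ as $S^{(1)}\oplus S^{(2)}$ and bounds $\theta\cdot\dim S=\theta^{(1)}\cdot\dim S^{(1)}+\theta^{(2)}\cdot\dim S^{(2)}$ term by term: stability of $p^{(1)}$ controls the first summand for \emph{any} generic $\theta^{(1)}$, while the second summand is controlled either trivially by the sign of $\theta^{(2)}$ or by stability of $p^{(2)}$ once the vanishing of $I^{(2)}_{\sspt_2}$ (resp. $J^{(2)}_{\sspt_2}$) makes $S^{(2)}$ a legitimate test subrepresentation. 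You instead reformulate stability as generation (resp. cogeneration) and propagate it across the connecting arrow $e_0$; this is clean, and it is a nice observation that for uniformly positive $\theta$ the generation property crosses the cut using only $X_{e_0}=I^{(2)}_{\sspt_2}\phi$ and surjectivity of $\phi$. However, your appeal to ``generation for $p^{(1)}$'' (resp. ``costability for $p^{(1)}$'') is the characterization of stability only when $\theta^{(1)}>0$ (resp. $\theta^{(1)}<0$). The lemma imposes no condition on $\theta^{(1)}$, and Theorem \ref{thm: slant sum and fixed points}, where the lemma is consumed, assumes only $\theta^{(2)}>0$ or $\theta^{(2)}<0$ with $\theta^{(1)}$ an arbitrary generic stability parameter; your claim that the same-sign restriction matches a ``global convention $\theta=\pm(1,\dots,1)$'' is not accurate --- the paper only remarks that such parameters are generic. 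So as written your argument proves a strictly weaker statement; the general case needs exactly the numerical tracking of $\theta^{(1)}\cdot\dim S^{(1)}$ and $\theta^{(2)}\cdot\dim S^{(2)}$ that you defer to in your final sentence, which is precisely what the paper's proof carries out.
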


\begin{proof}
Consider the first statement. The moment map equations for $\qv$ differ from those of $\qv^{(1)}$ or $\qv^{(2)}$ only at the two vertices $\sspt_{1}$ and $\sspt_{2}$. The $\sspt_{1}$-component of the moment map for the $G$ action on ${\bf{M}}(\dv,\dw)$ applied to $F(p^{(1)},\phi,p^{(2)})$ is
   \begin{align*}
    & \sum_{\substack{e\in Q_{1} \\ h(e)=\sspt_{1}}}X_{e} Y_{e}- \sum_{\substack{e \in Q_{1}\\ t(e)=\sspt_{1}}} Y_{e} X_{e} +I_{\sspt_{1}} J_{\sspt_{1}} 
     \\
   &=\sum_{\substack{e\in Q^{(1)}_{1} \\ h(e)=\sspt_{1}}}X^{(1)}_{e} Y^{(1)}_{e}- \sum_{\substack{e \in Q^{(1)}_{1}\\ t(e)=\sspt_{1}}} Y^{(1)}_{e} X^{(1)}_{e} +I^{(1)}_{\sspt_{1}} J^{(1)}_{\sspt_{1}}-Y_{e_{0}} X_{e_{0}} \\
   &= -Y_{e_{0}} X_{e_{0}} \\
   &= \phi^{-1} J^{(2)}_{\sspt_{2}} I^{(2)}_{\sspt_{2}} \phi
    \end{align*}
  Similarly, the $\sspt_{2}$-component of the moment map is
    \begin{align*}
         & \sum_{\substack{e\in Q_{1} \\ h(e)=\sspt_{2}}}X_{e} Y_{e}- \sum_{\substack{e \in Q_{1}\\ t(e)=\sspt_{2}}} Y_{e} X_{e} \\
         &= \sum_{\substack{e\in Q^{(2)}_{1} \\ h(e)=\sspt_{2}}}X^{(2)}_{e} Y^{(2)}_{e}- \sum_{\substack{e \in Q^{(2)}_{1}\\ t(e)=\sspt_{2}}} Y^{(2)}_{e} X^{(2)}_{e} +X_{e_{0}}Y_{e_{0}} \\
         &= \sum_{\substack{e\in Q^{(2)}_{1} \\ h(e)=\sspt_{2}}}X^{(2)}_{e} Y^{(2)}_{e}- \sum_{\substack{e \in Q^{(2)}_{1}\\ t(e)=\sspt_{2}}} Y^{(2)}_{e} X^{(2)}_{e} +I^{(2)}_{\sspt_{2}} J^{(2)}_{\sspt_{2}}
    \end{align*}
    which is exactly the $\sspt_{2}$-component of the moment map equation for the $G_{\dv^{(2)}}$ action on ${\bf{M}}(\dv^{(2)},\dw^{(2)})$ and hence is $0$. This proves the first statement.

For the second statement, recall the representation-theoretic characterization of stability from Proposition \ref{prop: rep-theory-stability}. 

Let $S=(S_i)_{i\in Q_0}$ be a subrepresentation of 
    \[
    f(p^{(1)},\phi,p^{(2)})= (X,Y,I,J)
    \]
such that $S_i\subseteq\ker J_i$ for all $i\in Q_0$.

There is a decomposition
\[
(S_i)_{i\in Q_0} = (S^{(1)}_i)_{i\in Q_0^{(1)}} \oplus (S_i^{(2)})_{i\in Q_0^{(2)}}.
\]
Denote $S^{(r)}=(S_i^{(r)})_{i\in Q_0^{(r)}}$ for $r=1,2$. It is automatic that $S^{(1)}$ is a subrepresentation of $(X^{(1)},Y^{(1)},I^{(1)},J^{(1)})$ for which $S^{(1)}_i \subseteq \ker J^{(1)}_i$ for all $i\in Q_0^{(1)}$. So stability for $p^{(1)}$ implies $\theta^{(1)} \cdot \dim S^{(1)} < 0$. If $\theta^{(2)}=-(1,1,\ldots,1)$, then we also have $\theta^{(2)} \cdot \dim S^{(2)} \leq 0$. If $\theta^{(2)}=(1,1,\ldots,1)$ and $J^{(2)}_{\sspt_{2}}=0$, then $S^{(2)}$ is also a subrepresentation of $p^{(2)}$ for which $S^{(2)}_{i} \subseteq \ker J^{(2)}_{i}$ for all $i \in Q_0^{(2)}$; so stability for $p^{(2)}$ implies $\theta^{(2)} \cdot \dim S^{(2)}\leq 0$. In either case, we have
\[
\theta \cdot \dim S = \theta^{(1)} \cdot \dim S^{(1)} + \theta^{(2)} \cdot \dim S^{(2)}\leq 0
\]

Now let $S=(S_{i})_{i \in Q_{0}}$ be a subrepresentation of $(X,Y,I,J)$ such that $\im I_{i} \subseteq S_{i}$ for all $i \in Q_{0}$. As above, we obtain $S^{(1)}$ and $S^{(2)}$. It is automatic that $\im I^{(1)}_{i} \subseteq S^{(1)}_{i}$ for all $i \in Q^{(1)}_{i}$. So stability for $p^{(1)}$ implies that $\theta^{(1)} \cdot \dim S^{(1)} \leq \theta \cdot \dim V^{(1)}$. If $\theta^{(2)}=(1,1,\ldots,1)$, then $\theta^{(2)} \cdot \dim S^{(2)} \leq \theta^{(2)} \dim V^{(2)}$. If $\theta^{(2)}=-(1,1,\ldots,1)$ and $I_{\sspt_{2}}^{(2)}=0$, then $S^{(2)}$ is also a subrepresentation of $p^{(2)}$ such that $\im I^{(2}_{i} \subseteq S^{(2)}_{i}$ for all $i \in Q^{(2)}_{0}$; so stability for $p^{(2)}$ implies $\theta^{(2)} \cdot \dim S^{(2)} \leq \theta^{(2)} \cdot \dim V^{(2)}$. Either way

\[
\theta \cdot \dim S = \theta^{(1)} \cdot \dim S^{(1)} + \theta^{(2)} \cdot \dim S^{(2)} \leq \theta \cdot \dim V
\]
This concludes the proofs of parts 2 and 3.

\end{proof}

\subsection{Split fixed points}\label{sec: split fixed points}

We next define the notion of split fixed points. This is a technical assumption that allows us to choose a (almost) canonical basis of a gauge vector space, allowing us in practice to view it as a framing space. 

\begin{definition}\label{def: split fixed points}
   Let $\qv$ be a quiver variety. Let $p \in \qv^{\bT}$ be a fixed point. We say that $p$ is \emph{split over $i \in Q_{0}$} if all the $\bT$-weight spaces of $\tb_{i}|_{p}$ have dimension at most $1$. 
\end{definition}

Equivalently, by the discussion of Section \ref{sec: torus and fixed points}, choosing a representative $\tilde{p}$ of the fixed point $p$, there is a homomorphism $\rho: \bT \to G_{\dv}$ such that $t \cdot \tilde{p}=\rho(t) \cdot \tilde{p}$ and the joint eigenspaces of the action of $\rho(\bT)$ on the vector space $V_{i}$ are all one dimensional.

\begin{definition}
    Let $p \in \qv^{\bT}$ be a split fixed point over $i \in Q_{0}$. A \emph{$(p,i)$-chamber of $\bT$}, denoted $\chamb$, is a choice of total ordering of the $\bT$-weights of $\tb_{i}|_{p}$.
\end{definition}

Choosing a weight basis with respect to the ordering given by a $(p,i)$-chamber $\chamb$, we obtain an isomorphism $V_{i} \cong \mathbb{C}^{\dv_{i}}$. Different choices of basis give isomorphisms that differ by a diagonal matrix in $GL(\C^{\dv_{i}})$.
    
\subsection{Fixed points and slant sums}

Fix $\qv^{(r)}:=\qv_{Q^{(r)},\theta^{(r)}}(\dv^{(r)},\dw^{(r)})$ for $r=1,2$ as above, equipped with the actions for $\bT^{(r)}$ for $r \in \{1,2\}$. Let $\sspt_{1}$ and $\sspt_{2}$ be compatible vertices. Let $\qv:=\qv^{(1)} {}_{\sspt_{1}}\slantsum_{\sspt_{2}} \qv^{(2)}$, which is equipped with the action of a torus $\bT$ where $\rank \bT=\rank \bT^{(1)}+\rank \bT^{(2)}-\dw^{(2)}_{\sspt_{2}}-1$.

\begin{theorem}\label{thm: slant sum and fixed points}
   Let $p^{(1)}$ be a fixed point on $\qv^{(1)}$ split over $\sspt_{1}$. Choose a $(p^{(1)},\sspt_{1})$-chamber $\chamb$ of $\bT^{(1)}$. Assume that $\theta^{(2)}> 0$ or $\theta^{(2)}<0$. Then there is a closed embedding
    \[
\ssmap:=\ssmap_{p^{(1)},\chamb}: (\qv^{(2)})^{\bT^{(2)}} \to \qv^{\bT}
    \]
\end{theorem}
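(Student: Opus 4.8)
The plan is to realize $\ssmap$ concretely through the $G$-equivariant morphism $F\colon \mathbf{Y}\to \mathbf{M}(\dv,\dw)$ constructed in Section~\ref{relat-slant-sum}. Because $p^{(1)}$ is split over $\sspt_1$, every $\bT^{(1)}$-weight space of $V^{(1)}_{\sspt_1}$ is at most one-dimensional, so after fixing a representative $(X^{(1)},Y^{(1)},I^{(1)},J^{(1)})$ of $p^{(1)}$ and ordering these weights by the chamber $\chamb$ I obtain a weight basis, hence an isomorphism $\phi\colon V^{(1)}_{\sspt_1}\iso \mathbb{C}^{\dv^{(1)}_{\sspt_1}}\cong W^{(2)}_{\sspt_2}$, canonical up to the diagonal torus $(\Cs)^{\dv^{(1)}_{\sspt_1}}\subset G_{\dv^{(1)}}$ acting at $\sspt_1$. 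For a fixed point $p^{(2)}$ with representative $(X^{(2)},Y^{(2)},I^{(2)},J^{(2)})$ I would then define $\ssmap(p^{(2)}):=[F(p^{(1)},\phi,p^{(2)})]$.

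I would first check that this lands in $\mu^{-1}(0)^{s}$. Stability (and with it the moment-map equation) is exactly where the sign hypothesis enters: if $\theta^{(2)}<0$ then Lemma~\ref{fixed-pt-J-I=0}(2) forces $I^{(2)}_{\sspt_2}=0$ at the fixed point $p^{(2)}$, so Lemma~\ref{lem: preserve stability}(2) places $(p^{(1)},\phi,p^{(2)})$ in $\mathbf{Z}=F^{-1}(\mu^{-1}(0)^{s})$; dually, if $\theta^{(2)}>0$ then Lemma~\ref{fixed-pt-J-I=0}(3) gives $J^{(2)}_{\sspt_2}=0$ and Lemma~\ref{lem: preserve stability}(3) applies. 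Independence of the auxiliary choices then follows from $G$-equivariance of $F$: rescaling the weight basis by a diagonal $d$ changes the representative of $p^{(1)}$ by $d\in G_{\dv^{(1)}}$ and simultaneously replaces $\phi$ by $\phi d^{-1}$, while changing the representative of $p^{(2)}$ to $h\cdot p^{(2)}$ leaves $\phi$ untouched; in both cases $F$ of the new datum differs from $F(p^{(1)},\phi,p^{(2)})$ by an element of $G$, hence defines the same point of $\qv$.

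The crux, and the step I expect to be the main obstacle, is producing the torus $\bT$, the inclusion $\iota\colon\bT\hookrightarrow\bT^{(1)}\times\bT^{(2)}$, and the $\bT$-fixedness of the image. I would build the homomorphism $\rho\colon\bT\to G_{\dv}$ witnessing fixedness out of the homomorphisms $\rho^{(1)},\rho^{(2)}$ attached to $p^{(1)},p^{(2)}$. The subtlety is that after the slant sum the framing space $W^{(2)}_{\sspt_2}$ has been replaced by the gauge space $V^{(1)}_{\sspt_1}$, so the $\dw^{(2)}_{\sspt_2}$ framing characters of $\bT^{(2)}$ at $\sspt_2$ must be specialized to the $\bT^{(1)}$-weights $\chi_1,\dots,\chi_{\dv^{(1)}_{\sspt_1}}$ of $V^{(1)}_{\sspt_1}$, with the two scaling tori $\Cs_{\hbar^{(1)}},\Cs_{\hbar^{(2)}}$ identified; this specialization is precisely the data of $\iota$, and the rank count $\rank\bT=\rank\bT^{(1)}+\rank\bT^{(2)}-\dw^{(2)}_{\sspt_2}-1$ reflects it. With $\iota$ in hand, $\phi$ intertwines $\rho^{(1)}(t)_{\sspt_1}$ with the pulled-back framing action on $W^{(2)}_{\sspt_2}$ by construction of the weight basis, so the equivariance of the two constituent representations upgrades to $\bT$-equivariance; the only genuinely new relations to verify are for the arrow $e_0$, where $X_{e_0}=I^{(2)}_{\sspt_2}\phi$ and $Y_{e_0}=\phi^{-1}J^{(2)}_{\sspt_2}$ inherit the correct weights from the equivariance of $I^{(2)}_{\sspt_2},J^{(2)}_{\sspt_2}$ and the matching of $\hbar$. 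This same computation simultaneously yields $\tb_i|_p=\iota^{*}\tb^{(r)}_i|_{p^{(r)}}$.

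Finally, to see that $\ssmap$ is a closed embedding I would exhibit an inverse on its image: after normalizing the $Q^{(1)}$-part of a representative to $p^{(1)}$ in the chosen weight basis using $G_{\dv^{(1)}}$, the $Q^{(2)}$-data is recovered by $I^{(2)}_{\sspt_2}=X_{e_0}\phi^{-1}$, $J^{(2)}_{\sspt_2}=\phi Y_{e_0}$, together with $(X^{(2)},Y^{(2)},I^{(2)}_i,J^{(2)}_i)$ read off directly, the residual diagonal ambiguity at $\sspt_1$ being absorbed into $G_{\dv^{(2)}}$-equivalence exactly as in the well-definedness argument. The recovered representation is $\theta^{(2)}$-stable and $\bT^{(2)}$-fixed, giving a morphism $\mathrm{im}(\ssmap)\to(\qv^{(2)})^{\bT^{(2)}}$ inverse to $\ssmap$, so $\ssmap$ is an isomorphism onto its image; the one remaining point, which I would verify separately, is that the image is closed, being cut out by the condition that the $Q^{(1)}$-tautological data agree with $p^{(1)}$.
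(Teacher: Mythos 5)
Your proposal follows essentially the same route as the paper's proof: the map is built from the $G$-equivariant morphism $F$ on $\mathbf{Y}$ using the isomorphism $\phi$ supplied by splitness and the chamber, stability is checked via Lemmas \ref{fixed-pt-J-I=0} and \ref{lem: preserve stability} exactly as in the paper, and $\bT$-fixedness comes from the same inclusion $\iota\colon\bT\hookrightarrow\bT^{(1)}\times\bT^{(2)}$. One small point where the paper is more careful: independence of the weight basis is not covered by your ``simultaneous change of the $p^{(1)}$-representative and of $\phi$'' bookkeeping (a pure rescaling of the basis fixes the representative and changes only $\phi$ by an element $t$ of the framing torus at $\sspt_2$), and one genuinely needs the $\bT^{(2)}$-fixedness of $p^{(2)}$ to trade $t$ for an element of $G_{\dv^{(2)}}$ --- an idea you do invoke later for the diagonal ambiguity, so this is a presentational slip rather than a gap; on the other hand, you supply an explicit inverse-on-the-image argument for the closed-embedding claim that the paper leaves implicit.
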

\begin{proof}
    Since $p^{(1)}$ is $\bT^{(1)}$-fixed, there is a homomorphism $\rho: \bT^{(1)} \to G_{\dv^{(1)}}$ as in Section \ref{sec: torus and fixed points} which provides a decomposition of $V_{\sspt_{1}}$ into weight spaces of $\bT^{(1)}$. Since $p^{(1)}$ is split over $\sspt_{1}$, the nonzero weight spaces are one-dimensional. The chamber $\chamb$ totally orders the weight spaces. Making a choice of weight basis for $V_{\sspt_{1}}$, we obtain an isomorphism $\phi: V_{\sspt_{1}} \xrightarrow{\sim} W_{\sspt_{2}}$ which depends both on $\chamb$ and the choice of basis.

    Let $p^{(2)} \in (\qv^{(2)})^{\bT^{(2)}}$. Let $\pi_{r}: \mu_{r}^{-1}(0)^{s} \to \qv^{(r)}$ be the quotient map for $r\in \{1,2\}$. Let $\pi_1^{\mathrm{inv}}$ be the restriction of $\pi_1$ to the locus of quiver representations which are invariant with respect to the action of $\bT^{(1)}$ induced by the grading on $V_{i}$ for $i \in Q^{(1)}_{0}$ and the action on edges of the quiver. We will define a morphism
    \[
\tilde{\ssmap}: (\pi_{1}^{\mathrm{inv}})^{-1}(p^{(1)}) \times \pi_{2}^{-1}(p^{(2)}) \to \qv
    \]
    which will descend to the map in the statement of the theorem. Note that the desired domain of $\tilde{\ssmap}$ lies in the top left of \eqref{eq: slant sum diagram}, and we will use the notation ${\bf{Y}}$ and ${\bf{Z}}$ defined there. Our choice of isomorphism $\phi: V_{\sspt_{1}} \xrightarrow{\sim} W_{\sspt_{2}}$ discussed above provides a section $ \pi_{1}^{-1}(p^{(1)}) \times \pi_{2}^{-1}(p^{(2)}) \to {\bf{Y}}$. Since $p^{(2)}$ is $\bT^{(2)}$ fixed, Lemma \ref{fixed-pt-J-I=0} implies that the hypotheses of Lemma \ref{lem: preserve stability} hold, implying that this section factors through ${\bf{Z}}$. Composing with the top right horizontal arrow followed by the rightmost vertical arrow in \eqref{eq: slant sum diagram} provides the desired $\tilde{\ssmap}$.

    The map $\tilde{\ssmap}$ is easily seen to be invariant under the action of $G_{\dv^{(2)}}$ and factors of the centralizer $Z_{G_{\dv^{(1)}}}(\rho(\bT^{(1)}))$ corresponding to vertices besides $\sspt_{1}$. Since $p^{(1)}$ is split over $\sspt_1$, the factor of this centralizer in $GL(V_{\sspt_1})$ consists of matrices that are diagonal with respect to a choice of weight basis of $V_{\sspt_{1}}$. Using the isomorphism $\phi: V_{\sspt_{1}} \to W_{\sspt_2}$ and the fact that $p^{(2)}$ is $\bT^{(2)}$-fixed, we can compensate for the action of these diagonal matrices by an element of $G_{\dv^{(2)}}$. Thus $\tilde{\ssmap}$ is also $Z_{G_{\dv^{(1)}}}(\rho(\bT^{(1)}))$ invariant and thus descends to a map $p^{(1)} \times p^{(2)} \to \qv$. Varying $p^{(2)}$ provides the map $\ssmap$ in the statement of the theorem.

    We now show that $\ssmap$ is independent of the choice of $\phi$, i.e. the choice of weight basis for $V_{\sspt_{1}}$. Let $\phi_{1},\phi_{2}: V_{\sspt_{1}} \to W_{\sspt_{2}}$ be two isomorphisms as in the first paragraph of this proof, leading to $\tilde{\ssmap}_{1}$ and $\tilde{\ssmap}_{2}$. Then $\tilde{\ssmap}_{1}(x,y)= \tilde{\ssmap}_{2}(x,t \cdot y)$, where $t:=\phi_{1} \circ \phi_{2}^{-1} \in \text{Aut}(W_{\sspt_{2}}) \cap \bT^{(2)}$. Since $p^{(2)}$ is $\bT^{(2)}$-fixed, for any $y\in \pi_{2}^{-1}(p^{(2)})$, $\exists g \in G_{\dv^{(2)}}$ such that $g \cdot y = t \cdot y$. So $\tilde{\ssmap}_{2}(x,t \cdot y)=\tilde{\ssmap}(x,g \cdot y)=(1, g)\cdot \tilde{\ssmap}_{2}(x,y)$. In other words, $\tilde{\ssmap}_{1}$ and $\tilde{\ssmap}_{2}$ differ by an element of $G$ and thus descend to the same maps.

    Finally we must justify why $\Psi$ lands in the $\bT$-fixed locus. To show this, we define a inclusion 
    \begin{equation}\label{eq: torus inclusion}
    \iota: \bT =\bA \times \Cs_{\hbar} \hookrightarrow \bT^{(1)} \times \bT^{(2)}=\bA^{(1)} \times \Cs_{\hbar^{(1)}} \times \bA^{(2)} \times \Cs_{\hbar^{(2)}}
    \end{equation}
    We have
    \begin{itemize}
        \item a diagonal inclusion $\Cs_{\hbar} \hookrightarrow \Cs_{\hbar^{(1)}} \times \Cs_{\hbar^{(2)}}$,
        \item an inclusion
    \[
   \bA \times \Cs_{\hbar}= \bA^{(1)} \times \Cs_{\hbar} \times \left(\prod_{\substack{i \in Q^{(2)}_{0} \\ i \neq \sspt_{2}}} (\Cs)^{\dw^{(2)}_{i}} \right) \subset \bA^{(1)} \times \bA^{(2)}
    \]
    whose component inside of $\text{Aut}(W_{j})$ for $j \neq \sspt_{2}$ is induced by the identity maps and whose component inside $\text{Aut}(W_{\sspt_{2}})$ is given by combining the $\bT^{(1)}=\bA^{(1)} \times \Cs_{\hbar}$ action on $V_{\sspt_{1}}$ with the isomorphism $\phi$
    \end{itemize}
    Putting these two maps together gives $\iota$, along which $\tilde{\ssmap}$ is equivariant. So $\ssmap$ is $\bT$-fixed.  
\end{proof}

The maps $\ssmap_{p^{(1),\chamb}}$ depend crucially on $p^{(1)}$. The dependence on $\chamb$, however, is very mild.

\begin{proposition}\label{prop: chamber dependence}
    Let $\chamb$ and $\chamb'$ be $(p^{(1)},\sspt_{1})$-chambers. Let $F_{i}$, $i \in S$ be the fixed components of $\qv^{(2)}$. There exist a permutation $s: S \to S$ and isomorphisms $f_{i}: F_{i} \to F_{s(i)}$ such that
    \[
    \ssmap_{p^{(1)},\chamb}\big|_{F_{i}}=\ssmap_{p^{(1)},\chamb'}\big|_{F_{s(i)}} \circ f_{i}
    \]
\end{proposition}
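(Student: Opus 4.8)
The plan is to trace through the construction of $\ssmap_{p^{(1)},\chamb}$ in the proof of Theorem \ref{thm: slant sum and fixed points} and isolate exactly how the output changes under $\chamb \rightsquigarrow \chamb'$. The chamber enters only through the isomorphism $\phi\colon V_{\sspt_1}\xrightarrow{\sim} W_{\sspt_2}$, built by ordering a weight basis of $V_{\sspt_1}$ according to $\chamb$ and matching it to the standard basis of $W_{\sspt_2}$. Since $p^{(1)}$ is split over $\sspt_1$ (Definition \ref{def: split fixed points}), each nonzero $\bT^{(1)}$-weight space of $V_{\sspt_1}$ is one-dimensional, so $\chamb$ and $\chamb'$ differ by a permutation $\sigma\in S_{n}$ of the $n:=\dv^{(1)}_{\sspt_1}=\dw^{(2)}_{\sspt_2}$ weight lines. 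First I would invoke the $\phi$-independence already established in the proof of Theorem \ref{thm: slant sum and fixed points}: any rescaling within the one-dimensional weight spaces lies in $\bA^{(2)}\cap \mathrm{Aut}(W_{\sspt_2})$ and is absorbed by a gauge transformation because $p^{(2)}$ is $\bT^{(2)}$-fixed. This lets me choose the $\phi'$ for $\chamb'$ to use the \emph{same} weight vectors as $\phi$, merely reordered, so that $\phi'=P\circ \phi$ for the permutation matrix $P=P_{\sigma}\in GL(W_{\sspt_2})$.

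Next I would produce $s$ and the $f_i$ from $P$. The matrix $P$ permutes the coordinate characters of the $\sspt_2$-factor of the framing torus and fixes the other characters, hence normalizes $\bA^{(2)}$ and commutes with $\Cs_{\hbar^{(2)}}$, so it normalizes $\bT^{(2)}$. Acting by $P$ through the natural action of $GL(W^{(2)}_{\sspt_2})$ on $\qv^{(2)}$ extending the framing-torus action of $\bA^{(2)}$ yields an automorphism $\Sigma:=\Sigma_{P}$ of $\qv^{(2)}$ which, although not $\bT^{(2)}$-equivariant, satisfies $t\cdot \Sigma=\Sigma\cdot (P^{-1}tP)$ with $P^{-1}tP\in\bT^{(2)}$. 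Consequently, for $y\in(\qv^{(2)})^{\bT^{(2)}}$ one has $t\cdot\Sigma(y)=\Sigma\big((P^{-1}tP)\cdot y\big)=\Sigma(y)$, so $\Sigma$ preserves the fixed locus $(\qv^{(2)})^{\bT^{(2)}}$, permutes its connected components $F_i$, and restricts to isomorphisms between them. This defines the permutation $s\colon S\to S$ by $\Sigma(F_i)=F_{s(i)}$ and the isomorphisms $f_i:=\Sigma|_{F_i}\colon F_i\to F_{s(i)}$.

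Finally I would verify the identity at the level of the lift $\tilde\ssmap$ and descend. With $\Sigma=\Sigma_{P}$ chosen so that passing from $y$ to $\Sigma(y)$ twists the framing maps at $\sspt_2$ by $P$, a direct comparison shows that the representation $F(p^{(1)},\phi,y)$ built from $\phi$ and a representative $y$ of a point of $F_i$ coincides with $F(p^{(1)},\phi',\Sigma(y))$: all data away from the new edge $e_0=\sspt_1\to\sspt_2$ are untouched, and on $e_0$ the two factors of $P$ cancel, since $\phi'=P\circ\phi$ while the framing maps at $\sspt_2$ of $\Sigma(y)$ carry the compensating $P^{-1}$. Thus $\tilde\ssmap_{\chamb}(x,y)=\tilde\ssmap_{\chamb'}(x,\Sigma(y))$ holds on the nose at the level of representatives; descending along the rightmost square of \eqref{eq: slant sum diagram}, and noting that $\Sigma(y)$ is again a $\bT^{(2)}$-fixed point so the hypotheses of Lemma \ref{lem: preserve stability} still hold and the lift lands in ${\bf{Z}}$, yields $\ssmap_{p^{(1)},\chamb}|_{F_i}=\ssmap_{p^{(1)},\chamb'}|_{F_{s(i)}}\circ f_i$.

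I expect the main obstacle to be bookkeeping rather than conceptual: one must keep the direction of the permutation action consistent, which is precisely why isolating the reduction $\phi'=P\circ\phi$ via $\phi$-independence is worthwhile, and one must confirm that the discrepancy between the two constructions is realized by the honest automorphism $\Sigma$ of $\qv^{(2)}$ rather than by a gauge transformation. This is the essential point distinguishing the present argument from the $\phi$-independence step: a torus rescaling can be undone by $G_{\dv^{(2)}}$ and leaves $\ssmap$ unchanged, whereas a permutation $P\notin\bT^{(2)}$ generically moves one fixed component to another, producing the nontrivial permutation $s$ and isomorphisms $f_i$ in the statement.
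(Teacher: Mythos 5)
Your argument is correct and is essentially the paper's own proof, expanded: the paper simply observes that the two maps differ only through the two isomorphisms $V_{\sspt_1}\xrightarrow{\sim}W_{\sspt_2}$ and that composing one with the inverse of the other induces $s$ and the $f_i$, which is precisely your $P=\phi'\circ\phi^{-1}$ acting on $\qv^{(2)}$ through the framing. Your additional bookkeeping (normalizing $\bT^{(2)}$, the cancellation on the edge $e_0$, and the contrast with the gauge-absorbable torus rescalings) is a faithful unpacking of that one-line argument rather than a different route.
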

\begin{proof}
    The maps $\ssmap_{p^{(1)},\chamb}$ and $\ssmap_{p^{(1)},\chamb'}$ depend on $\chamb$ and $\chamb'$ only through the two isomorphisms $V_{\sspt_{1}} \xrightarrow{\sim} W_{\sspt_{2}}$. Composing one with the inverse of the other induces the desired $s$ and $f_{i}$.
\end{proof}
In other words, the maps $\ssmap_{p^{(1)},\chamb}$ depends on $\chamb$ only up to permutation of the fixed components.

\section{Quasimaps and slant sums}\label{sec: quasimaps}

\subsection{Review of quasimaps}\label{sec: qm defs}

As discussed in \S\ref{sec: quiver varieties}, Nakajima quiver varieties are defined as GIT quotients. so we may utilize the theory of quasimaps to a GIT quotient developed in \cite{qm}. In this paper, the domain of quasimaps will always be a parameterized $\mathbb{P}^{1}$.

Let $\qv:=\qv_{Q,\theta}(\dv,\dw)$ be a quiver variety, equipped with the action of $\bT=\bA \times \Cs_{\hbar}$. 

By definition a \emph{quasimap} from $\mathbb{P}^{1}$ to $\qv$ is a map $\mathbb P^{1}\to [\mu^{-1}(0)/G_{\dv}]$. It is \emph{stable} if it generically lands in $\qv$, which is contained in the stack quotient as an open subset. The points for which the map does not land in $\qv$ are called \emph{singularities} of the quasimap.

So a quasimap consists of the data $(f,(\qmtb_{i} )_{i \in Q_{0}})$ where $\qmtb_{i}$ is a vector bundle of rank $\dv_{i}$ on $\mathbb{P}^{1}$ and $f \in H^{0}(\mathbb{P}^{1}, \qmb \oplus \hbar^{-1} \otimes \qmb^{*})$ where
\[
\qmb= \bigoplus_{e \in Q_{1}}\Hom(\qmtb_{t(e)},\qmtb_{h(e)}) \oplus \bigoplus_{i\in Q_0}\Hom(\qmtbw_i,\qmtb_i)
\]
Here $\qmtbw_{i}$ denotes the trivial bundle $W_{i} \times \mathbb{P}^{1}$ on $\mathbb{P}^{1}$, which has a natural $\bA$-equivariant structure, and $\hbar^{-1}$ denotes the trivial line bundle acted on by $\bT$ with weight $\hbar^{-1}$.

The \emph{degree of a quasimap} is defined to be $(\deg \qmtb_{i})_{i \in Q_{0}} \in \mathbb{Z}^{Q_{0}}$.

Let $\qm$ be the moduli space of stable quasimaps from $\mathbb{P}^{1}$ to $\qv$. By \cite{qm}, it is a Deligne-Mumford stack of finite type with a perfect deformation-obstruction theory. Thus it is equipped with a canonical virtual structure sheaf, which we will denote by $\mathcal{O}_{\mathrm{vir}}$. The canonical polarization $T^{1/2}$ of $\qv$ pulls back under the universal evaluation morphism $\qm \times \mathbb{P}^{1} \to [\mu^{-1}(0)/G_{\dv}]$, to a class $\qmpol$, satisfying
\[
\qmpol|_{(f,(\qmtb_{i}))}=\bigoplus_{e \in Q_{1}} \Hom(\qmtb_{t(e)},\qmtb_{h(e)})\oplus \bigoplus_{i \in Q_{0}} \Hom(\qmtbw_{i},\qmtb_{i}) \ominus \bigoplus_{i \in Q_{0}} \Hom(\qmtb_{i},\qmtb_{i})
\]
The virtual tangent space at a quasimap $(f,(\qmtb_{i}))$ is 
\begin{equation}\label{eq: qm tvir}
\qmtvir|_{(f,(\qmtb_i))}= H^{*}\left(\mathbb{P}^{1},\qmpol \oplus \hbar^{-1} \qmpol\right)
\end{equation}

For $x \in \mathbb{P}^{1}$, let $\qm_{\ns x}$ be the moduli space of stable quasimaps which are nonsingular at $x$. The substack of degree $d$ quasimaps and the virtual sheaf on it will be denoted by $\qm^{d}_{\ns x}$ and $\mathcal{O}_{\mathrm{vir}}^{d}$ respectively.


We sometimes refer to an element of $\qm^d_{\ns x}$ only by its section $f$, though it also carries the information of the bundles $\qmtb_i$. As stated, $\qm^{d}_{\ns x}$ depends explicitly on the presentation of $\qv$ as a GIT quotient. In 3d mirror symmetry, this is viewed as a feature.

There is a natural action of $\Cs$ on $\mathbb{P}^{1}$ such that $(\mathbb{P}^{1})^{\Cs}=\{0,\infty\}$. We denote by $q$ the weight of $T_{0} \mathbb{P}^{1}$ and denote this rank one torus by $\Cs_{q}$. The action of $\Cs_{q}$ on $\mathbb{P}^{1}$ induces an action on $\qm^{d}_{\ns \infty}$, as does the action of $\bT$ on $\qv$. There is a natural $\bT \times \Cs_{q}$ equivariant structure on $\mathcal{O}_{\mathrm{vir}}^{d}$ and we will work $\bT\times \Cs_q$-equivariantly. 


There are morphisms
\[
\begin{tikzcd}
    & \qm^{d}_{\ns \infty} \arrow{ld}{\ev_{0}} \arrow{rd}{\ev_{\infty}}  &  \\
    \left[\mu^{-1}(0)/G_{\dv}\right]  &  & \qv
\end{tikzcd}
\]
given by evaluating quasimaps at the respective points.

If a point $p\in \qv$ is chosen, we define $\qm^{d}_{p}$ by the pullback diagram
\[
\begin{tikzcd}
    \qm^{d}_{p}\arrow[d] \arrow[hookrightarrow]{r} & \qm^{d}_{\ns \infty} \arrow{d}{\ev_{\infty}}\\
    \{p\} \arrow[hookrightarrow]{r} & \qv
\end{tikzcd}
\]
of quasimaps such that $f(\infty)=p$. The space $\qm^{d}_{p}$ is sometimes referred to as the moduli space of quasimaps which are \textit{based at $p$.} If $p \in \qv^{\bT}$, then $\qm^{d}_{p}$ is preserved by the $\bT \times \Cs_{q}$ action.

\subsection{Twisted quasimaps}

Choose a co character $\sigma: \Cs \to \bA$. We will also need to consider \emph{$\sigma$-twisted quasimaps to $\qv$}. The definition is identical to that of \S\ref{sec: qm defs}, except instead of the trivial bundles $\qmtbw_{i}$, we use the nontrivial bundles
\[
\qmtbw^{\sigma}_{i}:=(\mathbb{C}^{2}\setminus \{0\}) \times W_i)/\Cs
\]
where $\Cs$ acts by scaling on the first factor and on the second by $\sigma$. 


Denote the moduli space of $\sigma$-twisted quasimaps by $\qm^{\sigma}$. All the discussion of \S\ref{sec: qm defs} applies to $\sigma$-twisted quasimaps. In particular, we have $\qm^{\sigma,d}$, $\qm^{\sigma,d}_{\ns \infty}$, $\qm^{\sigma,d}_{p}$. The virtual tangent space is given by \eqref{eq: qm tvir}.




\subsection{Vertex functions}
The restriction of the evaluation map $\ev_{\infty}: \qm^{d}_{\ns \infty} \to \qv$ to the $\C^\times_q$-fixed locus is proper, see \cite[\S 7.3]{pcmilect}. So we obtain a pushforward in localized equivariant $K$-theory,
\[
(\ev_{\infty})_*: K_{\bT\times\Cs_q}(\qm^{d}_{\ns \infty})\to K_{\bT\times\Cs_q}(\qv)_{loc}:=K_{\bT\times\Cs_q}(\qv) \otimes \operatorname{Frac} K_{\bT \times \Cs_{q}}(pt)
\]


The \emph{symmetrized virtual structure sheaf} on $\qm_{\ns \infty}^{d}$ is defined by

\begin{equation}\label{eq: symvss}
\vrs^{d}= \mathcal{O}_{\mathrm{vir}}^{d} \otimes \left( \mathscr{K}_{\mathrm{vir}}  \frac{ \det \qmpol|_{\infty} }{\det \qmpol|_{0}}\right)^{1/2}
\end{equation}
where $\mathscr{K}_{\mathrm{vir}}=\ev_{\infty}^{*}(\det T \qv)\otimes \left(\det \qmtvir\right)^{-1}$ is the (normalized) virtual canonical bundle. The existence of the square root is discussed in \cite[\S 6.1.8]{pcmilect}.

Enumerative counts of quasimaps are encoded in the generating function called the \emph{descendant vertex function}. For a \emph{descendant} $\tau \in K_{\bT}([\mu^{-1}(0)/G_{\dv}])$ this is defined as
\[
\ver^{(\tau)}(z)=\sum_{d \in \eff(\qv)} (\ev_\infty)_*\left(\qm^d_{\ns \infty}, \ev_0^*(\tau)\otimes\vrs^d\right) z^d
\]
Here the sum runs over the cone of \emph{effective quasimap classes}, which is defined as the set of $d \in \mathbb{Z}^{Q_{0}}$ for which there exists a stable quasimap of degree $d$ and $z^d$ stands for the multidegree $\prod_{i \in Q_{0}} z_i^{d_i}$. Vertex functions are elements of
\[
K_{\bT\times\Cs_q}(\qv)_{loc}[[z]]:=\left\{\sum_{d} a_{d} z^{d} \, \mid \, d \in \eff(\qv), a_{d} \in K_{\bT\times\Cs_q}(\qv)_{loc} \right\}
\]

The case of $\tau=1$ is sometimes referred to as the \textit{bare vertex function} and denoted $\ver(z):=\ver^{(1)}(z)$. For $p \in \qv^{\bT}$, we denote the restriction by $\ver^{(\tau)}_p(z):=\ver^{(\tau)}(z)$.

Using twisted quasimaps, we can also define $\sigma$-twisted vertex functions, which we will denote by $\ver^{(\tau),\sigma}(z)$.

\subsection{Branching rule for vertex functions}\label{sec: vertex and slant sums}

Now we return to slant sums and study vertex functions in the setting of Theorem \ref{thm: slant sum and fixed points}, which we briefly recall.

Let $\qv=\qv^{(1)} \slantsum \qv^{(2)}$ be a slant sum over compatible vertices $\sspt_{1}$ and $\sspt_{2}$. We write the three quivers as $Q$, $Q^{(1)}$, and $Q^{(2)}$. Let $p^{(r)} \in (\qv^{(r)})^{\bT^{(r)}}$ and assume that $p^{(1)}$ is split over $\sspt_{1}$. Assume $\theta^{(2)}=\pm(1,1,\ldots,1)$. Choose a $(p^{(1)},\sspt_{1})$-chamber $\chamb$ and let $p=\ssmap_{p^{(1)},\chamb}(p^{(2)})$ as in Theorem \ref{thm: slant sum and fixed points}.

As in \eqref{eq: torus inclusion}, we have an inclusion of tori $\iota: \bT \hookrightarrow \bT^{(1)} \times \bT^{(2)}$ such that  $\tb_{i}|_{p}=\iota^{*} \tb^{(r)}_{i}|_{p^{(r)}}$ as $\bT$-representations for $i \in Q^{(r)}$, $r\in\{1,2\}$. Furthermore, the composition $\bT \xrightarrow{\iota} \bT^{(1)} \times \bT^{(2)} \xrightarrow{pr_2} \bT^{(2)}$ is surjective.

There are now several quiver varieties present and we will consider based quasimaps to each of them. The notation $\qm_{p}$ and $\qm_{p^{(1)}}$ now refer to based quasimaps to the different varieties $\qv$ and $\qv^{(1)}$, respectively, rather than quasimaps to a fixed variety based at different points.

Any ordered tuple of integers $d=(d_1,d_2,\ldots,d_n)$ where $n=\dim W_{\sspt_{2}}$ gives a cocharacter $\sigma_{d}$ of the framing torus $\bA^{(2)}$, whose component acting on $W_{\sspt_{2}}$ is $(t) \mapsto \text{diag}(t^{d_1},t^{d_2},\ldots,t^{d_n})$ and is the identity on $W^{(2)}_{i}$ for $i \neq \sspt_{2}$.

We can obtain such a tuple from a $\bT\times \Cs_{q}$-fixed quasimap $(f, (\qmtb_{i}))$ in $\qm_{p}$ in the following way. Being fixed by $\bT \times \Cs_{q}$ implies that each bundle $\qmtb_{i}$ is endowed with a grading by $\bT$-weights of $\tb_{i}|_{p}$. The $\bT$-weights of $\tb_{i}|_{p}$ are exactly the same as the $\bT^{(1)}$-weights of $\tb^{(1)}_{i}|_{p^{(1)}}$ for $i \in Q^{(1)}$. In particular, $\qmtb_{\sspt_{1}}$ is graded by distinct and ordered (by $\chamb$) weights $w_1, w_2, \ldots, w_n$ of $\bT^{(1)}$. Then we have $\qmtb_{\sspt_{1}} \cong \bigoplus_{i=1}^{n} w_i \mathcal{O}(d_{i})$, giving rise to a tuple $(d_1,d_2,\ldots, d_n)$.

By continuity, the cocharacter $\sigma_{f}$ depends only on the fixed component $F$ containing $(f,(\qmtb_{i}))$. So we will also denote it by $\sigma_{F}$.

We are now ready to relate based quasimaps to the three quiver varieties.

\begin{theorem}\label{thm: qm and slant sums}
   There is an isomorphism
   \[
\left(\qm^{d^{(1)},d^{(2)}}_{p}\right)^{\bT\times \mathbb{C}^{\times}_{q}} \cong \bigsqcup_{F} F \times \left(\qm^{\sigma_{F},d^{(2)}}_{p^{(2)}}\right)^{\bT^{(2)} \times \Cs_{q}}
   \]
   where $F$ runs over all connected components of $\left(\qm^{d^{(1)}}_{p^{(1)}}\right)^{\bT \times \Cs_{q}}$. Furthermore, this isomorphism identifies the virtual tangent spaces as $\bT$ representations, where the virtual tangent spaces on the right hand side are pulled back via $\iota$.
\end{theorem}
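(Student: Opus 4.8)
The plan is to build the isomorphism by decomposing a $\bT \times \Cs_q$-fixed quasimap to $\qv$ into its $Q^{(1)}$- and $Q^{(2)}$-parts, carrying out the construction of $\ssmap_{p^{(1)},\chamb}$ from Theorem \ref{thm: slant sum and fixed points} in families over $\mathbb{P}^1$. Starting from a stable $\bT\times\Cs_q$-fixed quasimap $(f,(\qmtb_i)_{i\in Q_0})$ based at $p$, I would first restrict the bundles and sections to the vertices and edges of $Q^{(1)}$. Since $Q_0=Q^{(1)}_0\sqcup Q^{(2)}_0$, the degree splits as $d=(d^{(1)},d^{(2)})$ and the restriction is a degree-$d^{(1)}$ quasimap to $\qv^{(1)}$ based at $p^{(1)}$; as the projection $\bT\to\bT^{(1)}$ is surjective (immediate from the explicit form of $\iota$ in \eqref{eq: torus inclusion}), it is $\bT\times\Cs_q$-fixed and therefore lies in a unique component $F$ of $\left(\qm^{d^{(1)}}_{p^{(1)}}\right)^{\bT\times\Cs_q}$. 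Because $p^{(1)}$ is split over $\sspt_1$, the bundle $\qmtb_{\sspt_1}$ splits into distinct $\bT$-weight line bundles $\bigoplus_i w_i\mathcal{O}(d_i)$, locally constant on $F$, and the tuple $(d_1,\dots,d_n)$ determines the cocharacter $\sigma_F$ exactly as in the discussion preceding the statement.

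Next I would extract the $Q^{(2)}$-data. Restricting $(f,(\qmtb_i))$ to $Q^{(2)}$ and using the new edge $e_0=\sspt_1\to\sspt_2$ to provide the framing at $\sspt_2$---that is, recovering $I^{(2)}_{\sspt_2}$ and $J^{(2)}_{\sspt_2}$ from $X_{e_0}$ and $Y_{e_0}$ through the identification $\qmtb_{\sspt_1}\cong \iota^*\qmtbw^{\sigma_F}_{\sspt_2}$---produces a degree-$d^{(2)}$ quasimap to $\qv^{(2)}$ based at $p^{(2)}$, twisted by $\sigma_F$. The twist is precisely $\sigma_F$ because the $\sigma$-twisted framing bundle at $\sspt_2$ is, by its definition, the bundle whose weight-$w_i$ line summand has degree $d_i$, which is exactly the $\bT\times\Cs_q$-equivariant splitting of $\qmtb_{\sspt_1}$; and since $\bT\to\bT^{(2)}$ is surjective, this twisted quasimap is $\bT^{(2)}\times\Cs_q$-fixed. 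I would then check that this assignment is inverse to the gluing map sending an $F$-point together with a $\sigma_F$-twisted quasimap to $\qv^{(2)}$ to the quasimap obtained by setting $X_{e_0}=I^{(2)}_{\sspt_2}\circ\phi$ and $Y_{e_0}=\phi^{-1}\circ J^{(2)}_{\sspt_2}$, as in Theorem \ref{thm: slant sum and fixed points}.

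The technical heart is verifying that the moment-map and stability conditions match in both directions, which is the quasimap version of Lemmas \ref{lem: preserve stability} and \ref{fixed-pt-J-I=0}. The key point is that the $Q^{(2)}$-part is $\bT^{(2)}\times\Cs_q$-fixed and the framing bundles carry trivial $\Cs_{\hbar^{(2)}}$-weight, so $J^{(2)}_{\sspt_2}I^{(2)}_{\sspt_2}$ is a $\bT^{(2)}$-equivariant section of $\Hom(\qmtbw^{\sigma_F}_{\sspt_2},\hbar^{(2)}\qmtbw^{\sigma_F}_{\sspt_2})$ and hence vanishes, since it would otherwise shift $\Cs_{\hbar^{(2)}}$-weight. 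By the moment-map computation in the proof of Lemma \ref{lem: preserve stability}, the new-edge contribution $Y_{e_0}X_{e_0}$ equals $\phi^{-1}J^{(2)}_{\sspt_2}I^{(2)}_{\sspt_2}\phi$ up to sign and therefore vanishes, so that the $\sspt_1$- and $\sspt_2$-components of the moment map for $\qv$ coincide with those for $\qv^{(1)}$ and the framed $\qv^{(2)}$; generic stability then transfers in both directions exactly as in parts 2 and 3 of Lemma \ref{lem: preserve stability}, applied fiberwise over the nonsingular locus of $\mathbb{P}^1$. I expect this matching to be the main obstacle.

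Finally, for the virtual tangent spaces I would decompose the polarization. Writing $Q_1=Q^{(1)}_1\sqcup Q^{(2)}_1\sqcup\{e_0\}$ and using $\qmtbw_{\sspt_2}=0$ on the $\qv$-side together with $\qmtb_{\sspt_1}\cong\iota^*\qmtbw^{\sigma_F}_{\sspt_2}$, the new-edge summand $\Hom(\qmtb_{\sspt_1},\qmtb_{\sspt_2})$ becomes exactly the framing summand $\Hom(\qmtbw^{\sigma_F}_{\sspt_2},\qmtb_{\sspt_2})$ of the $\sigma_F$-twisted polarization of $\qv^{(2)}$, while the remaining summands group by vertex into the two constituent polarizations. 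This gives $\qmpol_{\qv}=\qmpol_{\qv^{(1)}}\oplus\qmpol^{\sigma_F}_{\qv^{(2)}}$, and applying $H^*(\mathbb{P}^1,-\oplus\hbar^{-1}(-))$ as in \eqref{eq: qm tvir} yields $\qmtvir_{\qv}=\qmtvir_{\qv^{(1)}}\oplus\qmtvir^{\sigma_F}_{\qv^{(2)}}$. The identifications $\tb_i|_p=\iota^*\tb^{(r)}_i|_{p^{(r)}}$ from Theorem \ref{thm: slant sum and fixed points} and the $\bT$-equivariance of $\qmtb_{\sspt_1}\cong\iota^*\qmtbw^{\sigma_F}_{\sspt_2}$ guarantee that this is an identity of $\bT$-modules after pullback along $\iota$, as claimed.
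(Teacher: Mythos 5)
Your proposal is correct and follows essentially the same route as the paper's proof: decompose the fixed quasimap over $Q_0=Q^{(1)}_0\sqcup Q^{(2)}_0$, use the $\chamb$-ordered weight-line splitting of $\qmtb_{\sspt_1}$ to define $\sigma_F$ and reinterpret the edge $e_0$ as the framing of a $\sigma_F$-twisted quasimap to $\qv^{(2)}$, and match virtual tangent spaces via the polarization. The only point the paper dwells on that you gloss over is that the resulting $f^{(2)}$ is independent, up to isomorphism, of the choice of identification $\qmtb_{\sspt_1}\cong\qmtbw^{\sigma_F}_{\sspt_2}$ (two choices differ by an element of $\bT^{(2)}$, absorbed using that $f$ is $\bT$-fixed), while conversely the paper is terser than you on the moment-map and stability verification.
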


\begin{proof}
By definition, a quasimap to $\qv$ is a pair $(f,(\qmtb)_{i \in Q_{0}})$, where $\qmtb_i$ are vector bundles on $\mathbb{P}^{1}$ and
\[
f\in H^{0}\left(\mathbb{P}^{1},\qmb\oplus\qmb^*\right)
\]
satisfying aforementioned conditions, where 
\[
\qmb=\bigoplus_{e \in Q_{1}}\Hom(\qmtb_{t(e)},\qmtb_{h(e)})\oplus\bigoplus_{i\in Q_0}\Hom(\qmtbw_i,\qmtb_i)
\]
and $\qmtbw_{i}$ is the trivial bundle of rank $\dw_{i}$. This data is considered up to isomorphism of quasimaps, which are defined to be isomorphisms of the vector bundles (which are identities on $\qmtbw_{i}$). 

Assume that the quasimap $f$ is $\bT\times \Cs_{q}$-fixed and evaluates to $p$ at $\infty$. Being $\bT$-fixed implies that each bundle $\qmtb_{i}$ is graded by the $\bT$-weights of $\tb_{i}|_{p}$. In particular, $\qmtb_{\sspt_{1}}=\bigoplus_{j} w_{j} \qmtb_{\sspt_{1},j}$ where $w_{j}$ run over distinct $\bT$ weights, which are canonically ordered due to the choice of $\chamb$. Let $\qmtbw_{\sspt_{2}}=\bigoplus_{j} a_{j} \mathcal{O}(d_{j})$ where $d_{j}=\deg \qmtb_{\sspt_{1},j}$. Choose an isomorphism $\phi: \qmtb_{\sspt_{1}} \to \qmtbw_{\sspt_{2}}$ equivariant with respect to $\iota$. The isomorphism $\phi$ determines and is determined by isomorphisms $\qmtb_{\sspt_{1},j} \to \mathcal{O}(d_j)$.

Being $\Cs_{q}$-fixed implies that the quasimap evaluates to $p$ everywhere except possibly at $0 \in \mathbb{P}^{1}$.

Let 
\[
\qmb^{(1)}=\bigoplus_{e \in Q_{1}^{(1)}}\Hom(\qmtb_{t(e)},\qmtb_{h(e)})\oplus\bigoplus_{i\in Q_0^{(1)}}\Hom(\qmtbw_i,\qmtb_i)
\]
and 
\[
\qmb^{(2)}=\bigoplus_{e \in Q_{1}^{(2)}}\Hom(\qmtb_{t(e)},\qmtb_{h(e)})\oplus\bigoplus_{i\in Q_0^{(2)}}\Hom(\qmtbw_i,\qmtb_i) \oplus \Hom(\qmtbw_{\sspt_{2 
}}, \qmtb_{\sspt_{2}})
\]

Let $f^{(1)}$ be the global section of $\qmb^{(1)}\oplus (\qmb^{(1)})^{*}$ determined by taking the respective components of $f$. The data $(f^{(1)},(\qmtb_{i})_{i \in Q^{(1)}_{0}})$ defines a quasimap to $\qv^{(1)}$. The quasimap $f^{(1)}$ is $\bT^{(1)}\times \Cs_{q}$-fixed and maps $\infty$ to $p^{(1)}$. Furthermore, an isomorphism $(f,(\qmtb_i)_{i \in Q_{0}}) \cong (f',(\qmtb_i)_{i \in Q_{0}})$ induces an isomorphism of $f^{(1)}$ and $f'^{(1)}$. Thus we have a well defined map
\[
\left(\qm_{p}\right)^{\bT \times \Cs_{q}} \to \left(\qm_{p^{(1)}}\right)^{\bT^{(1)} \times \Cs_{q}}
\]
It is clear from the construction that if $\deg f= (d^{(1)},d^{(2)})$, then $\deg f^{(1)}=d^{(1)}$.

Using $\phi$, we similarly define a global section $f^{(2)}$ of $\qmb^{(2)}\oplus (\qmb^{(2)})^{*}$ by taking the relevant components of $f$.

We claim that the isomorphism class of $f^{(2)}$ does not depend on $\phi$. Let $\phi$ and $\phi'$ be two such isomorphisms, leading to $f^{(2)}$ and $f'^{(2)}$. Note that $\phi' \circ \phi^{-1} \in \text{Aut}(\qmtbw_{\sspt_{2}})$ preserves the $\bT^{(2)}$-grading and is thus the same as the action of an element $t \in \bT^{(2)}$. By definition, the quasimaps $f^{(2)}$ and $f'^{(2)}$ are related by $f^{(2)}=t \cdot f'^{(2)}$. Choose a lift $\tilde{t} \in \bT$ of $t$ under the composition $\bT \xrightarrow{\iota} \bT^{(1)} \times \bT^{(2)} \to \bT^{(2)}$. Since $f$ is $\bT$-fixed, there is an isomorphism $\tilde{t} \cdot (f,(\qmtb_{i})_{i \in Q_{0}}) \cong (f,(\qmtb_{i})_{i \in Q_{0}})$ which induces an isomorphism $(f^{(2)},(\qmtb_{i})_{i \in Q^{(2)}_{0}}) \cong (f'^{(2)},(\qmtb_{i})_{i \in Q^{(2)}_{0}})$.

A similar argument shows that $(f,(\qmtb_{i})_{i \in Q_{0}}) \cong (f',(\qmtb'_{i})_{i \in Q_{0}})$ implies that $(f^{(2)},(\qmtb_{i})_{i \in Q_{0}^{(2)}}) \cong (f'^{(2)},(\qmtb'_{i})_{i \in Q_{0}^{(2)}})$. 

Overall, we obtain a degree preserving map
\[
\left(\qm_{p}\right)^{\bT \times \Cs_{q}} \to \bigsqcup_{F} F \times \left(\qm_{p^{(2)}}^{\sigma_{F}}\right)^{\bT^{(2)} \times \Cs_{q}}
\]

The inverse is easy to construct.

It also follows immediately from \eqref{eq: qm tvir} that the isomorphism of the theorem identifies the virtual tangent spaces as $\bT\times \Cs_{q}$ representations, where we view the virtual tangent spaces on the right hand side as a $\bT$ representation via the map $\iota: \bT \hookrightarrow \bT^{(1)} \times \bT^{(2)}$.

\end{proof}

Since it identifies the virtual tangent spaces, the isomorphism above respects the virtual structure sheaves. Since we assume the canonical polarization is used in \eqref{eq: symvss} for all the symmetrized virtual sheaves, it follows that the isomorphism respects the symmetrized virtual structure sheaves as well. Recall the map $\iota$ from \eqref{eq: descendant map} and let $\tau=\iota^{*}(\tau_1\otimes \tau_2)$. We write the K\"ahler parameters similarly as $z=(z_1,z_2)$.

\begin{theorem}\label{thm: vertex and slant sum 1}
We have
   \[
\ver^{(\tau)}_{p}(z_1,z_2)= 
\sum_{F} \chi\left(F, \frac{(\ev_{0}^{*}(\tau_{1}) \otimes \vrs)|_{F}}{\extpow(N_{\vir}^{\vee}|_{F})}\right) z_1^{\deg F} \iota^{*} \ver^{(\tau_{2}),\sigma_{F}}_{p^{(2)}}(z_2)
 \]
    where the sum runs over $\bT^{(1)} \times \Cs_{q}$-fixed components $F$ of $\qm_{p^{(1)}}$.
\end{theorem}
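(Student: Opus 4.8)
The plan is to evaluate $\ver^{(\tau)}_{p}(z_1,z_2)$ by the virtual localization theorem with respect to $\bT\times\Cs_q$ and then feed in the geometric isomorphism of Theorem \ref{thm: qm and slant sums}. Writing $z=(z_1,z_2)$ and $d=(d^{(1)},d^{(2)})$, restricting the vertex to the fixed point $p$ amounts to pushing forward from the based quasimap spaces $\qm^{d}_{p}$, so that
\[
\ver^{(\tau)}_{p}(z_1,z_2)=\sum_{d} z^{d}\,\chi\!\left(\left(\qm^{d}_{p}\right)^{\bT\times\Cs_q},\frac{\left(\ev_0^{*}(\tau)\otimes\vrs\right)\big|_{\mathrm{fix}}}{\extpow\!\left(N_{\vir}^{\vee}\right)}\right),
\]
where $N_{\vir}$ is the virtual normal bundle of the fixed locus inside $\qm^{d}_{p}$. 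First I would substitute the decomposition of the fixed locus from Theorem \ref{thm: qm and slant sums}, namely $\bigsqcup_{F} F\times(\qm^{\sigma_F,d^{(2)}}_{p^{(2)}})^{\bT^{(2)}\times\Cs_q}$, turning each summand into an integral over a product.

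Next I would factor the integrand across this product. The degree splits as $z^{d}=z_1^{\deg F}z_2^{d^{(2)}}$ since $\deg F=d^{(1)}$ is constant on $F$, and likewise $\sigma_F$ depends only on $F$. By the remark following Theorem \ref{thm: qm and slant sums}, the identification of virtual tangent spaces as $\bT\times\Cs_q$-representations forces $\vrs$ to split as the external product of the symmetrized virtual structure sheaf of $\qm_{p^{(1)}}$ restricted to $F$ and that of $\qm^{\sigma_F}_{p^{(2)}}$; the crucial point here is that the contribution $\Hom(\qmtb_{\sspt_1},\qmtb_{\sspt_2})$ of the new edge to the polarization of $\qv$ becomes, under the isomorphism $\phi:\qmtb_{\sspt_1}\cong\qmtbw_{\sspt_2}$, exactly the framing contribution $\Hom(\qmtbw_{\sspt_2},\qmtb_{\sspt_2})$ to the polarization of $\qv^{(2)}$, so no polarization data is lost. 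Correspondingly the moving part of the virtual tangent space splits, giving $\extpow(N_{\vir}^{\vee})=\extpow((N_{\vir}^{F})^{\vee})\cdot\extpow((N_{\vir}^{(2),\sigma_F})^{\vee})$, where the two factors are the virtual normal bundle of $F$ in $\qm_{p^{(1)}}$ and that of the fixed locus in $\qm^{\sigma_F}_{p^{(2)}}$; this uses that both $pr_1\circ\iota$ and $pr_2\circ\iota$ are surjective, so that ``moving for $\bT\times\Cs_q$'' agrees with ``moving for $\bT^{(r)}\times\Cs_q$'' on each factor. Finally $\ev_0^{*}(\tau)=\ev_0^{*}\iota^{*}(\tau_1\otimes\tau_2)$ splits as $\ev_0^{*}(\tau_1)\boxtimes\ev_0^{*}(\tau_2)$ because the evaluation at $0$ of a $\qv$-quasimap restricts to the evaluations at $0$ of its $\qv^{(1)}$- and $\qv^{(2)}$-constituents.

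With the integrand written as an external product, I would apply multiplicativity of the localized equivariant holomorphic Euler characteristic over the product $F\times(\qm^{\sigma_F,d^{(2)}}_{p^{(2)}})^{\bT^{(2)}\times\Cs_q}$. This separates each summand into the $F$-factor $\chi(F,(\ev_0^{*}(\tau_1)\otimes\vrs)|_F/\extpow((N_{\vir}^F)^{\vee}))\,z_1^{\deg F}$ and a $\qv^{(2)}$-factor. Summing the latter over $d^{(2)}$ reconstitutes precisely the localization expression for the $\sigma_F$-twisted vertex $\ver^{(\tau_2),\sigma_F}_{p^{(2)}}(z_2)$, viewed as a $\bT\times\Cs_q$-class via $\iota^{*}$; surjectivity of $pr_2\circ\iota$ guarantees that $\iota^{*}$ is compatible with the localization, so no denominator degenerates. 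Reorganizing the sum over all $d$ into a sum over the fixed components $F$ of $\qm_{p^{(1)}}$ (with $d^{(1)}=\deg F$) and over $d^{(2)}$ then yields the claimed formula.

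The main obstacle is the careful bookkeeping in the middle step: confirming that the symmetrized virtual structure sheaf genuinely splits as an external product with the new-edge term correctly absorbed into the framing of $\qv^{(2)}$, and that the moving/fixed decomposition is preserved under $\iota^{*}$. Once the remark after Theorem \ref{thm: qm and slant sums} is invoked for the structure sheaf, the remaining work is the formal manipulation of $\extpow$ of a direct sum and the multiplicativity of $\chi$, but one must check that these identities hold in the completed, localized equivariant $K$-theory in which the $z$-series live.
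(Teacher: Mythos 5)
Your proposal is correct and follows essentially the same route as the paper: the paper's proof of this theorem is precisely ``apply the virtual localization theorem and Theorem \ref{thm: qm and slant sums},'' and your argument is a faithful, more detailed expansion of that one-line derivation, including the key points (the splitting of the symmetrized virtual structure sheaf via the identification of virtual tangent spaces, the absorption of the new-edge term into the framing of $\qv^{(2)}$, and multiplicativity of $\chi$ over the product decomposition of the fixed locus).
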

\begin{proof}
    This follows from Theorem \ref{thm: qm and slant sums} and the virtual localization theorem.
\end{proof}

For any quiver variety $\qv$ and for any cocharacter $\sigma: \Cs \to \bT$, there is a bijection between torus fixed quasimaps and torus fixed twisted quasimaps. This bijection shifts the degree:
\[
\left(\qm^{d,\sigma }_{p}\right)^{\bT \times \Cs_{q}} \cong \left( \qm^{\tilde{d}}_{p} \right)^{\bT \times \Cs_{q}}
\]
where $\tilde{d}_{i}=d_{i}+\langle \lb_{i}|_{p},\sigma\rangle$ and $\langle \cdot, \cdot \rangle$ is the natural pairing between characters and cocharacters.

And the virtual tangent spaces at a torus fixed quasimap $f$ are related by
\begin{equation}\label{eq: tvir comparison}
\qmtvir^{\sigma}|_{f}- \qmtvir|_{f}=\frac{T^{\sigma}_p \qv- T_p \qv}{1-q}
\end{equation}
where $T^{\sigma}_p \qv$ is the tangent space of $\qv$ at $p$, viewed as a $\bT \times \Cs_{q}$ representation where the second factor acts by $\sigma$. As in \cite[\S 8.2]{pcmilect}, this leads to a relationship between the twisted and untwisted vertex functions.

To state it, it is convenient to define a few transcendental functions. Let $\Phi$ be the function defined on torus weights by $\Phi(x):=\prod_{i=0}^{\infty} (1-x q^{i})$ and extended to sums and differences of weights by multiplicativity. For a torus fixed point $p$ in a quiver variety, let $\Phi_{p}=\Phi((q-\hbar) T^{1/2}|_{p})$ and $\exppref_p=\exp\left(\ln(q)^{-1}\sum_{i} \ln(\lb_{i}|_{p}) \ln(z_{i})\right)$. 

\begin{proposition}\label{prop: twisted vertex}
    Let $\qv$ be a quiver variety and let $p \in \qv^{\bT}$. Let $\sigma: \Cs \to \bA$ be a cocharacter. The twisted and untwisted vertex functions are related by
    \[
\ver^{(\tau),\sigma}_{p} =\Phipref_{p}^{-1} \exppref_{p}^{-1}  \left( \Phipref_{p} \exppref_{p} \ver^{(\tau)}_{p}\right)\big|_{a=a q^{\sigma}}
    \]
\end{proposition}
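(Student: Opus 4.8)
The plan is to evaluate both sides by $\bT\times\Cs_{q}$-virtual localization on the relevant based quasimap spaces and to match the two resulting sums term by term, using exactly the two inputs recorded just before the statement: the degree-shifting bijection between fixed loci $(\qm^{\sigma,d}_{p})^{\bT\times\Cs_{q}}\cong(\qm^{\tilde d}_{p})^{\bT\times\Cs_{q}}$ with $\tilde d_{i}=d_{i}+\langle\lb_{i}|_{p},\sigma\rangle$, together with the comparison of virtual tangent spaces \eqref{eq: tvir comparison}. Writing $\ver^{(\tau),\sigma}_{p}(z)=\sum_{d}z^{d}\sum_{f}\mathrm{contr}^{\sigma}(f)$ as a sum over $\sigma$-twisted fixed quasimaps $f$ based at $p$, and similarly for $\ver^{(\tau)}_{p}$, I would use the bijection to reindex the twisted sum by the corresponding untwisted fixed quasimaps. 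What then remains is to check, coefficient by coefficient in $z$, that passing from the untwisted localization contribution to the twisted one is implemented by the operator $g\mapsto\Phipref_{p}^{-1}\exppref_{p}^{-1}\big(\Phipref_{p}\exppref_{p}\,g\big)\big|_{a=aq^{\sigma}}$. The $\exppref_{p}$-factor will account for the degree shift and the $\Phipref_{p}$-factor for the change in virtual tangent space.

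First I would dispose of the degree shift. Under $a\mapsto aq^{\sigma}$ every equivariant class built from the framing acquires $q$-powers: in particular each tautological class satisfies $\lb_{i}|_{p}\mapsto\lb_{i}|_{p}\,q^{\langle\lb_{i}|_{p},\sigma\rangle}$, hence $\ln(\lb_{i}|_{p})\mapsto\ln(\lb_{i}|_{p})+\langle\lb_{i}|_{p},\sigma\rangle\ln q$ and therefore
\[
\exppref_{p}\big|_{a=aq^{\sigma}}=\exppref_{p}\cdot\prod_{i}z_{i}^{\langle\lb_{i}|_{p},\sigma\rangle}.
\]
Thus conjugating by $\exppref_{p}$ and substituting $a\mapsto aq^{\sigma}$ multiplies the coefficient of a given power of $z$ by a monomial in the $z_{i}$ whose exponents are exactly $\langle\lb_{i}|_{p},\sigma\rangle$; this is precisely the reindexing of $z$-degrees $z^{d}\leftrightarrow z^{\tilde d}$ dictated by the bijection. (Matching the overall sign of this shift against the convention $\tilde d_{i}=d_{i}+\langle\lb_{i}|_{p},\sigma\rangle$ requires keeping track of whether one works with $\tb_{i}$ or $V_{i}$ and with $z_{i}$ or $z_{i}^{-1}$, which I would pin down at this stage.) With the degree shift absorbed, the only remaining discrepancy between the twisted and untwisted contributions is the one forced by \eqref{eq: tvir comparison}.

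The heart of the argument, and the step I expect to be the main obstacle, is to show that this remaining discrepancy is reproduced by the $\Phipref_{p}$-conjugation together with $a\mapsto aq^{\sigma}$. The symmetrized localization contribution of $\vrs$ at a fixed quasimap is multiplicative in the virtual tangent space, so for a matched pair the ratio of the twisted contribution to the untwisted one is the symmetrized weight of the difference class $\tfrac{T^{\sigma}_{p}\qv-T_{p}\qv}{1-q}$ of \eqref{eq: tvir comparison}; crucially this class, and hence the ratio, is independent of the quasimap. The key observation is that the difference class is \emph{finite}: writing $T^{\sigma}_{p}\qv-T_{p}\qv=\sum_{w}w\,(q^{\langle w,\sigma\rangle}-1)$ over the $\bT$-weights $w$ of $T_{p}\qv$, each summand is divisible by $1-q$, so the quotient is a genuine (signed) finite $\bT\times\Cs_{q}$-representation. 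Its symmetrized weight is therefore a finite rational function of $a,\hbar,q$, the same for every matched pair, and I would verify that it equals $\Phipref_{p}|_{a=aq^{\sigma}}/\Phipref_{p}$ using the elementary identity $\Phi(xq^{n})/\Phi(x)=\prod_{j=0}^{n-1}(1-xq^{j})^{-1}$ together with $T_{p}\qv=T^{1/2}|_{p}+\hbar^{-1}(T^{1/2}|_{p})^{\vee}$ and the fact that $a\mapsto aq^{\sigma}$ shifts each weight $w$ of $T^{1/2}|_{p}$ to $wq^{\langle w,\sigma\rangle}$. The delicacy is entirely in this last matching: one must reconcile the half-integral weight shifts and the regularized square-root and sign prefactors built into $\vrs$ and into the $(q-\hbar)T^{1/2}|_{p}$ defining $\Phipref_{p}$, so that the two finite expressions agree identically.

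Finally I would check the compatibility of the descendant insertion: $\ev_{0}^{*}(\tau)$ evaluates the tautological bundles at $0\in\mathbb{P}^{1}$, where the extra $\Cs_{q}$-weights picked up under untwisting are again governed by the degrees $\langle\lb_{i}|_{p},\sigma\rangle$, and these are exactly undone by the substitution $a\mapsto aq^{\sigma}$ inside $\ev_{0}^{*}(\tau)$. Combining the degree-shift computation of the second paragraph, the $\Phipref_{p}$-identity of the third, and this compatibility of $\tau$ then yields the stated formula; this is the adaptation to the present $\tau$- and $\sigma$-twisted setting of the twisted/untwisted comparison in \cite[\S 8.2]{pcmilect}.
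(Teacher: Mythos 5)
Your proposal follows essentially the same route the paper takes: the paper derives exactly the two ingredients you use (the degree-shifting bijection of fixed loci and the virtual tangent space comparison \eqref{eq: tvir comparison}) and then defers the remaining bookkeeping to \cite[\S 8.2]{pcmilect}, which is precisely the reindexing-plus-prefactor computation you outline, with $\exppref_{p}$ absorbing the degree shift and $\Phipref_{p}$ absorbing the finite difference class $\tfrac{T^{\sigma}_{p}\qv-T_{p}\qv}{1-q}$. The delicacies you flag (signs, square roots, and the descendant at $\ev_{0}$) are real but resolve as you expect, so the argument is correct and matches the paper's.
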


Combining Proposition \ref{prop: twisted vertex} and Theorem \ref{thm: vertex and slant sum 1}, we obtain the following.

\begin{theorem}\label{thm: vertex and slant sum 2}
With the notation as in Theorem \ref{thm: vertex and slant sum 1}, we have
    \begin{multline*}
\ver^{(\tau)}_{p}(z_1,z_2)= \\
\iota^{*}\left(\exppref_{p^{(2)}} \Phipref_{p^{(2)}}\right)^{-1} \sum_{F} \chi\left(F,\frac{(\ev_{0}^{*}(\tau_{1}) \otimes \vrs)|_{F}}{\extpow(N_{\vir}^{\vee}|_{F})} \right) z_1^{\deg F} \iota^{*}\left(\left(\exppref_{p^{(2)}} \Phipref_{p^{(2)}}\ver^{(\tau_{2})}_{p^{(2)}}\right)\big|_{a=a q^{\sigma_{F}}}\right) 
\end{multline*}
\end{theorem}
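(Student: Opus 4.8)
The plan is to combine the two preceding results mechanically, since Theorem \ref{thm: vertex and slant sum 2} is not an independent statement but rather the result of substituting the twisted-vertex identity of Proposition \ref{prop: twisted vertex} into the branching formula of Theorem \ref{thm: vertex and slant sum 1}. The only genuine content is bookkeeping: I must track how the equivariant parameters and the $\iota^*$-pullback interact with the substitution $a \mapsto a q^{\sigma_F}$. So the first step is to write down the conclusion of Theorem \ref{thm: vertex and slant sum 1} verbatim, which expresses $\ver^{(\tau)}_p(z_1,z_2)$ as a sum over fixed components $F$, each term carrying a Euler-class weight $\chi(F,\ldots)$, a monomial $z_1^{\deg F}$, and the twisted vertex $\iota^* \ver^{(\tau_2),\sigma_F}_{p^{(2)}}(z_2)$.

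\textbf{Second}, I would apply Proposition \ref{prop: twisted vertex} to rewrite each $\sigma_F$-twisted vertex in terms of the untwisted one. That proposition, applied on $\qv^{(2)}$ at the fixed point $p^{(2)}$ with cocharacter $\sigma_F$, gives
\[
\ver^{(\tau_2),\sigma_F}_{p^{(2)}} = \Phipref_{p^{(2)}}^{-1} \exppref_{p^{(2)}}^{-1} \left(\Phipref_{p^{(2)}} \exppref_{p^{(2)}} \ver^{(\tau_2)}_{p^{(2)}}\right)\big|_{a = a q^{\sigma_F}}.
\]
Applying $\iota^*$ to this equation and inserting it into the term for $F$ produces the integrand on the right-hand side of Theorem \ref{thm: vertex and slant sum 2}. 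The key point to verify is that the prefactor $\iota^*(\exppref_{p^{(2)}} \Phipref_{p^{(2)}})^{-1}$ can be pulled outside the sum over $F$: this is legitimate precisely because this factor does \emph{not} depend on $F$ (it is evaluated at the untwisted parameters $a$, before the substitution), whereas everything depending on $\sigma_F$ sits inside the restriction $\big|_{a=aq^{\sigma_F}}$. Thus the two occurrences of $\Phipref_{p^{(2)}} \exppref_{p^{(2)}}$ in Proposition \ref{prop: twisted vertex} play asymmetric roles: the outer (inverse) one factors out of the $F$-sum, while the inner one remains glued to $\ver^{(\tau_2)}_{p^{(2)}}$ under the parameter shift.

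\textbf{The main obstacle} is ensuring that the $\iota^*$-pullback commutes correctly with the substitution $a \mapsto a q^{\sigma_F}$. Here one must use the structure of $\iota$ established in Theorem \ref{thm: slant sum and fixed points} and recalled at the start of \S\ref{sec: vertex and slant sums}: the composition $\bT \xrightarrow{\iota} \bT^{(1)} \times \bT^{(2)} \xrightarrow{pr_2} \bT^{(2)}$ is surjective, and the $\bT^{(2)}$-equivariant parameters $a$ appearing in $\exppref_{p^{(2)}}$, $\Phipref_{p^{(2)}}$, and $\ver^{(\tau_2)}_{p^{(2)}}$ are, under $\iota^*$, identified with corresponding parameters of $\bT$. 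The cocharacter $\sigma_F$ lands in the framing torus $\bA^{(2)}$, so the twist $q^{\sigma_F}$ makes sense on both sides and the operations $\iota^*$ and $(\,\cdot\,)|_{a=aq^{\sigma_F}}$ commute; I would verify this by checking it on each monomial weight, where it reduces to the fact that pullback of characters is compatible with the cocharacter pairing $\langle \lb_i|_p, \sigma\rangle$ used to define the twist.

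\textbf{Finally}, collecting terms yields exactly the displayed multiline identity: the $F$-independent prefactor $\iota^*(\exppref_{p^{(2)}} \Phipref_{p^{(2)}})^{-1}$ is extracted to the front, and the remaining sum over $F$ has each summand given by the Euler-class weight $\chi(F, \cdot)$, the monomial $z_1^{\deg F}$, and the shifted untwisted vertex $\iota^*\big((\exppref_{p^{(2)}} \Phipref_{p^{(2)}} \ver^{(\tau_2)}_{p^{(2)}})|_{a=aq^{\sigma_F}}\big)$. Since every step is a substitution justified by an already-proven result, the proof is short; the entire burden is the commutativity check in the previous paragraph, and I would state it as the one nontrivial lemma to confirm before declaring the computation complete.
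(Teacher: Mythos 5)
Your proposal is correct and matches the paper's argument exactly: the paper derives Theorem \ref{thm: vertex and slant sum 2} simply by substituting Proposition \ref{prop: twisted vertex} into Theorem \ref{thm: vertex and slant sum 1}, with the $F$-independent prefactor pulled out of the sum. Your additional commutativity check of $\iota^*$ with the shift $a\mapsto aq^{\sigma_F}$ is a reasonable point of care that the paper leaves implicit.
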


\subsection{From branching to factorization}

We explore here a few consequences of Theorem \ref{thm: vertex and slant sum 2} when the vertex function of $\qv^{(2)}$ is independent of the twist and thus can be factored out of the sum over $F$. We will freely use the notation of Theorem \ref{thm: vertex and slant sum 2} in this section.

\begin{corollary}\label{cor: vertex slant sum factorization}
    Assume that 
    \begin{itemize}
        \item $\tau_{2}|_{p^{(2)}}$ and $T\qv^{(2)}|_{p^{(2)}}$ do not depend on the framing parameters of $W_{\sspt_{2}}$ 
        \item $\tb_{i}|_{p^{(2)}}$ is symmetric in the framing parameters of $W_{\sspt_{2}}$ for all $i \in Q^{(2)}_{0}$.
    \end{itemize}
    Then
    \[
\ver^{(\tau_1\otimes \tau_{2})}_{p}(z_1,z_2)=\ver^{(\tau_{1})}_{p^{(1)}}(z_{1}') \ver^{(\tau_2)}_{p^{(2)}}(z_2)
    \]
    where $z_{1,\sspt_{1}}'=z_{1,\sspt_{1}} \prod_{i \in Q^{(2)}_{0}} z_{2,i}^{\deg_{\sspt_{2}} \lb_{i}|_{p^{(2)}}}$  and $z_{1,j}'=z_{1,j}$ otherwise.
\end{corollary}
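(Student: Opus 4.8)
The plan is to feed the two hypotheses into the branching formula of Theorem~\ref{thm: vertex and slant sum 1}
\[
\ver^{(\tau)}_{p}(z_1,z_2)=\sum_{F}\chi\!\left(F,\frac{(\ev_{0}^{*}(\tau_{1})\otimes\vrs)|_{F}}{\extpow(N_{\vir}^{\vee}|_{F})}\right)z_1^{\deg F}\,\iota^{*}\ver^{(\tau_{2}),\sigma_{F}}_{p^{(2)}}(z_2)
\]
and to show that under them every twisted factor $\iota^{*}\ver^{(\tau_2),\sigma_F}_{p^{(2)}}$ degenerates to $\ver^{(\tau_2)}_{p^{(2)}}(z_2)$ times a monomial in $z_2$ whose $F$-dependence is only through $(\deg F)_{\sspt_1}$. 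Once this is in place, the monomial can be absorbed into $z_{1,\sspt_1}$ to produce the shifted variable $z_1'$, and the leftover sum over $F$ is recognized as a vertex of $\qv^{(1)}$.

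Step one: \emph{the twist contributes only a monomial}. Because $\sigma_F$ is a cocharacter of $\bA^{(2)}$ supported on $W_{\sspt_2}$ and, by the first hypothesis, $T\qv^{(2)}|_{p^{(2)}}$ does not involve the framing parameters of $W_{\sspt_2}$, we have $T^{\sigma_F}_{p^{(2)}}\qv^{(2)}=T_{p^{(2)}}\qv^{(2)}$ as $\bT^{(2)}\times\Cs_q$-representations; by \eqref{eq: tvir comparison} the twisted and untwisted virtual tangent spaces therefore coincide, and by the same hypothesis $\ev_0^{*}\tau_2$ is unaffected by the twist. Thus the twist reduces to the framing-degree shift of Theorem~\ref{thm: qm and slant sums}, and Proposition~\ref{prop: twisted vertex} applies with both $\ver^{(\tau_2)}_{p^{(2)}}$ and $\Phipref_{p^{(2)}}$ invariant under $a\mapsto aq^{\sigma_F}$, so that only the prefactor $\exppref_{p^{(2)}}$ transforms, by the monomial $\prod_{i\in Q^{(2)}_0}z_{2,i}^{\langle\lb_i|_{p^{(2)}},\sigma_F\rangle}$. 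Hence
\[
\ver^{(\tau_2),\sigma_F}_{p^{(2)}}(z_2)=\Big(\textstyle\prod_{i\in Q^{(2)}_0}z_{2,i}^{\langle\lb_i|_{p^{(2)}},\sigma_F\rangle}\Big)\,\ver^{(\tau_2)}_{p^{(2)}}(z_2).
\]
Applying $\iota^{*}$ leaves this monomial untouched (the $z_2$ are Kähler, not equivariant, variables), and since $\ver^{(\tau_2)}_{p^{(2)}}$ is free of the $W_{\sspt_2}$-framing parameters while $\iota$ collapses exactly those directions onto the ordered weights of $\tb^{(1)}_{\sspt_1}|_{p^{(1)}}$, we get $\iota^{*}\ver^{(\tau_2)}_{p^{(2)}}=\ver^{(\tau_2)}_{p^{(2)}}$. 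This lets us pull $\ver^{(\tau_2)}_{p^{(2)}}(z_2)$ out of the sum over $F$.

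Step two: \emph{assembling $z_1'$ and recognizing $\ver^{(\tau_1)}_{p^{(1)}}$}. Here the second hypothesis is decisive: the symmetry of $\tb_i|_{p^{(2)}}$ in the framing weights $a_1,\dots,a_n$ of $W_{\sspt_2}$ forces $\lb_i|_{p^{(2)}}$ to carry each $a_j$ with the same exponent, namely $\deg_{\sspt_2}\lb_i|_{p^{(2)}}$. Since the weights of $\sigma_F$ on $W_{\sspt_2}$ are the splitting degrees $(d_1,\dots,d_n)$ of $\qmtb_{\sspt_1}$, with $\sum_j d_j=(\deg F)_{\sspt_1}$, we obtain $\langle\lb_i|_{p^{(2)}},\sigma_F\rangle=\deg_{\sspt_2}\lb_i|_{p^{(2)}}\cdot(\deg F)_{\sspt_1}$, so the monomial equals $\big(\prod_{i}z_{2,i}^{\deg_{\sspt_2}\lb_i|_{p^{(2)}}}\big)^{(\deg F)_{\sspt_1}}$ and combines with $z_1^{\deg F}$ into exactly $(z_1')^{\deg F}$ for the $z_1'$ of the statement. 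Finally, virtual localization for the $\bT^{(1)}\times\Cs_q$-action on $\qm_{p^{(1)}}$ identifies $\sum_F\chi(\,\cdots)(z_1')^{\deg F}$ with $\ver^{(\tau_1)}_{p^{(1)}}(z_1')$, giving the asserted factorization. The main obstacle is Step one — confirming that, once $T^{\sigma_F}_{p^{(2)}}=T_{p^{(2)}}$, the symmetrized virtual structure sheaf of Proposition~\ref{prop: twisted vertex} contributes nothing beyond the $\exppref_{p^{(2)}}$-monomial, and fixing the sign conventions so that the exponent comes out as $+\deg_{\sspt_2}\lb_i|_{p^{(2)}}$ rather than its negative.
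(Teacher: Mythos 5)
Your proposal is correct and follows essentially the same route as the paper's own proof: feed the hypotheses into Theorem \ref{thm: vertex and slant sum 1}, use the first assumption together with \eqref{eq: tvir comparison} to reduce each twisted vertex to the untwisted one times the $\exppref_{p^{(2)}}$-monomial, use the symmetry assumption to make that monomial depend on $F$ only through $d_{\sspt_1}=(\deg F)_{\sspt_1}$, and absorb it into $z_{1,\sspt_1}$. The paper's argument is just a terser version of your two steps, with the same (implicit) treatment of the $\Phipref_{p^{(2)}}$ factor.
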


See the proof for explanation of the notation $\deg_{\sspt_{2}} \lb_{i}$.
\begin{proof}
    The first assumption combined with \eqref{eq: tvir comparison} implies that $\ver^{(\tau_{2}),\sigma}_{p^{(2)}}$ is independent of the twist $\sigma$, up to a monomial in the K\"ahler parameters. This monomial is accounted for by $\exppref_{p^{(2)}}$. 

    The second assumption implies that the degree of $\lb_{i}|_{p^{(2)}}$ as a Laurent monomial in $a_{\sspt_{2},j}$ is independent of $j$. So we can denote it by $\deg_{\sspt_{2}} \lb_{i}|_{p^{(2)}}$.
    
    Let $F$ be a fixed component and let $d_{\sspt_{1}}\in \mathbb{Z}$ be the degree of the $\qmtb_{\sspt_{1}}$ bundle on this component. Then
    \[
    \exppref_{p^{(2)}}|_{a=a q^{\sigma_{F}}} =\exppref_{p^{(2)}} \prod_{i \in Q^{(2)}_{0}} z_{2,i}^{d_{\sspt_{1}} \deg_{\sspt_{2}} \lb_{i}|_{p^{(2)}} }
    \]
    Clearly, this monomial can be absorbed into a shift of $z_{1,\sspt_{1}}$ in the vertex function of $\qv^{(1)}$.
\end{proof}

\begin{remark}\label{remark: M2 is 0-dim corollary}
    One can show that the hypotheses of Corollary \ref{cor: vertex slant sum factorization} are satisfied if $\qv^{(2)}$ is zero dimensional and $\tau_{2}=1$, see also Proposition \ref{prop: inductive factorization} below.
\end{remark}

Due to form of the obstruction theory, the $q=\hbar$ specialization of vertex functions with descendant $1$ depends on $\hbar$ and not on any other equivariant parameters.

\begin{corollary}\label{cor: CY vertex slant sum factorization}
Assume that
\begin{itemize}
    \item  $\tb_{i}|_{p^{(2)}}$ is symmetric in the framing parameters of $W_{\sspt_{2}}$ for all $i\in Q^{(2)}_{0}$
    \item $\tau_{2}|_{p^{(2)}}$ is independent of the framing parameters of $W_{\sspt_{2}}$
\end{itemize}
Then
    \[
\ver^{(\tau_1 \otimes \tau_{2})}_{p}(z_1,z_2)\big|_{q=\hbar}=\ver^{(\tau_{1})}_{p^{(1)}}(z_1')\big|_{q=\hbar} \ver^{(\tau_{2})}_{p^{(2)}}(z_2)\big|_{q=\hbar}
    \]
    where $z_1'$ is as in Corollary \ref{cor: vertex slant sum factorization}.
\end{corollary}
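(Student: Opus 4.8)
The plan is to specialize the branching identity of Theorem \ref{thm: vertex and slant sum 2} at $q=\hbar$ and argue that the two obstructions to clean factorization---the transcendental prefactors $\Phi_{p^{(2)}}$ and the $\sigma_F$-dependence of the inner vertex---disappear. The first input is the identity $\Phi_{p^{(2)}}\big|_{q=\hbar}=1$. Writing $T^{1/2}|_{p^{(2)}}=\sum_k \chi_k$ as a virtual sum of $\bT^{(2)}$-characters, the definition $\Phi_{p^{(2)}}=\Phi\big((q-\hbar)T^{1/2}|_{p^{(2)}}\big)$ and multiplicativity give $\Phi_{p^{(2)}}=\prod_k \Phi(q\chi_k)/\Phi(\hbar\chi_k)$; since $\Phi(q\chi_k)=\prod_{i\geq 1}(1-\chi_k q^{i})$ and $\Phi(\hbar\chi_k)=\prod_{i\geq 0}(1-\chi_k\hbar q^{i})$ coincide when $q=\hbar$, every factor is $1$. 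The same computation applies after the substitution $a\mapsto aq^{\sigma_F}$, which only multiplies each $\chi_k$ by a framing-independent power of $q$. Thus both occurrences of $\iota^{*}\Phi_{p^{(2)}}$ in Theorem \ref{thm: vertex and slant sum 2} drop out once we set $q=\hbar$.

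The second, and essential, input is that $\ver^{(\tau_2)}_{p^{(2)}}(z_2)\big|_{q=\hbar}$ is independent of the framing parameters $a_{\sspt_2,j}$ of $W_{\sspt_2}$. Granting this, Proposition \ref{prop: twisted vertex} together with $\Phi_{p^{(2)}}\big|_{q=\hbar}=1$ yields $\ver^{(\tau_2),\sigma_F}_{p^{(2)}}\big|_{q=\hbar}=\exppref_{p^{(2)}}^{-1}\big(\exppref_{p^{(2)}}\,\ver^{(\tau_2)}_{p^{(2)}}\big)\big|_{a=aq^{\sigma_F}}\big|_{q=\hbar}$, and framing-independence forces the substitution $a\mapsto aq^{\sigma_F}$ to act trivially on $\ver^{(\tau_2)}_{p^{(2)}}\big|_{q=\hbar}$, leaving only the monomial produced by the two $\exppref_{p^{(2)}}$ factors. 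To establish the framing-independence itself I would combine two facts: the structure-sheaf part of every $\bT^{(2)}\times\Cs_q$-fixed contribution becomes framing-independent at $q=\hbar$ by the feature of the obstruction theory recalled before the statement (the descendant-$1$ case), and the descendant insertion $\ev_0^{*}(\tau_2)|_f$ evaluates $\tau_2$ on the fibers $\qmtb_i|_0$, whose $\bT^{(2)}$-weights agree with those of $\tb_i|_{p^{(2)}}$ and differ only by framing-independent powers of $q$; the hypothesis that $\tau_2|_{p^{(2)}}$ is framing-independent is what propagates this to the insertion.

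With these two inputs in hand the argument closes exactly as in Corollary \ref{cor: vertex slant sum factorization}. The ratio $\exppref_{p^{(2)}}^{-1}\,\exppref_{p^{(2)}}\big|_{a=aq^{\sigma_F}}$ equals the monomial $\prod_{i\in Q^{(2)}_0} z_{2,i}^{\,d_{\sspt_1}\deg_{\sspt_2}\lb_i|_{p^{(2)}}}$, where $d_{\sspt_1}$ is the degree of $\qmtb_{\sspt_1}$ along $F$, and the first hypothesis---symmetry of each $\tb_i|_{p^{(2)}}$ in the $a_{\sspt_2,j}$---is precisely what makes $\deg_{\sspt_2}\lb_i|_{p^{(2)}}$ well-defined. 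Substituting back into Theorem \ref{thm: vertex and slant sum 2} at $q=\hbar$, the framing-independent factor $\ver^{(\tau_2)}_{p^{(2)}}\big|_{q=\hbar}$ pulls out of the sum over $F$, the monomial in $z_2$ is absorbed into the variable $z_{1,\sspt_1}$ to produce $z_1'$, and the remaining sum $\sum_F\chi\big(F,(\ev_0^{*}(\tau_1)\otimes\vrs)|_F/\extpow(N_{\vir}^{\vee}|_F)\big)(z_1')^{\deg F}$ is, by virtual localization, exactly $\ver^{(\tau_1)}_{p^{(1)}}(z_1')\big|_{q=\hbar}$. This gives the asserted factorization.

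I expect the genuine difficulty to lie in the framing-independence claim of the middle paragraph: verifying that inserting $\ev_0^{*}(\tau_2)$ does not reintroduce the framing dependence that the $q=\hbar$ obstruction-theory cancellation removes in the descendant-$1$ case. The subtlety is that distinct tautological summands may acquire distinct degrees along a fixed locus, so that the framing parameters enter $\ev_0^{*}(\tau_2)|_f$ through $q$-shifted weights rather than through the weights of $\tb_i|_{p^{(2)}}$ directly; confirming that framing-independence of $\tau_2|_{p^{(2)}}$ nonetheless survives---either term by term or after summing over a symmetric family of fixed loci, where the symmetry hypothesis on the $\tb_i|_{p^{(2)}}$ would be used a second time---is the step requiring the most care.
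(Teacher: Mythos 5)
Your proposal matches the paper's intended argument: the paper gives no explicit proof of this corollary, relying only on the remark immediately preceding it (that the $q=\hbar$ specialization of the descendant-$1$ vertex is framing-independent) together with the mechanism of Corollary \ref{cor: vertex slant sum factorization}, which is exactly the route you take --- kill $\Phipref_{p^{(2)}}$ at $q=\hbar$, deduce twist-independence of the inner vertex up to the $\exppref_{p^{(2)}}$-monomial, and absorb that monomial into $z_1'$. The subtlety you flag at the end (that $\ev_0^{*}(\tau_2)$ sees $q$-shifted weights along each fixed locus and could in principle reintroduce framing dependence) is genuine and is not addressed in the paper either, so your writeup is if anything more careful than the source.
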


Another interesting special case occurs when $\dw^{(2)}_{\sspt_2}=1$ and $\dw^{(2)}_{i}=0$ otherwise, which most closely resembles the original slant sum constructions of \cite{proctor}. 

\begin{corollary}
     Assume that $\dw^{(2)}_{\sspt_2}=1$ and $\dw^{(2)}_{i}=0$ otherwise. Then
    \[
\ver^{(\tau_1\otimes \tau_{2})}_{p}(z_1,z_2)=\ver^{(\tau_{1})}_{p^{(1)}}(z_{1}') \ver^{(\tau_2)}_{p^{(2)}}(z_2)
    \]
   where $z_{1,\sspt_{1}}'=z_{1,\sspt_{1}} q^{\deg_{\sspt_{2}} \tau_{2}|_{p^{(2)}}}\prod_{i \in Q^{(2)}_{0}} z_{2,i}^{\deg_{\sspt_{2}} \lb_{i}|_{p^{(2)}}}$ and $z_{1,j}'=z_{1,j}$ otherwise.
\end{corollary}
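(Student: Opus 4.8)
The plan is to run the proof of Corollary \ref{cor: vertex slant sum factorization} one notch finer: instead of \emph{assuming} the twisted vertex of $\qv^{(2)}$ is twist-independent, I will show that when $\dw^{(2)}_{\sspt_2}=1$ and all other $\dw^{(2)}_i=0$ the twist $\sigma_F$ changes $\ver^{(\tau_2)}_{p^{(2)}}$ only by an explicit monomial, and then absorb that monomial into $z_1$. Concretely, starting from Theorem \ref{thm: vertex and slant sum 1}, it suffices to prove
\[
\ver^{(\tau_2),\sigma_F}_{p^{(2)}}(z_2)=q^{\,d_1\deg_{\sspt_2}\tau_2|_{p^{(2)}}}\Big(\prod_{i\in Q^{(2)}_0}z_{2,i}^{\,d_1\deg_{\sspt_2}\lb_i|_{p^{(2)}}}\Big)\,\ver^{(\tau_2)}_{p^{(2)}}(z_2),
\]
where $d_1=(\deg F)_{\sspt_1}$ is the degree of $\qmtb_{\sspt_1}$ on the component $F$, so that $\sigma_F$ acts on the single framing coordinate $a:=a_{\sspt_2,1}$ by $a\mapsto aq^{d_1}$. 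Granting this, apply $\iota^*$ and insert into Theorem \ref{thm: vertex and slant sum 1}: since $z_1^{\deg F}$ carries the factor $z_{1,\sspt_1}^{d_1}$, the two monomials above combine with $z_1^{\deg F}$ into $(z_1')^{\deg F}$ for precisely the shift $z_1'$ in the statement, and the surviving sum $\sum_F\chi\big(F,\tfrac{(\ev_0^*\tau_1\otimes\vrs)|_F}{\extpow(N_{\mathrm{vir}}^\vee|_F)}\big)(z_1')^{\deg F}$ is exactly the localization formula for $\ver^{(\tau_1)}_{p^{(1)}}(z_1')$, while $\iota^*\ver^{(\tau_2)}_{p^{(2)}}$ factors out.

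The technical heart is a homogeneity statement forced by the one-dimensional framing. Here $\bA^{(2)}=\Cs$ with single coordinate $a$, and because $W_{\sspt_2}$ is one-dimensional with no other framings, scaling $a$ agrees on $\mathbf{M}(\dv^{(2)},\dw^{(2)})$ with the central gauge scaling by the diagonal $\Cs\subset G_{\dv^{(2)}}$; equivalently, by Lemma \ref{lem: wght-spces} every $\bT^{(2)}$-weight of $\tb_i|_{p^{(2)}}$ has the form $a\hbar^{m}$, $m\in\mathbb{Z}$, hence $a$-degree exactly $+1$, for every $i\in Q^{(2)}_0$. Consequently each summand of the canonical polarization — namely $\Hom(\tb_{t(e)},\tb_{h(e)})$, the single framing term $\Hom(\tbw_{\sspt_2},\tb_{\sspt_2})$, and $\Hom(\tb_i,\tb_i)$ — is balanced of $a$-degree $0$, so $T^{1/2}|_{p^{(2)}}$, and by the same token the quasimap class $\qmtvir$ and the bundles $\qmpol$ entering $\vrs$, are independent of $a$. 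In particular $\Phipref_{p^{(2)}}=\Phipref\big((q-\hbar)T^{1/2}|_{p^{(2)}}\big)$ is $a$-free. On the other hand $\tau_2$ is a symmetric function of the Chern roots of the $\tb_i$, each of which specializes to some $a\hbar^{m}$, so $\ev_0^*(\tau_2)$ contributes a uniform $a$-degree $\deg_{\sspt_2}\tau_2|_{p^{(2)}}$ on every fixed component of $\qm_{p^{(2)}}$, whence each $z_2^d$-coefficient of $\ver^{(\tau_2)}_{p^{(2)}}$ is $a$-homogeneous of that same degree.

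With these two facts the displayed identity is immediate from Proposition \ref{prop: twisted vertex}: since $\Phipref_{p^{(2)}}$ is $a$-free it is unchanged by $a\mapsto aq^{d_1}$ and cancels; the $\exppref_{p^{(2)}}$ factor produces $\prod_i z_{2,i}^{\,d_1\deg_{\sspt_2}\lb_i|_{p^{(2)}}}$ exactly as in the proof of Corollary \ref{cor: vertex slant sum factorization} (using $\langle\lb_i|_{p^{(2)}},\sigma_F\rangle=d_1\deg_{\sspt_2}\lb_i|_{p^{(2)}}$); and the $a$-homogeneity of $\ver^{(\tau_2)}_{p^{(2)}}$ turns $a\mapsto aq^{d_1}$ into multiplication by $q^{\,d_1\deg_{\sspt_2}\tau_2|_{p^{(2)}}}$.

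I expect the one genuine subtlety to be the descendant bookkeeping. First, one must check that the $a$-degree of $\ev_0^*(\tau_2)$ is constant across the whole $\bT^{(2)}\times\Cs_q$-fixed locus of $\qm_{p^{(2)}}$, not merely at $p^{(2)}$; this is where Lemma \ref{lem: wght-spces} is applied at the $\ev_0$-image rather than at $p^{(2)}$. Second, one must read $\deg_{\sspt_2}\tau_2|_{p^{(2)}}$ as the $a$-homogeneity degree of $\tau_2|_{p^{(2)}}$: unlike $\lb_i|_{p^{(2)}}=\det\tb_i|_{p^{(2)}}$, which is always a single monomial and so has a well-defined $a$-degree, a general descendant $\tau_2|_{p^{(2)}}$ is $a$-homogeneous only when $\tau_2$ is isobaric in the Chern roots (e.g.\ a monomial), so this plays here the role that the symmetry hypothesis on $\tb_i|_{p^{(2)}}$ played for $\deg_{\sspt_2}\lb_i$ in Corollary \ref{cor: vertex slant sum factorization}, and I read it as implicit in the notation; for a non-isobaric $\tau_2$ one simply splits it into homogeneous pieces and sums. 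Beyond this, everything reduces to the same absorption of monomials into $z_1'$ already carried out in Corollary \ref{cor: vertex slant sum factorization}.
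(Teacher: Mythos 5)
Your proposal is correct and follows essentially the same route as the paper, which proves this corollary by rerunning the argument of Corollary \ref{cor: vertex slant sum factorization} and absorbing the extra $a$-dependence of $\tau_{2}|_{p^{(2)}}$ into the shift of $z_{1,\sspt_{1}}$. Your write-up is in fact more explicit than the paper's on one point it leaves implicit, namely that $\deg_{\sspt_{2}}\tau_{2}|_{p^{(2)}}$ is only well defined when $\tau_{2}$ is isobaric in the Chern roots (otherwise one decomposes into homogeneous pieces), and your verification via Lemma \ref{lem: wght-spces} that every weight of $\tb_{i}|_{p^{(2)}}$ has $a$-degree exactly one is the correct justification.
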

\begin{proof}
    The proof is similar to Corollary \ref{cor: vertex slant sum factorization}, the only difference being the possible dependence of $\tau_{2}|_{p^{(2)}}$ on the framing parameter of $W_{\sspt_{2}}$. This is accounted for in the shift for $z_1'$.
\end{proof}

\section{Consequences}\label{sec: applications}

\subsection{Factorization in zero-dimensional case}\label{sec: 0 dim varieties}

In this section, we will follow standard notation and denote simple (co)roots by $\alpha_i$ $(\alpha_i^\vee$) and fundamental (co)weights by $\fundwt_i$ ($\fundwt_i^\vee)$. We will also denote $\kappa=q/\hbar$.

Corollary \ref{cor: vertex slant sum factorization} provides a strategy to approach Conjecture \ref{conj: 0 dimensional vertex}. This conjecture is known only for type $A$ quiver varieties, proven in specific cases in \cite{dinkms1} and \cite{dinksmir2}, and later proven for all type $A$ in \cite{bottadink}, and type $D$ quiver varieties with framings only at minuscule vertices in \cite{dinkjang}. 

For each of these cases, the difficult part is proving that the vertex function factorizes to a certain explicit product of $q$-binomial functions. Then it is a relatively straightforward exercise in root combinatorics to identify the terms in the factorization with certain roots as in Conjecture \ref{conj: 0 dimensional vertex}. This is demonstrated by the following example.

\begin{example}\label{ex: (2,2) partition}
    Consider the $A_{3}$ quiver variety $\qv$ with $\dv=(1,2,1)$, $\dw=(0,1,0)$, and $\theta=(1,1,1)$. The vertex function (normalized as in \eqref{eq: msver}) can be computed by localization, which leads to the following formula:
\begin{align*}
\msver_{\qv}(z) &=\sum_{d_{i,j}}  \left(\kappa\right)^{N(d)} z_1^{d_{1,1}} z_{2}^{d_{2,1}+d_{2,2}} z_{3}^{d_{3,1}} \frac{\left(\hbar\right)_{d_{2,1}} \left(\hbar^{2}\right)_{d_{2,2}}}{\left(q\right)_{d_{2,1}} \left(q \hbar\right)_{d_{2,2}}} \cdot \\ 
&\frac{\left(\frac{q}{\hbar} \right)_{d_{2,1}-d_{2,2}} \left(q \hbar \right)_{d_{2,2}-d_{2,1}}}{\left(1 \right)_{d_{2,1}-d_{2,2}} \left( \hbar^{2} \right)_{d_{2,2}-d_{2,1}}} \cdot 
\frac{\left(\hbar \right)_{d_{2,2}-d_{1,1}} \left(1 \right)_{d_{2,1}-d_{1,1}} \left(\hbar \right)_{d_{3,1}-d_{2,1}} \left(1 \right)_{d_{3,1}-d_{2,2}}}{\left(q \right)_{d_{2,2}-d_{1,1}} \left(\frac{q}{\hbar} \right)_{d_{2,1}-d_{1,1}} \left(q \right)_{d_{3,1}-d_{2,1}} \left(\frac{q}{\hbar} \right)_{d_{3,1}-d_{2,2}}} 
\end{align*}
where the sum runs over the indices $d_{1,1}$, $d_{2,1}$, $d_{2,2}$, and $d_{3,1}$ that form a reverse plane partition over the Young diagram of $\lambda=(2,2)$ and $N(d)=-2 d_{1,1}+d_{2,1}+d_{2,2}+2 d_{3,1}$. Explicitly, the constraints are $0 \leq d_{2,1} \leq d_{1,1} \leq d_{2,2}$ and $d_{2,1} \leq d_{3,1} \leq d_{2,2}$.

Although it is not obvious from the formula, the vertex function factorizes as
\[
\msver_{\qv}(z) =\frac{\Phi\left(\hbar \kappa z_2 \right) \Phi(\hbar z_1 z_2) \Phi(\hbar \kappa^2 z_2 z_3  ) \Phi(\hbar \kappa z_1 z_2 z_3 )}{\Phi( \kappa z_2 ) \Phi( z_1 z_2) \Phi( \kappa^2 z_2 z_3 ) \Phi( \kappa z_1 z_2 z_3)}
\]

The coweights $\lambda$ and $\mu$ in this case are $\lambda=\fundwt_{2}^\vee$ and $\mu=-\fundwt_{2}^\vee$. It is easy to see that
\[
\langle\alpha,\mu\rangle =
\begin{cases}
1 & \text{if $\alpha \in \{-\alpha_{2},-\alpha_{1}-\alpha_{2},\alpha_{2}-\alpha_{3},\alpha_{1}-\alpha_{2}-\alpha_{3}\}$} \\
\geq 0 & \text{otherwise}
\end{cases}
\]
for any negative root $\alpha$. Recalling that $e^{-\alpha_1}=\kappa^{-1} z_1^{-1}$, $e^{-\alpha_2}=\kappa z_{2}^{-1}$, and $e^{-\alpha_3}=\kappa z_{3}^{-1}$, and that the vertex $\msver_\qv$ is evaluated at $z^{-1}$, this confirms Conjecture \ref{conj: 0 dimensional vertex} in this case.
\end{example}

Suppose that $\qv^{(1)}$ and $\qv^{(2)}$ are zero-dimensional quiver varieties whose vertex functions are known to factorize into a product of $q$-binomial functions. By the discussion above, they could be any type $A$ quiver varieties or type $D$ quiver varieties with framings only at minuscule vertices. Suppose that $\sspt_{1}$ and $\sspt_{2}$ are compatible vertices, that $\qv^{(1)}$ is split over $\sspt_{1}$, and that $\theta^{(2)}=\pm(1,1,\ldots,1)$. Consider $\qv:=\qv^{(1)}{}_{\sspt_{1}} \slantsum_{\sspt_{2}} \qv^{(2)}$.

With these assumptions, Corollary \ref{cor: vertex slant sum factorization} implies the following, see also Remark \ref{remark: M2 is 0-dim corollary}.
\begin{proposition}\label{prop: inductive factorization}
    The vertex function of $\qv$ factorizes into a product of $q$-binomial functions.
\end{proposition}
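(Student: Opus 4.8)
The plan is to apply Corollary~\ref{cor: vertex slant sum factorization} with $\tau_1=\tau_2=1$ and then to observe that the only nontrivial operation appearing there---the monomial reparametrization $z_1\mapsto z_1'$---preserves the class of finite products of $q$-binomial functions. So the factorization of $\ver_{\qv}$ will follow formally from the (assumed) factorizations of $\ver_{p^{(1)}}$ and $\ver_{p^{(2)}}$.

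First I would verify the two hypotheses of Corollary~\ref{cor: vertex slant sum factorization}. Since $\theta^{(2)}=\pm(1,\dots,1)$ is generic, Proposition~\ref{prop: generic theta} shows $\qv^{(2)}$ is connected; being zero-dimensional, it is therefore the single reduced point $p^{(2)}$, and $\tau_2=1$ is the only available descendant. In particular $T\qv^{(2)}|_{p^{(2)}}=0$ and $\tau_2|_{p^{(2)}}=1$, both trivially independent of the framing parameters of $W_{\sspt_2}$, which settles the first hypothesis.

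The one point requiring genuine care is the symmetry of $\tb_i|_{p^{(2)}}$ in the framing parameters $a_{\sspt_2,1},\dots,a_{\sspt_2,n}$ of $W_{\sspt_2}$, and this is where I expect the only real work. I would establish it by an automorphism argument: permuting a weight basis of $W_{\sspt_2}$ by $\sigma\in S_n$ is an automorphism of the framed quiver data, so it induces an automorphism of $\qv^{(2)}$ intertwining the $\bT^{(2)}$-action with its twist by the corresponding permutation of the $a_{\sspt_2,k}$. As $\qv^{(2)}$ is a single point, this automorphism fixes $p^{(2)}$; hence the $\bT^{(2)}$-character of each fiber $\tb_i|_{p^{(2)}}$ is invariant under permuting the $a_{\sspt_2,k}$, i.e. symmetric in the framing parameters of $W_{\sspt_2}$. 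This gives the second hypothesis, so Corollary~\ref{cor: vertex slant sum factorization} applies and yields
\[
\ver_{p}(z_1,z_2)=\ver_{p^{(1)}}(z_1')\,\ver_{p^{(2)}}(z_2),
\]
with $z_1'$ obtained from $z_1$ by multiplying $z_{1,\sspt_1}$ by the Laurent monomial $\prod_{i\in Q^{(2)}_0}z_{2,i}^{\deg_{\sspt_2}\lb_i|_{p^{(2)}}}$.

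Finally I would conclude by a closure observation. By assumption each of $\ver_{p^{(1)}}$ and $\ver_{p^{(2)}}$ is a finite product of $q$-binomial factors $\Phi(\hbar c)/\Phi(c)$ with each $c$ a monomial in the equivariant and K\"ahler variables. The substitution $z_{1,\sspt_1}\mapsto z_{1,\sspt_1}\prod_i z_{2,i}^{\deg_{\sspt_2}\lb_i|_{p^{(2)}}}$ sends monomials to monomials, hence carries each factor $\Phi(\hbar c)/\Phi(c)$ to another of the same shape; thus $\ver_{p^{(1)}}(z_1')$ remains such a product, and so does the displayed product. The same remark shows independence of normalization: passing from $\ver_{\qv}$ to $\msver_{\qv}$ as in \eqref{eq: msver} is another monomial shift of the $z_i$ and therefore also preserves the product-of-$q$-binomials form. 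This completes the argument.
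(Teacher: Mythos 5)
Your proposal is correct and follows the same route as the paper, which simply invokes Corollary \ref{cor: vertex slant sum factorization} (with the hypotheses asserted, in the remark following that corollary, to hold when $\qv^{(2)}$ is zero-dimensional and $\tau_2=1$) together with the observation that the monomial shift $z_1\mapsto z_1'$ preserves products of $q$-binomial factors. Your permutation-automorphism argument for the symmetry of $\tb_i|_{p^{(2)}}$ in the framing parameters of $W_{\sspt_2}$ supplies a detail the paper leaves unproved, and it is sound.
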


As remarked above, proving this factorization is difficult. Matching the terms with the roots in Conjecture \ref{conj: 0 dimensional vertex} for $\qv$ is now reduced to a combinatorial exercise.

Consider the following example.

\begin{example}
    Let $\qv^{(1)}$ be the type $A_3$ quiver variety with $\dv^{(1)}=(1,2,1)$ and $\dw^{(1)}=(0,1,0)$. Let $\qv^{(2)}$ be the type $A_1$ quiver variety with $\dv^{(2)}=(2)$ and $\dw^{(2)}=(2)$. Let $\sspt_{1}$ be the second vertex and let $\sspt_{2}$ be the first (and only) vertex. As a $\bT^{(1)}$-representation, $\tb_{\sspt_{1}}=1+\hbar$, so $\qv^{(1)}$ is split over $\sspt_{1}$.
    
    Let $\qv=\qv^{(1)}{}_{\sspt_{1}} \slantsum_{\sspt_{2}} \qv^{(2)}$. It is a $D_4$ quiver variety with $\dv=(1,2,1,2)$ and $\dw=(0,1,0,0)$. Note that the framing is not minuscule, so this $D_4$ quiver variety is not treated in \cite{dinkjang}. In agreement with our choice of labeling of vertices, we will use $z_1,z_2,z_3$ (resp. $z_4$) for the K\"ahler variables of $\qv^{(1)}$ (resp. $\qv^{(2)}$).

In pictures, we have
    \[
\begin{tikzpicture}[->,>=Stealth,thick,node distance=1.8cm]

  \node[circle,minimum size=0.6cm,draw] (V1) {$1$};
  \node[circle,draw,minimum size=0.6cm, right of= V1] (V2){$2$}; 
    \node[circle,draw,minimum size=0.6cm, right of= V2] (V3){$1$}; 
      \node[rectangle,minimum size=0.6cm, draw,below of=V2] (W1) {$1$};
  \draw[->] (W1) -- (V2);
   \draw[->] (V1) -- (V2);
    \draw[->] (V2) -- (V3);
    
\node[right =0.2cm of V3] (ss){$\slantsum$}; 

  \node[circle,draw,minimum size=0.6cm,right = 0.2cm of ss] (V4) {$2$};
    \node[rectangle,minimum size=0.6cm, draw,below of=V4] (W2) {$2$};
   \draw[->] (W2) -- (V4);

 \node[right=0.8 of V4] (EE) {$=$};
 \node[circle,draw,minimum size=0.6cm, right=0.7 of EE] (V5){$1$};
 \node[circle,draw,minimum size=0.6cm, right of =V5] (V6){$2$};
 \node[circle,draw,minimum size=0.6cm, above right of =V6] (V7){$1$};
 \node[circle,draw,minimum size=0.6cm, below right of =V6] (V8){$2$};
  \node[rectangle,minimum size=0.6cm, draw,below of=V6] (W3) {$1$};

\draw[->] (W3) -- (V6);
   \draw[->] (V5) -- (V6);
    \draw[->] (V6) -- (V7);
      \draw[->] (V6) -- (V8);
 



\end{tikzpicture}
\]
The vertex function of $\qv^{(1)}$ was written in Example \ref{ex: (2,2) partition}. Using localization, one sees that the vertex function of $\qv^{(2)}$ is
    \[
\msver_{\qv^{(2)}}(z)=\sum_{d_1, d_2 \geq 0} \left( \kappa^{2} z_4 \right)^{d_1+d_2} \left(\prod_{i,j=2}^{2}\frac{\left(\hbar \frac{a_{i}}{a_{j}}\right)_{d_{i}}}{\left(q \frac{a_{i}}{a_{j}}\right)_{d_{i}}}\right) \left(\prod_{i,j=1}^{2}\frac{\left(q \frac{a_{i}}{a_{j}}\right)_{d_{i}-d_{j}}}{\left(\hbar \frac{a_{i}}{a_{j}}\right)_{d_{i}-d_{j}}} \right)
    \]
see, for example \cite{dinkms1}. It is not at all clear from the formula that $\ver_{\qv^{(2)}}$ in independent of $a_{1}$ and $a_{2}$. Nevertheless, it was shown in \cite{dinkms1} that 
    \[
\msver_{\qv^{(2)}}(z)=\frac{\Phi\left(\hbar \kappa^{2}  z_4\right) \Phi\left(\hbar \kappa z_4  \right)}{\Phi\left(\kappa^{2} z_4 \right) \Phi\left(\kappa z_4  \right)}
    \]
    in agreement with Conjecture \ref{conj: 0 dimensional vertex}. This is equivalent to the statement that the Macdonald-Ruijsenaars operators of row type act diagonally on the Macdonald polynomial for the empty partition with a certain eigenvalue, see \cite{NoumiSano}.

By Corollary \ref{cor: vertex slant sum factorization} and accounting for the various normalizations, we obtain

\[
\msver_{\qv}(z)= \frac{\Phi\left(\hbar \kappa^2 z_2 z_4 \right) \Phi(\hbar \kappa z_1 z_2 z_4) \Phi(\hbar \kappa^3 z_2 z_3 z_4 ) \Phi(\hbar \kappa^2 z_1 z_2 z_3 z_4 )}{\Phi( \kappa^2 z_2 z_4 ) \Phi( z_1 z_2 z_4 q/\hbar) \Phi( \kappa^3 z_2 z_3 z_4) \Phi(\kappa^2 z_1 z_2 z_3 z_4 )} 
\cdot \frac{\Phi\left(\hbar \kappa^2 z_4  \right) \Phi\left(\hbar \kappa z_4  \right)}{\Phi\left( \kappa^2 z_4 \right) \Phi\left(\kappa z_4 \right)}
\]
Associated to $\qv$ are the coweights $\lambda=\fundwt_{2}^\vee$ and $\mu=\fundwt_{2}^\vee-2\fundwt_{4}^\vee$. Using the explicit construction of the $D_{4}$ root system, it is easy to calculate that
\[
\langle\alpha,\mu\rangle=
\begin{cases}
    2 & \alpha=-\alpha_{4} \\
    1 & \alpha \in \{-\alpha_2-\alpha_4,-\alpha_1-\alpha_2-\alpha_4,-\alpha_2-\alpha_3-\alpha_4,-\alpha_1-\alpha_2-\alpha_3-\alpha_4\} \\
    \leq 0 & \text{otherwise}
\end{cases}
    \]
for any negative root $\alpha$. So we again confirm Conjecture \ref{conj: 0 dimensional vertex}.
\end{example}

We provide one more example of our slant sum constructions where the corresponding Kac-Moody algebra is of indefinite type.

\begin{example}
  Let $\qv^{(1)}$ be the type $D$ quiver variety with $\dv^{(1)}=(1,1,1,2,3,2,2)$ and $\dw^{(1)}=(0,0,\dots,1)$:

      \[
\begin{tikzpicture}[->,>=Stealth,thick,node distance=1.8cm]

 \node[circle,draw,minimum size=0.6cm] (V1){$1$};
 \node[circle,draw,minimum size=0.6cm, right of =V1] (V2){$1$};
  \node[circle,draw,minimum size=0.6cm, right of =V2] (V3){$1$};
   \node[circle,draw,minimum size=0.6cm, right of =V3] (V4){$2$};
    \node[circle,draw,minimum size=0.6cm, right of =V4] (V5){$3$};
 \node[circle,draw,minimum size=0.6cm, above right of =V5] (V6){$2$};
 \node[circle,draw,minimum size=0.6cm, below right of =V5] (V7){$2$};

  \node[rectangle,minimum size=0.6cm, draw,below of=V7] (W1) {$1$};

\draw[->] (W1) -- (V7);
   \draw[->] (V1) -- (V2);
      \draw[->] (V2) -- (V3);
         \draw[->] (V3) -- (V4);
            \draw[->] (V4) -- (V5);
                  \draw[->] (V5) -- (V6);
   \draw[->] (V5) -- (V6);
    \draw[->] (V5) -- (V7);

\end{tikzpicture}
\]


This is an example of a minuscule framing. Let $\sspt_{1,1}$ be the unframed spin node and $\sspt_{1,2}$ be the framed spin node. 

Let $\qv^{(2)}$ have a single vertex and no loops, with $\dv^{(2)}=(2),\dw^{(2)}=(2)$, and let $\sspt_2$ be this single vertex. Let $\qv^{(3)}$ be another copy of this same quiver variety and let $\sspt_{3}$ be the vertex.

Consider the quiver variety 

\[
\qv:=\left(\qv^{(1)}{}_{\sspt_{1,1}}\slantsum_{\sspt_2}\qv^{(2)}\right){}_{\sspt_{1,2}}\slantsum_{\sspt_3}\qv^{(3)}
\]
This looks like 

     \[
\begin{tikzpicture}[->,>=Stealth,thick,node distance=1.8cm]

 \node[circle,draw,minimum size=0.6cm] (V1){$1$};
 \node[circle,draw,minimum size=0.6cm, right of =V1] (V2){$1$};
  \node[circle,draw,minimum size=0.6cm, right of =V2] (V3){$1$};
   \node[circle,draw,minimum size=0.6cm, right of =V3] (V4){$2$};
    \node[circle,draw,minimum size=0.6cm, right of =V4] (V5){$3$};
 \node[circle,draw,minimum size=0.6cm, above right of =V5] (V6){$2$};
 \node[circle,draw,minimum size=0.6cm, below right of =V5] (V7){$2$};
  \node[circle,draw,minimum size=0.6cm, below right of =V7] (V8){$2$};
 \node[circle,draw,minimum size=0.6cm, above right of =V6] (V9){$2$};
 
  \node[rectangle,minimum size=0.6cm, draw,below of=V7] (W1) {$1$};

\draw[->] (W1) -- (V7);
   \draw[->] (V1) -- (V2);
      \draw[->] (V2) -- (V3);
         \draw[->] (V3) -- (V4);
            \draw[->] (V4) -- (V5);
                  \draw[->] (V5) -- (V6);
   \draw[->] (V5) -- (V6);
    \draw[->] (V5) -- (V7);
     \draw[->] (V6) -- (V9);
      \draw[->] (V7) -- (V8);

\end{tikzpicture}
\]

The corresponding Kac-Moody algebra is of indefinite type and is not hyperbolic, see \cite{Kac}, exercise 4.2. We will calculate the vertex function of $\qv$ using Corollary \ref{cor: vertex slant sum factorization}.


Conjecture \ref{conj: 0 dimensional vertex} claims that
\[
\msver_{\qv^{(1)}}=\prod_{m \in S^{(1)}} \frac{\Phi(\hbar m)}{\Phi(m)} 
\]
where 
\begin{multline*}
S^{(1)}=\{z_1,\kappa^2 z_6,z_1z_2,\kappa z_5 z_6, z_4 z_5 z_6, \kappa^3 z_5 z_6 z_7, \kappa^2 z_4 z_5 z_6 z_7, \kappa z_4 z_5^2 z_6 z_7, \\ \kappa^{-1} z_1 z_2 z_3 z_4 z_5 z_6, \kappa z_1 z_2 z_3 z_4 z_5 z_6 z_7,  z_1 z_2 z_3 z_4 z_5^2 z_6 z_7, \kappa^{-1} z_1 z_2 z_3 z_4^2 z_5^2 z_6 z_7 \}
\end{multline*}
and $\kappa=q/\hbar$.
This formula was proven in \cite{dinkjang}. Using $z_8$ (resp. $z_9$) as the K\"ahler parameter of $\qv^{(2)}$ (resp. $\qv^{(3)}$), we have
\[
\msver^{(2)}=\frac{\Phi(\hbar \kappa z_8) \Phi(\hbar \kappa^2 z_8)}{\Phi(\kappa z_8) \Phi(\kappa^2 z_8)}, \quad \msver^{(3)}=\frac{\Phi(\hbar \kappa z_9) \Phi(\hbar \kappa^2 z_9)}{\Phi(\kappa z_9) \Phi(\kappa^2 z_9)}
\]

By Corollary \ref{cor: vertex slant sum factorization}, we have 
\[
\msver_{\qv}=\prod_{m \in S} \frac{\Phi(\hbar m)}{\Phi(m)}
\]
where 
\begin{multline*}
    S=\{z_1,\kappa^2 z_6 z_8 ,z_1z_2,\kappa z_5 z_6 z_8, z_4 z_5 z_6 z_8, \kappa^3 z_5 z_6 z_7 z_8 z_9, \kappa^2 z_4 z_5 z_6 z_7 z_8 z_9, \kappa z_4 z_5^2 z_6 z_7 z_8 z_9,   \\ 
    \kappa^{-1} z_1 z_2 z_3 z_4 z_5 z_6 z_8, \kappa z_1 z_2 z_3 z_4 z_5 z_6 z_7 z_8 z_9,  z_1 z_2 z_3 z_4 z_5^2 z_6 z_7 z_8 z_9, \kappa^{-1} z_1 z_2 z_3 z_4^2 z_5^2 z_6 z_7 z_8 z_9, \\
    \kappa z_8, \kappa^2 z_8, \kappa z_9, \kappa^2 z_9
    \}
\end{multline*}


In the Kac-Moody algebra, the associated coweights are $\lambda=\fundwt_7^\vee$ and 
\[
\mu=-\fundwt_1^\vee+\fundwt_3^\vee+\fundwt_6^\vee+2\fundwt_7^\vee-2\fundwt_8^\vee-2\fundwt_9^\vee
\]

We can observe directly that all roots $\alpha$ associated to the monomials of $S$ satisfy $\langle\alpha,\mu\rangle>0$ and with the appropriate magnitude. Using Lemma \ref{lem: conj 1.4 at the level of dimensions}, it is a straightforward exercise to check that these are the only negative, real roots satisfying this property.

\end{example}

\subsection{Branching of the nonstationary Ruijsenaars function}\label{sec: ruijsenaars branching}

For $n\geq 2$, let $X_n$ be the quiver variety $\qv_{Q,\theta}(\dv,\dw)$ with $\dv=(1,2,\dots,n-1)$ and $\dw=(0,\dots,0,n)$, and $\theta=(1,1,\ldots,1)$ for the $A_{n-1}$ quiver:

\[
\begin{tikzpicture}[->,>=Stealth,thick,node distance=2cm]

  \node[circle,draw,minimum size=0.9cm] (V1) {$1$};
  \node[circle,draw,minimum size=0.9cm,right of=V1] (V2) {$2$};
\node[right of=V2] (V3) {$\cdots$};
 \node[circle,draw,minimum size=0.9cm,inner sep=0,right of=V3] (V4) {$n-1$};

 
  \node[rectangle,draw,minimum size=0.9cm,below of=V4] (W4) {$n$};

  \draw[->] (V2) to (V1);
    \draw[->,] (V3) to (V2);
    \draw[->,] (V4) to (V3);

\draw[->] (W4) -- (V4);
 
\end{tikzpicture}
\]

It is known that $X_n \cong T^*\mathcal F\ell(\C^n)$, the cotangent bundle to the variety parameterizing  quotients $\C^{n} \twoheadrightarrow V_{n-1} \twoheadrightarrow\ldots \twoheadrightarrow V_{1}$ where $\dim V_{i}=i$. The slant sum construction can be used to relate $X_n$ and $X_{n-1}$. 

Let $Y_{n-1}$ be the quiver variety $\qv_{Q',\theta'}(\dv',\dw')$ with $\dv'=(n-1)$ and $\dw'=(n)$ for the quiver with a single vertex and no edges:

\[
\begin{tikzpicture}[->,>=Stealth,thick,node distance=2cm]

\node[circle,draw,minimum size=0.9cm,inner sep=0,left of=V1] (V0) {$n-1$};
 

   \node[rectangle,draw,minimum size=0.9cm,below of=V0] (W0) {$n$};

\draw[->] (W0) -- (V0);
 
\end{tikzpicture}
\]
with $\theta'=(1)$. It is known that $Y_{n-1}$ is the cotangent bundle to the Grassmannian parameterizing quotients $\C^{n} \twoheadrightarrow V$ where $\dim V=n-1$. 

Let $\sspt_1$ be the single vertex in the quiver corresponding to $Y_{n-1}$, and $\sspt_2$ be the rightmost vertex of the $A_{n-1}$ quiver. By definition, the slant sum over the compatible vertices $\sspt_{1}$ and $\sspt_{2}$ is
\[
X_{n}=Y_{n-1} \slantsum X_{n-1}
\]
Informally, this equation says that the flag variety in $\C^{n}$ can be built by attaching a flag variety in $\C^{n-1}$ to a Grassmannian. This is not literally true, but the point of this paper is that this can be made sense of at the level of torus fixed points.

The torus $\bT^{(1)}=(\Cs)^{n} \times \Cs_{\hbar}$ acts on $Y_{n-1}$. Let $a_1,\ldots,a_{n}$ denote the equivariant parameters of the first factor. Let $\tbw_{n-1}$ and $\tb_{n-1}$ denote the tautological bundles on $Y_{n-1}$ corresponding to the framing and gauge vertices, respectively. Let $p^{(1)}\in Y_{n-1}^{\bT^{(1)}}$ be the torus fixed point such that
\[
\tb_n\big|_{p^{(1)}} = a_1+\dots+a_{n-1}
\]

The torus $\bT^{(2)}=(\Cs)^{n-1} \times \Cs_{\hbar}$ acts on $X_{n-1}$. Let $b_1,\ldots,b_{n-1}$ denote the equivariant parameters for the first factor. Let $\tbw_{n-2}$ and $\tb_i$ for $1 \leq i \leq n-2$ denote the tautological bundles on $X_{n-1}$. Let $p^{(2)}\in X_{n-1}^{\bT^{(2)}}$ be the fixed point such that
\[
\tb_i \big|_{p^{(2)}}= b_1+\dots+b_i
\]
We choose the chamber for $\bT^{(1)}$ which orders the weights of $\tb_{n-1}|_{p^{(1)}}$ as
\[
a_1<\dots<a_{n-1}
\]
Let $p=p^{(1)}\slantsum p^{(2)}$.

Applying Theorem \ref{thm: vertex and slant sum 2} with trivial descendant, we obtain a formula for the vertex function of $X_n$ at $p$. 

Recall from \cite{bottadink} or \cite{dinkms1} that there is a bijection
\[
\left( \qm_{p^{(1)}}^{d}\right)^{\bT^{(1)} \times \Cs_{q}}=\left\{(d_1,\ldots,d_{n-1}) \, \mid \, \sum_{i} d_{i}=d\right\}
\]
and for a tuple $f=(d_1,\ldots,d_{n-1})$,
\[
\frac{\vrs|_{f}}{\extpow( N_{\vir}|_{f}^{\vee})} z_{n-1}^{\deg f}=\left(-\frac{q}{\hbar^{1/2}}\right)^{n |f|}\left(\prod_{i=1}^{n-1} \prod_{j=1}^{n} \frac{\left(\hbar \frac{a_{i}}{a_{j}} \right)_{d_{i}}}{\left(q \frac{a_{i}}{a_{j}} \right)_{d_{i}}}\right) \left(\prod_{i,j=1}^{n-1} \frac{\left(q \frac{a_{i}}{a_{j}} \right)_{d_{i}-d_{j}}}{\left(\hbar \frac{a_{i}}{a_{j}} \right)_{d_{i}-d_{j}}} \right) z_{n-1}^{|f|}
\]

We also calculate
\[
\exppref_{p^{(2)}}=\exp\left(\ln(q)^{-1}\sum_{i=1}^{n-2} \ln(b_{1}\ldots b_{i}) \ln(z_{i})\right)
\]
and
\[
\Phipref_{p^{(2)}}=\prod_{1 \leq i < j \leq n-1}\frac{\Phi\left( q \frac{b_{j}}{b_{i}}\right)}{\Phi\left( \hbar \frac{b_{j}}{b_{i}}\right)}
\]

So
\[
\exppref_{p^{(2)}}|_{b_i= b_i q^{d_i}}= \exppref_{p^{(2)}} \prod_{i=1}^{n-2} z_{i}^{d_1+\ldots d_{i}}
\]
and 
\[
\Phipref_{p^{(2)}}|_{b_i= b_i q^{d_i}}=\Phipref_{p^{(2)}} \prod_{1 \leq i < j \leq n-1} \frac{\left( \hbar \frac{b_{j}}{b_{i}}\right)_{d_{j}-d_{i}}}{\left( q \frac{b_{j}}{b_{i}}\right)_{d_{j}-d_{i}}}
\]

Putting this all together, Theorem \ref{thm: vertex and slant sum 2} gives

\begin{multline*}
\ver_{p}=\sum_{d_{1},\ldots,d_{n-1} \geq 0} \left(\prod_{i=1}^{n-1} \prod_{j=1}^{n} \frac{\left(\hbar \frac{a_{i}}{a_{j}} \right)_{d_{i}}}{\left(q \frac{a_{i}}{a_{j}} \right)_{d_{i}}}\right) \left(\prod_{i,j=1}^{n-1} \frac{\left(q \frac{a_{i}}{a_{j}} \right)_{d_{i}-d_{j}}}{\left(\hbar \frac{a_{i}}{a_{j}} \right)_{d_{i}-d_{j}}} \right) \\
\left( \prod_{1 \leq i < j \leq n-1} \frac{\left( \hbar \frac{b_{j}}{b_{i}}\right)_{d_{j}-d_{i}}}{\left( q \frac{b_{j}}{b_{i}}\right)_{d_{j}-d_{i}}}\right) \left(-\frac{q}{\hbar^{1/2}}\right)^{n |f|} \left( \prod_{i=1}^{n-1} z_{i}^{d_1+\ldots d_{i}}\right) \ver_{p^{(2)}}|_{b_i=a_i q^{d_i}}
\end{multline*}
Up to changes in notation, this is exactly equation (3.11) from \cite{NSmac}. In fact, the similarity of (3.11) of \cite{NSmac} with the vertex function of the cotangent bundle of Grassmannian was part of this inspiration for this work.

\section{Mirror symmetry of Higgs and Coulomb branches}\label{subsec: Higgs and Coulomb branches} 
In this section we recall the basic facts about Higgs and Coulomb branches that we will use.

\

Recall that $\qv_{Q,\theta}(\dv,\dw)$ denotes a quiver variety corresponding to some quiver $Q=(Q_0,Q_1)$, $\dv,\dw \in \mathbb{N}^{Q_{0}}$ and a generic stability parameter $\theta \in \mathbb{Z}^{Q_0}$. As we discussed in Section \ref{sec: quiver varieties}, there is a natural (projective) morphism:
\begin{equation*}
\pi\colon \qv_{Q,\theta}(\dv,\dw) \to \qv_{Q,0}(\dv,\dw), \qquad
\end{equation*}
where $\qv_{Q,0}(\dv,\dw) = \mu^{-1}(0)/\!\!/G_{\dv}$
is the Hamiltonian reduction of ${\bf{M}}=T^*{\bf N}$, where ${\bf N}=\rep_{Q}(\dv,\dw)$, by the action of the group $G_{\dv}$.

\subsection{Higgs branches}  
The variety $\qv_{Q,0}(\dv,\dw)$ is the {\emph{Higgs branch}} of the \emph{quiver gauge theory} corresponding to the pair $(G_{\dv},{\bf{N}})$. More generally, an arbitrary pair $(G,{\bf{N}})$ of a reductive group $G$ and a finite dimensional representation ${\bf{N}}$ should define a certain gauge theory such that its Higgs branch $\mathcal{M}_H(G,{\bf{N}})$ is the Hamiltonian reduction $T^*{\bf{N}}/\!\!/\!\!/ G$.

One important feature of quiver gauge theories (i.e. those $(G,{\bf{N}})$ that arise from a choice of a quiver $Q$ as above) is that we can consider the GIT quotient $\qv_{Q,\theta}(\dv,\dw)$ depending on a stability parameter $\theta$. For generic $\theta$ (see Proposition \ref{prop: generic theta}) the variety $\qv_{Q,\theta}(\dv,\dw)$ is smooth, symplectic, and is a resolution of singularities of the image of $\pi\colon \qv_{Q,\theta}(\dv,\dw) \rightarrow \mathcal{M}_H(G,{\bf{N}})$. So, whenever the map $\pi$ is surjective (see \cite{Crawley-Boevey} for the criteria), we can extract the Higgs branch $\mathcal{M}_H(G,{\bf{N}})$ from the resolution $\qv_{Q,\theta}(\dv,\dw)$. However, in many cases (for example, if ${\dw}=0$), $\qv_{Q,\theta}(\dv,\dw)$ is empty but $\mathcal{M}_H(G,{\bf{N}})$ is highly nontrivial.   We will denote  $\mathcal{M}_H(G,{\bf{N}})$ by $\mathcal{M}_H$ whenever $(G,{\bf{N}})$ are clear.

\subsection{Coulomb branches} In \cite{BFNII}, Braverman, Finkelberg, and Nakajima proposed a mathematical definition of the {\emph{Coulomb branch}} $\mathcal{M}_C(G,{\bf{N}})$ associated to a gauge theory corresponding to a  pair $(G,{\bf{N}})$ as above. The variety $\mathcal{M}_C(G,{\bf{N}})$ is an affine Poisson variety. Even for nice quiver gauge theories (e.g., when $Q$ is ADE and $\dv, \dw$ are such that $\qv_{Q,\theta}(\dv,\dw)$ is nonempty), in contrast to $\mathcal{M}_H(G,{\bf{N}})$,  the varieties $\mathcal{M}_C(G,{\bf{N}})$ {\emph{do not}} admit symplectic resolutions. 

On the other hand, it is known that $\mathcal{M}_C(G,{\bf{N}})$ always has symplectic singularities, see \cite{Bel}.
We will denote  $\mathcal{M}_C(G,{\bf{N}})$  by $\mathcal{M}_C$ whenever $(G,{\bf{N}})$ are clear. 

\

Let us briefly recall the definition of $\mathcal{M}_C$.  Set $\mathcal{K}:=\mathbb{C}((z))$, $\mathcal{O}:=\mathbb{C}[[z]]$ and let $G_{\mathcal{K}}$, $G_\mathcal{O}$, ${\bf{N}}_{\mathcal{O}}$ be the corresponding spaces whose $\mathbb{C}$-points are $G((z))$, $G[[z]]$, and ${\bf{N}}[[z]]$ respectively. Set 
\begin{equation*}
\mathcal{T} := G_{\mathcal{K}} \times^{G_\mathcal{O}} {\bf{N}}_{\mathcal{O}},~\mathcal{R}=\{[g,n] \in \mathcal{T}\,|\, gn \in \mathbf{N}_{\mathcal{O}}\}. 
\end{equation*}
The group $G_{\mathcal{O}}$ acts naturally on the space $\mathcal{R}$. In \cite[2(ii)]{BFNII}, the authors define the equivariant Borel-Moore homology  $H_*^{G_{\mathcal{O}}}(\mathcal{R})$ of $\mathcal{R}$. Moreover, in \cite[Section 3]{BFNII}, they endow $H_*^{G_{\mathcal{O}}}(\mathcal{R})$ with the algebra structure given by convolution $*$. Finally, they prove that $*$ is commutative and define: 
\begin{equation*}
\mathcal{M}_C := \operatorname{Spec}(H_*^{G_{\mathcal{O}}}(\mathcal{R}),*).
\end{equation*}
One can also define a quantized Coulomb branch $\mathcal{A}_{C,\hbar}$ by adding an additional $\mathbb{C}^\times$-equivariance with respect to the ``loop rotation'' action:
\begin{equation*}
\mathcal{A}_{C,\hbar} := H_*^{G_{\mathcal{O}} \rtimes \mathbb{C}^\times}(\mathcal{R}),~ \mathcal{A}_{C}:=\mathcal{A}_{C,\hbar}|_{\hbar=1}.
\end{equation*}
Convolution equips $\mathcal{A}_{C,\hbar}$ with an associative algebra structure.  


\subsection{Torus action and fixed points on Coulomb branches}\label{sec: torus action on Coulomb}
The space $\mathcal{R}$ maps naturally to the affine Grassmannian $\operatorname{Gr}_G=G_{\mathcal{K}}/G_{\mathcal{O}}$. Connected components of $\operatorname{Gr}_G$ are  labeled by the lattice $\pi_1(G)$. So, $\mathbb{C}[\mathcal{M}_C]$ admits a grading by $\pi_1(G)$, and hence, the variety $\mathcal{M}_C$ admits a (Hamiltonian) action of the torus $\pi_1(G)^{\wedge}$, where by $\bullet^{\wedge}$ we mean the Pontryagin dual (see \cite[Section 3(v)]{BFNII} for the details). The cohomological grading on $\mathbb{C}[\mathcal{M}_C]=H_*^{G_\mathcal{O}}(\mathcal{R})$ defines the action of $\mathbb{C}^\times$ on $\mathcal{M}_C$.

A choice of the stability parameter $\theta\colon G \rightarrow \mathbb{C}^\times$ defines a cocharacter $\mathbb{C}^\times \rightarrow \pi_1(G)^{\wedge}$. Abusing notations, we will denote this cocharacter by $\theta$.
In \cite[Conjecture 3.25(1)]{BFN_slices} the authors conjecture that the set $\mathcal{M}_C^{\theta(\mathbb{C}^\times)}(\mathbb{C})$ of $\mathbb{C}$-points of the {\emph{schematic}} fixed points of $\mathcal{M}_C$ is either empty or consists of one point. From now on, we denote $\mathcal{M}_C^{\theta(\mathbb{C}^\times)}$ simply by $\mathcal{M}_C^{\theta}$.

\subsection{Mirror symmetry and some conjectures}\label{subsec: conjectures} 



 In this section we formulate various conjectures relating Higgs and Coulomb branches for quiver gauge theories.


 \

\subsubsection{Coulomb branches with non-singular torus fixed points} 3d mirror symmetry is a nontrivial relation between Higgs and Coulomb branches for $(G,{\bf{N}})$. From now on, we assume that our theory is a quiver gauge theory for a quiver $Q$ without edge loops, and we denote by $\mathfrak{g}=\mathfrak{g}_Q$ the corresponding Kac-Moody Lie algebra. We will denote the smooth symplectic variety $\qv_{Q,\theta}(\dv,\dw)$ by $\widetilde{\mathcal{M}}_H$ (recall that in many cases $\widetilde{\mathcal{M}}_H$ is a symplectic resolution of the Higgs branch $\mathcal{M}_H$). 

For $i \in Q_0$ we will denote by $\omega_i^\vee, \alpha_i^\vee$ the corresponding fundamental coweight and simple coroot for $\mathfrak{g}$.
Given $\dv,\dw \in \mathbb{N}^{Q_{0}}$, set:
\begin{equation}\label{eq: def lambda mu}
\lambda=\sum_i \dw_i\omega_i^\vee,~\mu=\lambda-\sum_{i}\dv_i\alpha_i^\vee
.
\end{equation}

One mirror symmetry statement is the Hikita conjecture \cite{hikita} predicting an identification of (graded) algebras:
\begin{equation}\label{conj: Hikita}
H^*(\widetilde{\mathcal{M}}_H) \simeq \mathbb{C}[\mathcal{M}_C^{\theta}].
\end{equation}

 If $\widetilde{\mathcal{M}}_H$ is a point then, assuming (\ref{conj: Hikita}), we deduce that $\mathbb{C}[\mathcal{M}_C^{\theta}]=\mathbb{C}$ that suggests that $p^! \in \mathcal{M}_C$ should be nonsingular. It follows from Remark \ref{rem: quiver point mu extremal} below that $\widetilde{\mathcal{M}}_H$ is a point iff $\mu \in W\lambda$.  So, we arrive to the following Conjecture that can be formulated without referring to $\widetilde{\mathcal{M}}_H$ and so has a chance to be valid for arbitrary symmetrizable quivers $Q$ (where $\mathcal{M}_C$ is defined as in \cite{NW}).

\begin{conjecture}\label{conj: nonsingular Coulomb point Higgs}
We have $\mathcal{M}_C^\theta=\{p^!\}$ and  $p^! \in \mathcal{M}_C$ is nonsingular if $\mu \in W\lambda$.      
\end{conjecture}   

\begin{remark}
It's natural to expect that the implication of Conjecture \ref{conj: nonsingular Coulomb point Higgs} also holds in the opposite direction. Namely if $p^! \in \mathcal{M}_C$ is nonsingular then $\mu \in W\lambda$. Note that, assuming (\ref{conj: Hikita}) one would see that $H^*(\widetilde{\mathcal{M}}_H)=\mathbb{C}$. It's not hard to show that $H^*(\widetilde{\mathcal{M}}_H)=\mathbb{C}$ forces $\widetilde{\mathcal{M}}_H$ to be isomorphic to the affine space. 
\end{remark}






\subsubsection{Tangent spaces to nonsingular fixed points on partial resolutions} 
Recall that Conjecture \ref{conj: dual tangent space} gives a formula for the tangent space $T_{p^!}\mathcal{M}_C$ when $\mu \in W\lambda$. Actually we expect more to be true: this formula should allow one to compute the character of the tangent space for an arbitrary {\emph{nonsingular}} fixed point of a partially resolved Coulomb branch. Let us explain the details.

Let $\bA$ be a torus acting on ${\bf{N}}$ commuting with $G$ and let $\nu\colon \mathbb{C}^\times \rightarrow \bA$ be a cocharacter. As in \cite[Section 3(ix)]{BFNII}, it defines a {\emph{partial}} resolution 
\begin{equation}\label{eq: part resol coulomb}
\widetilde{\mathcal{M}}_{C,\nu} \rightarrow \mathcal{M}_C.
\end{equation}
The action of $\pi_1(G)^{\vee}$ extends to $\widetilde{\mathcal{M}}_{C,\nu}$ and the morphism (\ref{eq: part resol coulomb}) is  $\pi_1(G)^{\vee}$-equivariant.  
On the Higgs side, cocharacter $\nu$ defines an action $\mathbb{C}^\times \curvearrowright \widetilde{\mathcal{M}}_H$. 

One can show that:
\begin{equation*}
\widetilde{\mathcal{M}}_{H}(G,{\bf{N}})^{\nu} = \bigsqcup_{\gamma\colon  \mathbb{C}^\times \rightarrow G} \widetilde{\mathcal{M}}_H(Z_{G}(\gamma),{\bf{N}}^{(\gamma,\nu)}),
\end{equation*}
where the disjoint union is taken over all conjugacy classes of cocharacters $\gamma\colon \mathbb{C}^\times \rightarrow G$.
Moreover, every $(Z_{G}(\gamma),{\bf{N}}^{(\gamma,\nu)})$ corresponds to some quiver, hence, each $\widetilde{\mathcal{M}}_H(Z_{G}(\gamma),{\bf{N}}^{(\gamma,\nu)})$ itself is a Nakajima quiver variety, see, for example \cite[Proposition 2.13]{Dum_Kr}.


Assume now that $\widetilde{\mathcal{M}}_{H}^{\nu}$ contains an isolated fixed point $p$. By the above, this point itself is equal to the Nakajima quiver variety corresponding to some choice of $\gamma$. Assuming Conjecture \ref{conj: nonsingular Coulomb point Higgs} holds, we see that the Coulomb branch $\mathcal{M}_C(Z_G(\gamma),{\bf{N}}^{(\gamma,\nu)})$ has a unique nonsingular $\theta$-fixed point $p^!$. 

The following conjecture is due to Justin Hilburn (see \cite[Conjecture 1.6]{KPW}).
\begin{conjecture}\label{conj: open embedding}

There is a $\pi_1(G)^{\wedge}$-equivariant open embedding:
\begin{equation}\label{eq:Hilburn_embedding}
j\colon \mathcal{M}_C(Z_G(\gamma),{\bf{N}}^{(\gamma,\nu)}) \hookrightarrow \widetilde{\mathcal{M}}_{C,\nu}. 
\end{equation}
for each $\gamma$.
\end{conjecture}

Assuming both Conjectures \ref{conj: nonsingular Coulomb point Higgs} and \ref{conj: open embedding} hold, one obtains the following corollary.

\begin{corollary}\label{cor: iso of tangent spaces}
The point $j(p^!) \in \widetilde{\mathcal{M}}_{C,\nu}$ is nonsingular and $j$ induces an isomorphism of tangent spaces at $p^!$ and $j(p^!)$.    
\end{corollary}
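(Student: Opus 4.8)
The plan is to deduce the statement formally from the two conjectures in play, together with the elementary fact that an open immersion is, Zariski-locally on its image, an isomorphism of schemes. First I would record the input already set up above: since the isolated fixed point $p \in \widetilde{\mathcal{M}}_H^{\nu}$ is itself the zero-dimensional quiver variety $\widetilde{\mathcal{M}}_H(Z_G(\gamma),{\bf{N}}^{(\gamma,\nu)})$, Conjecture \ref{conj: nonsingular Coulomb point Higgs} guarantees that $\mathcal{M}_C(Z_G(\gamma),{\bf{N}}^{(\gamma,\nu)})^{\theta}$ is a single nonsingular point $p^!$; in particular $p^!$ is a regular point of $\mathcal{M}_C(Z_G(\gamma),{\bf{N}}^{(\gamma,\nu)})$.

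The geometric core is then entirely local. By Conjecture \ref{conj: open embedding}, $j$ is an open embedding, hence restricts to an isomorphism of $\mathcal{M}_C(Z_G(\gamma),{\bf{N}}^{(\gamma,\nu)})$ onto the open subscheme $U := j\bigl(\mathcal{M}_C(Z_G(\gamma),{\bf{N}}^{(\gamma,\nu)})\bigr) \subset \widetilde{\mathcal{M}}_{C,\nu}$. Since $U$ is open, the inclusion $U \hookrightarrow \widetilde{\mathcal{M}}_{C,\nu}$ induces an isomorphism of local rings $\mathcal{O}_{\widetilde{\mathcal{M}}_{C,\nu},\,j(p^!)} \cong \mathcal{O}_{U,\,j(p^!)}$, and composing with the isomorphism coming from $j$ yields
\[
\mathcal{O}_{\widetilde{\mathcal{M}}_{C,\nu},\,j(p^!)} \;\cong\; \mathcal{O}_{\mathcal{M}_C(Z_G(\gamma),{\bf{N}}^{(\gamma,\nu)}),\,p^!}.
\]
Regularity is a property of the local ring, so $j(p^!)$ is nonsingular; and since the Zariski tangent space $T_x = (\mathfrak{m}_x/\mathfrak{m}_x^2)^{\vee}$ depends only on the local ring, this isomorphism identifies $T_{p^!}\mathcal{M}_C(Z_G(\gamma),{\bf{N}}^{(\gamma,\nu)})$ with $T_{j(p^!)}\widetilde{\mathcal{M}}_{C,\nu}$, realized concretely by the differential $dj_{p^!}$. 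This establishes both assertions.

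Finally I would record the equivariance, which is what makes the statement useful downstream: because $j$ is $\pi_1(G)^{\wedge}$-equivariant (and compatible with the conical $\Cs$-action), the identification $dj_{p^!}$ is an isomorphism of $\pi_1(G)^{\wedge} \times \Cs$-representations. Hence the character of $T_{j(p^!)}\widetilde{\mathcal{M}}_{C,\nu}$ coincides with that of $T_{p^!}\mathcal{M}_C(Z_G(\gamma),{\bf{N}}^{(\gamma,\nu)})$, which is computed by Conjecture \ref{conj: dual tangent space}. The only genuine ``obstacle'' here is logical rather than geometric: the deduction is unconditional once the two conjectures are granted, so all the real content sits in Conjectures \ref{conj: nonsingular Coulomb point Higgs} and \ref{conj: open embedding}, while the corollary itself is merely the functoriality of local rings and tangent spaces under open immersions.
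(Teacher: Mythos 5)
Your argument is correct and is exactly the (implicit) reasoning the paper intends: the corollary is stated as an immediate consequence of Conjectures \ref{conj: nonsingular Coulomb point Higgs} and \ref{conj: open embedding}, with the content being precisely that an open embedding induces isomorphisms of local rings, hence preserves nonsingularity and identifies Zariski tangent spaces. Your additional remark on $\pi_1(G)^{\wedge}$-equivariance of $dj_{p^!}$ is consistent with how the paper uses the corollary to transport the character formula of Conjecture \ref{conj: dual tangent space}.
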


Recall now that Conjecture \ref{conj: dual tangent space} describes the tangent space $T_{p^!}\mathcal{M}_C(Z_G(\gamma),{\bf{N}}^{(\gamma,\nu)})$. So, combining it with Conjecture \ref{conj: open embedding}, we should obtain a conjectural description of the tangent space to a  nonsingular $\theta$-fixed point of $\widetilde{\mathcal{M}}_{C,\nu}$. We expect that there should be a bijection between nonsingular $\theta$-fixed points of $\widetilde{\mathcal{M}}_{C,\nu}$ and isolated $\nu$-fixed points on $\widetilde{\mathcal{M}}_H$. If that is the case, then a tangent space to arbitrary non-singular $\theta$-fixed point of $\widetilde{\mathcal{M}}_{C,\nu}$ should have a description as above.

\begin{remark}
Note that we are not assuming that the torus $\bA$ arises from the action of $\prod_{i}GL(W_i)$ on ${\bf{N}}$ (i.e., that it is a {\emph{framing}} torus). If it does, then    $\mathcal{M}_C(Z_G(\gamma),{\bf{N}}^{(\gamma,\nu)})$ is a product of Coulomb branches for the {\emph{same}} quiver $Q$. 
Note also, that for our purposes
it suffices to have a much weaker version of Conjecture \ref{eq:Hilburn_embedding}, namely that the completions of (\ref{eq:Hilburn_embedding}) at the corresponding fixed points are $\pi_1(G)^{\wedge}$-isomorphic. One possible way to prove this is by showing that this holds after deforming $\widetilde{\mathcal{M}}_{C,\nu}$.
\end{remark}

\section{Irreducible modules over some Coulomb branches}\label{section: character of simple} 
Assume that $Q,\mathsf{v},\mathsf{w}$ are such that $\mathcal{M}_C^\theta = \{p\}$ and $p \in \mathcal{M}_C$ is {\emph{nonsingular}}.  Recall that conjecturally this holds iff $\mu \in W\lambda$, where $\lambda, \mu$ are constructed from ${\mathsf{v}}, \mathsf{w}$ as in (\ref{eq: lambda and mu general}).

\

Fix $x \in \mathfrak{a}_Q$ and let 
\begin{equation*}
\mathcal{A}_x := H_*^{(G_{\mathcal{O}} \rtimes \mathbb{C}^\times) \times \mathsf{A}_Q}(\mathcal{R})_{(x,1)}
\end{equation*}
be the corresponding quantized Coulomb branch algebra. To simplify the notation, we denote $\mathcal{A}_x$ simply by $\mathcal{A}$. 
Cocharacter $\theta$ determines the category $\mathcal{O}_\theta(\mathcal{A})$ as in \cite[Section 3.4]{catO_coulomb}.

\

It's not hard to see that the category $\mathcal{O}_\theta(\mathcal{A})$ contains one simple object (to be denoted by $L$, see Section \ref{sec:cat_O} below for the details). The goal of this section is to show that the normalized $\mathsf{a}_Q$-character of $L$ coincides with the character of $T_p^-\mathcal{M}_C$, in particular, it (and it's $\hbar^!$-deformed version) can be read off from the equation (\ref{eq: intro tangent formula}) whenever Conjecture \ref{conj: dual tangent space} holds.

\subsection{Category $\mathcal{O}$}\label{sec:cat_O}
Recall that the cocharacter $\theta$ defines the category $\mathcal{O}_\theta(\mathcal{A})$.
By, \cite[Lemma 3.27]{catO_coulomb}, irreducible objects of this category are in bijection with irreducible modules over the (finite-dimensional) algebra $\mathsf{C}_\theta(\mathcal{A})$ called {\emph{Cartan subquotient}} or \emph{$B$-algebra} and defined as follows:
\begin{equation*}
\mathsf{C}_\theta(\mathcal{A}) := \mathcal{A}_{0}/\sum_{i<0}\mathcal{A}_{i}\mathcal{A}_{-i},
\end{equation*}
where index $i$ corresponds to the $\mathbb{Z}$-grading on $\mathcal{A}$ induced by the cocharacter $\theta$. It follows from Lemma \ref{lemma_descr_grtr} below that $\mathsf{C}_\theta(\mathcal{A})=\mathbb{C}$, so $\mathcal{O}_\theta(\mathcal{A})$ has the unique irreducible object to be denoted $L$.

\begin{remark}
Alternatively, assuming $\mu \in W\lambda$ one can use   \cite[Theorem 9.14]{categorical_yang_modules} to see that  $\mathcal{O}_\theta(\mathcal{A})$ has a unique irreducible object. We are to grateful to Joel Kamnitzer for pointing this out.  
\end{remark}

We have an embedding $\mathsf{a}_Q \subset \mathcal{A}$ inducing the action $\mathsf{a}_Q \curvearrowright L$. By {\emph{normalized}} $\mathsf{a}_Q$-character of $L$ we  mean the $\mathsf{a}_Q$-character of $L$ normalized in such a way that it starts with $1$.

\subsection{Completions and a formal version of $L$}
The algebra $\mathcal{A}$ is $\mathbb{Z}$-filtered and the filtration is complete and separated. For $k \in \mathbb{Z}$ let us denote the $k$'th filtration term by $F^k\mathcal{A}$. Algebra $\mathcal{A}$ is also $\mathbb{Z}$-graded, where the grading is induced by the $\theta(\mathbb{C}^\times)$-action. For $i \in \mathbb{Z}$ we denote by $\mathcal{A}_i$ the corresponding graded component. 
Set: 
\begin{equation*}
\widehat{\mathcal{A}}= \underset{\underset{k \rightarrow -\infty}{\longleftarrow}}{\operatorname{lim}}\, \mathcal{A}/F^k\mathcal{A}, 
\end{equation*} 
algebra $\widehat{\mathcal{A}}$ is the completion of $\mathcal{A}$ w.r.t. the filtration $F^\bullet\mathcal{A}$. Filtration $F^\bullet$ on $\mathcal{A}$ induces the filtration on $\widehat{\mathcal{A}}$ to be denoted by the same symbol. We have $\operatorname{gr}\widehat{\mathcal{A}}=\operatorname{gr}\mathcal{A}=\mathbb{C}[\mathcal{M}_C]$.

For a $\mathbb{Z}$-filtered vector space $F^\bullet V$, we set $\operatorname{R}_\hbar(V):=\bigoplus_{k \in \mathbb{Z}}(F^kV) \hbar^k \subset V[\hbar^{\pm 1}]$ and set $\widehat{\operatorname{R}}_\hbar(V) := \underset{\underset{k \rightarrow +\infty}{\longleftarrow}}{\operatorname{lim}}\, \operatorname{R}_\hbar(V)/(\hbar^k)$.

Set: 
\begin{equation*}
\widehat{\mathcal{A}}_\hbar = \widehat{R}_\hbar(\widehat{\mathcal{A}}).
\end{equation*}

We have $\widehat{\mathcal{A}}_\hbar/(\hbar)=\operatorname{gr}\widehat{\mathcal{A}}=\mathbb{C}[\mathcal{M}_C]$. Let $\mathfrak{m}_p \subset \mathbb{C}[\mathcal{M}_C]$ be the maximal ideal of $p \in \mathcal{M}_C$ and let $\mathfrak{m}_{p,\hbar} \subset \widehat{\mathcal{A}}_\hbar$ be its preimage. 
Set 
\begin{equation*}
\widehat{\mathcal{A}}_{\hbar,\widehat{p}} := \underset{\underset{n \rightarrow +\infty}{\longleftarrow}}{\operatorname{lim}}\, \widehat{\mathcal{A}}_{\hbar}/\mathfrak{m}_{p,\hbar}^n.
\end{equation*}


Let's describe the algebra $\widehat{\mathcal{A}}_{\hbar,\widehat{p}}$. First of all recall that $\mathcal{A}$ admits an action of $\mathsf{A}_{Q}$, it induces the action on $\widehat{\mathcal{A}}_{\hbar,\widehat{p}}$. 

For a vector space $V$ equipped with a skew-symmetric pairing $(\,,\,)$
let $\widehat{\mathbb{W}}_{\hbar}(V)$ be the formal Weyl algebra over $\mathbb{C}[[\hbar]]$ defined as follows: 
\begin{equation*}
\widehat{\mathbb{W}}_{\hbar}(V) = \mathbb{C}[[V,\hbar]]
\end{equation*}
subject to relations 
\begin{equation}\label{eq: formal Weyl}
[\hbar,V]=0,[a,b]=\hbar(a,b),~a,b \in V.
\end{equation}


We will be interested in the vector space $V=T^*_p\mathcal{M}_C$ with a pairing $(\,,\,)$ induced by the sympletic form on $\mathcal{M}_C^{\mathrm{reg}}$. Note that the torus $\mathsf{A}_Q$ acts on $T^*_p\mathcal{M}_C$ inducing the action on $\widehat{\mathbb{W}}_\hbar(T^*_p\mathcal{M}_C)$ (by algebra automorphisms).

\begin{lemma}\label{lem: completion is W}
There exists a $\mathsf{A}_Q$-equivariant isomorphism of algebras over $\mathbb{C}[[\hbar]]$:
\begin{equation*}
\widehat{\mathbb{W}}_\hbar(T^*_p\mathcal{M}_C) \simeq \widehat{\mathcal{A}}_{\hbar,\widehat{p}}.
\end{equation*}
\end{lemma}
\begin{proof}
Algebra $\widehat{\mathcal{A}}_{\hbar,\widehat{p}}$ is a formal quantization of $\mathbb{C}[\widehat{p}]=\underset{\underset{n \rightarrow +\infty}{\longleftarrow}}{\operatorname{lim}}\, \mathbb{C}[\mathcal{M}_C]/\mathfrak{m}_{p}^n$. Note that $\mathbb{C}[\widehat{p}]$ as a $\mathsf{A}_Q$-graded Poisson algebra is isomorphic to $\mathbb{C}[[T_p\mathcal{M}_C]]$. Any formal quantization of $\mathbb{C}[[T_p\mathcal{M}_C]]$ is isomorphic to $\widehat{\mathbb{W}}_\hbar(T^*_p\mathcal{M}_C)$ and this isomorphism can be made ${\mathsf{A}}_Q$-equivariant (see \cite[Lemma 1.5]{BK} and \cite[Lemma 5.2]{BLPWII}).
\end{proof}

Recall now that whenever a torus $\mathbb{T}$ acts on some vector space $R$, we can consider a vector subspace $R^{\mathrm{fin}} \subset R$ generated by the eigenvectors of $\mathbb{T} \curvearrowright R$. We will call $R^{\mathrm{fin}} \subset R$ the subspace of $\mathbb{T}$-finite vectors.

\

We define:  
\begin{equation*}
\mathbb{W}_\hbar(T^*_p\mathcal{M}_C):=\widehat{\mathbb{W}}_\hbar(T^*_p\mathcal{M}_C)^{\mathrm{fin}}, 
\end{equation*}
where $\bullet^{\mathrm{fin}}$ is taken with respect to the torus $\mathsf{A}_Q$.

\begin{remark}
Note that  $\mathbb{W}_\hbar(T^*_p\mathcal{M}_C)$ contains the ``ordinary'' Weyl algebra as a subalgebra. By  ``ordinary'' we mean the one given by  $\mathbb{C}[T^*_p\mathcal{M}_C,\hbar]$ subject the relations (\ref{eq: formal Weyl}). Algebra $\mathbb{W}_\hbar(T^*_p\mathcal{M}_C)$ is {\emph{larger}} than the ordinary Weyl algebra. 
\end{remark}

The action $\theta(\mathbb{C}^\times) \curvearrowright T^*_p\mathcal{M}_C$ induces the decomposition
\begin{equation*}
T_p^*\mathcal{M}_C = T^{*,+}_p\mathcal{M}_C \oplus T^{*,-}_p\mathcal{M}_C
\end{equation*}
into positive and negative parts. Symplectic form on $\mathcal{M}_C^{\mathrm{reg}}$ induces the nondegenerate pairing:
\begin{equation*}
(\,,\,)\colon T^{*,+}_p\mathcal{M}_C \times T^{*,-}_p\mathcal{M}_C \rightarrow \mathbb{C}.
\end{equation*}
For $y \in T^{*,+}_p\mathcal{M}_C$ let $v_y:=(y,-)$ be the corresponding element of $T^{-}_p\mathcal{M}_C$.
Let's now define an action of  $\mathbb{W}_\hbar(T^*_p\mathcal{M}_C)$ on $\mathbb{C}[T_p^-\mathcal{M}_C][[\hbar]]$. We define the action of $x \in T_p^{*,-}\mathcal{M}_C$ and $y \in T_p^{*,+}\mathcal{M}_C$ on $f \in \mathbb{C}[T^-_p\mathcal{M}_C][[\hbar]]=S^{\bullet}(T^{*,-}_p\mathcal{M}_C)[[\hbar]]$ by the following formulas:
\begin{equation}\label{eq: action functions reprellent definition}
x \cdot f = xf ,~y \cdot f = \hbar\partial_{v_y}(f).
\end{equation}

\begin{lemma}
The action (\ref{eq: action functions reprellent definition}) extends to the action of $\mathbb{W}_\hbar(T_p^*\mathcal{M}_C)$ on $\mathbb{C}[T_p^-\mathcal{M}_C][[\hbar]]$.
\end{lemma}
\begin{proof}
It is enough to check that for every integer $i \in \mathbb{Z}$ and $a \in \mathbb{W}_\hbar(T^*_p\mathcal{M}_C)_i$, the action of $a$ is well-defined. 
Let's pick a basis $x_1,\ldots,x_d$ of $T_p^{*,-}\mathcal{M}_C$ consisting of $\theta(\mathbb{C}^\times)$-eigenvectors and let $y_1,\ldots,y_d \in T^{*,+}_p\mathcal{M}_C$ be the dual basis. For $j=1,\ldots,d$ let $k_j$ be the $\theta$-weight of $x_j$. Note that $k_j<0$ for any $j$ as above and the $\theta$-weight of $y_j$ is equal to $-k_j$.

We can write $a$ as a (possibly infinite) sum of elements 
\begin{equation*}
x_1^{l_1}\ldots x_d^{l_d}y_1^{m_1}\ldots y_d^{m_d},~l_j, m_j  \in \mathbb{Z}_{\geqslant 0},~\sum_{j=1}^dl_jk_j-\sum_{j=1}^dm_jk_j=i
\end{equation*}
with some coefficients from $\mathbb{C}[[\hbar]]$. In particular, for any fixed $r \in \mathbb{Z}_{\geqslant 0}$, we have only finitely many possible $l_j, m_j$ as above such that $\sum_{j=1}^dm_j(-k_j) \leqslant r$.
It then follows that for any fixed vector $f \in \mathbb{C}[T_p^-\mathcal{M}_C]$ only finite number of elements in the sum  representing element $a$  would act on $f$ by nonzero operators. So, the action of $a$ on $f$ is well-defined. 
\end{proof}

\begin{lemma}
After the identification $\mathbb{W}_\hbar(T^*_p\mathcal{M}_C) \simeq \widehat{\mathcal{A}}_{\hbar,\widehat{p}}^{\mathrm{fin}}$ module $\mathbb{C}[T^-_p\mathcal{M}_C][[\hbar]]$ becomes isomorphic to the induced module from $\mathsf{C}_\theta(\widehat{\mathcal{A}}_{\hbar,\widehat{p}}^{\mathrm{fin}}) = \mathbb{C}[[\hbar]]$. We will denote this induced module by $M_\hbar$.
\end{lemma}
\begin{proof}
Follows from the definitions. 
\end{proof}

\subsection{Graded traces and their formal versions}

We will now need a notion of the $D$-module of graded traces. This notion was introduced in \cite{KMP} and further studied in \cite{DKK}.
Let us recall the construction. Let $\Sigma \subset \mathsf{a}_Q^*$ be the subset of $\mathsf{A}_Q$-weights that appear in $T^*_p\mathcal{M}_C$. Set 
\begin{equation*}
\Sigma_- := \{\alpha \in \Sigma\,|\, \langle \alpha,\theta\rangle < 0\}.
\end{equation*}
If $V$ is a vector space then  we will denote by $V[[z]]$ the space of formal linear combinations $\sum_{d \in \operatorname{Span}_{\mathbb{Z}_{\geqslant 0}}(\Sigma_-)}z^dv_d$, $v_d \in V$. The space $\operatorname{GrTr}(\mathcal{A})$ is defined as follows:
\begin{equation*}
\operatorname{GrTr}(\mathcal{A}) = \mathcal{A}_0[[z]]/\mathbb{C}[[z]] \cdot \{ab-z^\eta ba\},
\end{equation*}
where $a \in \mathcal{A}_\eta$, $b \in \mathcal{A}_{-\eta}$ s.t. $\eta \in \operatorname{Span}_{\mathbb{Z}_{\geqslant 0}}(\Sigma_-)$.

\

The following general properties of $\operatorname{GrTr}(\mathcal{A})$ are known by \cite{DKK}.

\begin{itemize}
    \item Module $\operatorname{GrTr}(\mathcal{A})$   is finitely generated over $\mathbb{C}[[z]]$.
    \item The natural map $H[[z]] \rightarrow \operatorname{GrTr}(\mathcal{A})$ is surjective.
    \item The $z=0$ specialization of $\operatorname{GrTr}(\mathcal{A})$ is equal to $\mathsf{C}_\theta(\mathcal{A})$.
\end{itemize}


\begin{lemma}\label{lemma_tr_functions_ind_hbar}
Map $\operatorname{tr}_{M_\hbar}(-)\colon \operatorname{GrTr}(\widehat{\mathcal{A}}_{\hbar,\widehat{p}}^{\mathrm{fin}})  \rightarrow \mathbb{C}[[z]][[\hbar]]$  induces the   map 
$
\operatorname{GrTr}(\mathcal{A}_\hbar) \rightarrow \mathbb{C}[[z]][\hbar]
$
that, in turn, induces the map
\begin{equation*}
\operatorname{GrTr}(\mathcal{A}) \rightarrow \mathbb{C}[[z]]
\end{equation*}
to be denoted $\operatorname{tr}$ or $\operatorname{tr}(-)$.
\end{lemma}
\begin{proof}
Set $\mathcal{A}_\hbar := \operatorname{R}_\hbar(\mathcal{A})$.
  The map $\mathcal{A}_\hbar \rightarrow \widehat{\mathcal{A}}_{\hbar,\widehat{p}}$ induces the $\mathbb{C}^\times_\hbar$-equivariant map:
  \begin{equation*}
  \operatorname{GrTr}(\mathcal{A}_\hbar) \rightarrow \operatorname{GrTr}(\widehat{\mathcal{A}}_{\hbar,\widehat{p}}^{\mathrm{fin}})=\operatorname{GrTr}(\mathbb{W}_\hbar(T^*_p\mathcal{M}_C)) =\mathbb{C}[[z]][[\hbar]].
  \end{equation*}
  Passing to $\mathbb{C}^\times_\hbar$-finite vectors we obtain the $\mathbb{C}[[z]][\hbar]$-equivariant map 
  \begin{equation}\label{maps_between_grtr}
  \operatorname{GrTr}(\mathcal{A}_\hbar) \rightarrow \mathbb{C}[[z]][\hbar].
  \end{equation}
 Note now that the functional $\operatorname{tr}_{M_\hbar}(-) \colon \operatorname{GrTr}(\widehat{\mathcal{A}}_{\hbar,\widehat{p}}^{\mathrm{fin}}) = \mathbb{C}[[z]][[\hbar]] \rightarrow \mathbb{C}[[z]][[\hbar]]$ induces the map $\mathbb{C}[[z]][\hbar] \rightarrow \mathbb{C}[[z]][\hbar]$ (use that $\operatorname{tr}_{M_\hbar}(1)$ lives in $\mathbb{C}[[z]]$) to be denoted by the same symbol. Composing it with (\ref{maps_between_grtr}) we obtain the desired $\mathbb{C}[[z]][\hbar]$-linear map  $\operatorname{GrTr}(\mathcal{A}_\hbar) \rightarrow \mathbb{C}[[z]][\hbar]$ that restricts to the map $\operatorname{tr}\colon \operatorname{GrTr}(\mathcal{A}) \rightarrow \mathbb{C}[[z]]$ sending $1$ to $\operatorname{tr}_{M_\hbar}(1)$.


\end{proof}

 \begin{lemma}\label{lemma_descr_grtr}
 The natural map $\mathbb{C}[[z]] \rightarrow \operatorname{GrTr}(\mathcal{A})$ is an isomorphism. In particular, $\mathsf{C}_\theta(\mathcal{A})=\mathbb{C}$ (as algebras) and $\mathcal{O}_\theta(\mathcal{A})$ contains uinque irreducible object $L$. 
 \end{lemma}
 \begin{proof}
 First of all note that the map $\mathbb{C}[[z]] \rightarrow \operatorname{GrTr}(\mathcal{A})$ is surjective, this follows from Nakayama lemma combined with the fact that  $\operatorname{dim}\mathsf{C}_\theta(\mathcal{A}) \leqslant \operatorname{dim}\mathbb{C}[\mathcal{M}_C^\theta]=1$. To see that it is injective it's enough to note that the composition $\mathbb{C}[[z]] \rightarrow \operatorname{GrTr}(\mathcal{A}) \xrightarrow{\operatorname{tr}} \mathbb{C}[[z]]$ is given by the multiplication by an invertible function hence is an isomorphism. 
 \end{proof}


 \subsection{Normalized character of $L$} 

Set $D:=S^\bullet(\mathsf{a})[[z]]$ and define the algebra structure on $D$ by the following formula (see \cite[Section 3.2]{KMP}):
\begin{equation}\label{eq:mult_D}
z^{\eta}x \cdot z^\xi y = z^{\eta+\xi}xy+\langle\xi,x\rangle z^{\eta+\xi} y = z^{\eta+\xi} (x+\langle \xi,x\rangle) y
\end{equation}
for all $x,y \in \mathsf{a}$, $\eta,\xi \in \operatorname{Span}_{\mathbb{Z}_{\geqslant 0}}(\Sigma_-)$. This is a module over a (local) ring $\mathbb{C}[[z]]$.

Let $Z$ be the following vector space. It consists of formal linear combinations $\sum_{\eta \in \mathsf{a}^*}z^\eta a_\eta$ such that:
\begin{itemize}
    \item there exists a finite set $\{\kappa_1,\ldots,\kappa_r\} \subset \mathsf{a}^*$ such that for any $\eta \in \mathsf{a}^*$ such that $a_\eta \neq 0$ we have either $\eta=\kappa_i$ or $\langle \kappa_i-\eta,\theta\rangle \in \mathbb{Z}_{>0}$ for some $i \in \{1,\ldots,r\}$,
    \item for any $i \in \{1,\ldots,r\}$ and $m \in \mathbb{Z}_{>0}$ the set of $\eta \in \mathsf{a}^*$ such that $\langle \kappa_i-\eta,\theta\rangle = m$ is finite.
\end{itemize}


Algebra $D$ acts on $Z$ via the following formulas:
\begin{equation*}
  x \cdot z^\eta = \langle \eta,x\rangle z^\eta,~z^\xi \cdot z^\eta = z^{\xi+\eta}.  
\end{equation*}




Let $\kappa \in \mathsf{a}_Q^*$ be the composition $\mathsf{a}_Q \subset \mathcal{A}_0 \twoheadrightarrow \mathsf{C}_\theta(\mathcal{A})=\mathbb{C}$. Space $\mathbb{C}[[z]]$ admits an action of $D$ by the formulas (note the $\kappa$-shift):
\begin{equation}\label{eq:action_D_on_C[[z]]_zeta}
x \cdot z^\eta = \langle \eta+\kappa,x\rangle z^\eta,~z^\xi \cdot z^\eta = z^{\xi+\eta}. 
\end{equation}
Because of the $\kappa$-shift in the definition of the $D$-module, we will denote $\mathbb{C}[[z]]$ with this $D$-module structure by $\mathbb{C}[[z]]_{\kappa}$. 
\begin{lemma}\label{norm_tr_is_iso_D_mod}
The map $\operatorname{tr}$ induces an isomorphism of $D$-modules $\operatorname{GrTr}(\mathcal{A}) \iso \mathbb{C}[[z]]_\kappa$. 
\end{lemma}
\begin{proof}
It follows from Lemma \ref{lemma_descr_grtr} that $\operatorname{tr}$ is an isomorphism. The fact that $\operatorname{tr}$ is a homomophism of $D$-modules follows from the definitions. 
\end{proof}

\begin{lemma}\label{lem_sol_grtr_one_dim}
The space of $D$-module homomorphisms $\operatorname{GrTr}(\mathcal{A}) \rightarrow Z$ is equal to $\mathbb{C}$.   
\end{lemma}
\begin{proof}
By Lemma \ref{norm_tr_is_iso_D_mod}, we can identify  $\operatorname{GrTr}(\mathcal{A}) \simeq \mathbb{C}[[z]]$ so we are dealing with $D$-equivariant maps $\mathbb{C}[[z]] \rightarrow \mathbb{C}[[z]]$. Any such map is uniquely determined by the image of $1$ (use that it is $\mathbb{C}[[z]]$-linear). Let $f(z) \in \mathbb{C}[[z]]$ be the image of $1$. It must be annihilated by the action of any $x \in \mathsf{a}_Q$ implying that $f(z)$ is a constant as desired.
\end{proof}

\begin{lemma}
The normalized $\mathsf{a}_Q$-character of $L$ is equal to the $\mathsf{A}_Q$-character of $\mathbb{C}[T_p^-\mathcal{M}_C]$.
\end{lemma}
\begin{proof}
Let $\operatorname{tr}_{L}(-)\colon \operatorname{GrTr}(\mathcal{A}) \rightarrow Z$ be the graded trace of $L$. Consider also $\operatorname{tr}(-)$ constructed in Lemma \ref{lemma_tr_functions_ind_hbar}. Both of them define homomorphisms of $D$
-modules $\operatorname{GrTr}(\mathcal{A}) \rightarrow Z$ so
by Lemma \ref{lem_sol_grtr_one_dim}  $\operatorname{tr}_{L}(-)=\operatorname{tr}(-)$.

Now, $\operatorname{tr}(1)=\operatorname{tr}_{M_\hbar}(1)$ is equal to $\operatorname{tr}_{\mathbb{C}[T^-_p\mathcal{M}_C][[\hbar]]}(1)$. 
The claim then follows from the fact that the commutator action of $\mathsf{a}_Q$ on $\mathcal{A}$ integrates to the standard action  $\mathsf{A}_Q \curvearrowright \mathcal{A}$.
\end{proof}

Combining all of the above we obtain the following proposition.
Let $\widetilde{\operatorname{ch}}(L)$ be the $\mathsf{a}_Q$-character of $L$ normalized in such a way that it starts with $1$. 

\begin{proposition}\label{prop: char irreducible}
Assuming Conjecture \ref{conj: dual tangent space} holds and for $\theta>0$ we have:
\begin{equation}\label{eq: char generalized prefundamental}
\widetilde{\operatorname{ch}}(L) = \prod_{\alpha \in \Phi^{-,\mathrm{re}}_{\mu}}\frac{1}{(1-e^\alpha)^{\langle \alpha,\mu\rangle}},
\end{equation}    
where $L$ is the unique irreducible object in the category $\mathcal{O}_\theta(\mathcal{A})$.
\end{proposition}

 \begin{remark}
  By taking into account the  $\mathbb{C}^\times_\hbar$-action on $T_p^{-}\mathcal{M}_C$ we obtain a natural deformation of the normalized $\mathsf{a}_Q$-character of $L$ given by $\prod_{\alpha \in \Phi^{-,\mathrm{re}}_{\mu}}\prod_{s=1}^{\langle \alpha,\mu\rangle}\frac{1}{1-e^{s\hbar}e^\alpha}$. This is compatible  with the additional  ``$v$''-parameter in \cite[Theorem 1.3]{Negut_char}.
 \end{remark}

\begin{remark}
For $Q$ being a finite type Dynkin quiver, we prove Conjecture \ref{conj: dual tangent space} in Section \ref{eq: section proofs ADE} (by combining the argument similar to the one in \cite[Lemma 4.4]{KamnitzerTingleyWebsterWeeksYacobi} with \cite[Theorem 3.1(1)]{Kr}). In particular, we obtain a character formula (\ref{eq: char generalized prefundamental}) for the irreducible module over the truncated shifted Yangian $Y^\lambda_{w\lambda}$ for arbitrary dominant $\lambda$ and $w \in W$. Another proof of this result will appear in the upcoming paper \cite{min_chamber_mod} by Artem Kalmykov, Joel Kamnitzer, Alexis Leroux-Lapierre, and Th\'eo Pinet. A closely related result
was also obtained by Negut in \cite[Theorem 1.3]{Negut_char} by completely different methods.
\end{remark}

\begin{remark}\label{eq: prefund building blocks}
Let's note that the quantized counterpart of Conjecture \ref{conj: open embedding} should claim that there exists a homomorphism between the corresponding quantized Coulomb branches. Then, taking modules $L$ as above, one would obtain a collection of modules over the quantization of $\mathcal{M}_C$ labeled by nonsingular fixed points of $\widetilde{\mathcal{M}}_{C,\nu}$. Let's decompose $\lambda=\lambda_1+\ldots+\lambda_N$ into the sum of fundamental weights. Then, we obtain the decomposition $\mu=\mu_1+\ldots+\mu_N$ ($\mu_i=w(\lambda_i)$). The module $L$ should be isomorphic to a tensor product of modules $L_i$ corresponding to $\lambda_i,\mu_i$. For $Q$ of finite type the corresponding modules $L_i$ are so-called {\emph{chamber}} modules over $Y_{\mu_i}$ (we follow the terminology of the upcoming paper \cite{min_chamber_mod} by  Kamnitzer, Leroux-Lapierre, Pinet, and  Weekes). For $\mu =\lambda,w_0\lambda$, the corresponding modules are so-called positive and negative prefundamental modules over $Y_{\mu_i}$ (as in \cite[Section 3]{Hern_Zhang}).
\end{remark}

\section{Coulomb branches for finite type Dynkin quivers}\label{subsec: proof for ADE}
The goal of this section is to prove Conjectures  \ref{conj: dual tangent space}, \ref{conj: nonsingular Coulomb point Higgs}, and  \ref{conj: open embedding}  for finite type Dynkin quivers. Conjecture \ref{conj: nonsingular Coulomb point Higgs} follows by combining the results of \cite{MW}, \cite{BFN_slices} and Conjecture \ref{conj: open embedding} follows from the results of \cite{KP}. Our argument for the proof of Conjecture \ref{conj: dual tangent space} is obtained by combining the argument similar to the one in \cite[Lemma 4.4]{KamnitzerTingleyWebsterWeeksYacobi} with \cite[Theorem 3.1(1)]{Kr}. From now on, we assume that $Q$ is of finite type.

\subsection{Realization via slices in affine Grassmannians}
In this section, we recall all of the objects we need to prove conjectures above. Recall that $\mathfrak{g}=\mathfrak{g}_Q$ is the simple Lie algebra corresponding to $Q$. Let $G=G_Q$ be the adjoint Lie group with Lie algebra $\mathfrak{g}$ and choose a Borel subalgebra $B$ with opposite Borel $B_{-}$. Let $\mathsf{A}_Q$ be the maximal torus of $G$ (i.e. the intersection of $B$ and $B_-$) and let $\mathfrak{a}_Q$ be its Lie algebra. Recall that for $i \in Q_0$ we will denote by $\omega_i^\vee, \alpha_i^\vee$ the corresponding fundamental coweight and simple coroot for $\mathfrak{g}$.
Given $\dv,\dw \in \mathbb{N}^{Q_{0}}$, we define $\lambda, \mu$ as in (\ref{eq: def lambda mu}). Recall that $W$ is the Weyl group of $G$ and let $w_0 \in W$ be its longest element.

Following~\cite{BFN_slices}, we define the generalized transversal slice in the affine Grassmannian $\overline{\mathcal W}_\mu^\lambda$. 
    It is the moduli space of the data
    $(\mathcal{P},\sigma,\phi)$, where: 
    \begin{itemize}
        \item 
    $\mathcal{P}$ is a $G$-bundle  on $\mathbb{P}^{1};$
    \item
     $\sigma\colon \mathcal{P}^{\mathrm{triv}}|_{\mathbb{P}^{1} \setminus \{0\}} \iso \mathcal{P}|_{\mathbb{P}^{1} \setminus \{0\}}$ --
    a trivialization, having a pole of degree $\leqslant \lambda$ at the point $0$. This means that the point $(\mathcal{P}, \sigma) \in \operatorname{Gr}_G$ lies in $\overline{\operatorname{Gr}}{}^{\lambda}_G:=\overline{G_{\mathcal{O}} \cdot z^{\lambda}}$;
    \item
    $\phi$ is a $B$-structure on $\mathcal{P}$ (i.e. a $B$-subbundle of  $\mathcal{P}$) of degree $w_{0}(\mu)$, having fiber $B_{-}$ at $\infty$ (with respect to $\sigma$).
     \end{itemize}

The open subvariety $\mathcal{W}^\lambda_\mu \subset \overline{\mathcal{W}}^\lambda_\mu$ consists of triples $(\mathcal{P},\sigma,\phi)$ above such that the degree of a pole of $\sigma$ at zero is {\emph{equal}} to $\lambda$.

It follows from \cite[Section 2(xi)]{BFN_slices}  that $\overline{\mathcal{W}}^\lambda_\mu$ has the following matrix description:
\begin{equation}\label{eq: matrix description of slices}
\overline{\mathcal{W}}^\lambda_\mu \simeq U[[z^{-1}]]_1 \mathsf{A}_{Q}[[z^{-1}]]_1 z^\mu U_-[[z^{-1}]]_1 \cap \overline{(G[z]z^\lambda G[z])}.
\end{equation}
where the notation is as in \cite{BFN_slices}. The open subvariety $\mathcal{W}^\lambda_\mu \subset \overline{\mathcal{W}}^\lambda_\mu$ consists of points $g \in \overline{\mathcal{W}}^\lambda_\mu$ that lie in $G[z]z^\lambda G[z]$. 



It follows from \cite[Theorem 3.10]{BFN_slices} for ADE types and \cite[Theorem 4.1]{NW} for BCFG types that we have an isomorphism: 
\begin{equation}\label{eq: coulomb vs slice}
\mathcal{M}_C(\dv,\dw) \simeq 
\overline{\mathcal W}_{\mu}^{\lambda}. 
\end{equation}

\begin{remark}
For a coweight $\eta \in \mathfrak{a}_Q$ of $\mathfrak{g}$ we will denote by $\eta^*$ the coweight $-w_0(\eta)$ (here $w_0$ is the longest element of the Weyl group $W$ acting naturally on $\mathfrak{a}_Q$). Strictly speaking, in \cite[Theorem 3.10]{BFN_slices}, authors construct an isomorphism $\mathcal{M}_C(\dv,\dw) \simeq 
\overline{\mathcal W}_{\mu^*}^{\lambda^*}$.
Note now that there exists an identification: 
\begin{equation}\label{eq: slice vs slice for star}
\overline{\mathcal{W}}^{\lambda}_\mu \simeq \overline{\mathcal{W}}^{\lambda^*}_{\mu^*}.
\end{equation}
 To see that, recall the matrix description of slices \eqref{eq: matrix description of slices}. The element $w_0 \in W=N_G({\mathsf{A}}_Q)/{\mathsf{A}}_Q$ lifts to some $\dot{w}_0 \in N_G(\mathsf{A}_Q) \subset G$. We have an automorphism of $G$ given by $g \mapsto \dot{w}_0g^{-1}\dot{w}_0^{-1}$. It induces the isomorphism $G((z^{-1})) \iso G((z^{-1}))$, which, in turn, induces the desired identification (\ref{eq: slice vs slice for star}). Combining the identification \eqref{eq: slice vs slice for star} with \cite[Theorem 3.10]{BFN_slices} we obtain the identification \eqref{eq: coulomb vs slice}.
\end{remark}

The torus $\pi_1(G)^{\wedge}$ identifies with the maximal torus $\mathsf{A}_Q \subset G_Q$ acting naturally on $\overline{\mathcal W}_\mu^\lambda$ (changing the trivialization $\sigma$ at $\infty$). The action of $\mathbb{C}^\times_\hbar$ corresponds to the loop rotation action on $\overline{\mathcal W}_\mu^\lambda$ via the automorphisms of the curve $\mathbb{P}^1$.


Let ${\mathsf{A}} \subset \prod_{i}GL_{\dw_i}$ be the flavor torus (i.e. the framing torus on the quiver variety side). Pick a cocharacter $\nu\colon \mathbb{C}^\times \rightarrow {\mathsf{A}}$. Let $N \in \mathbb{Z}_{\geqslant 1}$ be the number of eigenvalues of $\nu$.
The choice of $\nu$ corresponds to the decomposition:
\begin{equation*}
\dw_i=\dw_i^{(1)}+\ldots+\dw_{i}^{(N)},~ i \in Q_0.
\end{equation*}
Recall that we can associate to $\nu$ the partial resolution $\widetilde{\mathcal{M}}_{C,\nu}$ of $\mathcal{M}_{C}$. Let us recall the description of $\widetilde{\mathcal{M}}_{C,\nu}$ in the language of affine Grassmannian slices.

Consider the $N$-tuple of dominant coweights $\underline{\lambda}:=(\lambda_1,\ldots,\lambda_N)$ of $G_Q$, where
\begin{equation*}
\lambda_j = \sum_{i \in Q_0}\dw_i^{(j)}\omega_i^\vee.
\end{equation*}
Clearly, $\lambda=\lambda_1+\ldots+\lambda_N$.
Set $\mathsf{T}_Q := {\mathsf{A}}_Q \times \mathbb{C}^\times_\hbar$. It follows from \cite[Section 5]{BFN_line} that we have a $\mathsf{T}_Q$-equivariant isomorphism of varieties:
\begin{equation}\label{eq: convolution is BFN res}
\widetilde{\mathcal{W}}^{\underline{\lambda}}_{\mu} \simeq \widetilde{\mathcal{M}}_{C,\nu},
\end{equation}
where $\widetilde{\mathcal{W}}^{\underline{\lambda}}_{\mu}$ is the convolution diagram over $\overline{\mathcal{W}}^{\lambda}_{\mu}$ defined as follows.

The variety $\widetilde{\mathcal{W}}^{\underline{\lambda}}_{\mu}$ is the moduli space of the data 
\begin{equation*}
(\mathcal{P}^{\mathrm{triv}}=\mathcal{P}_0,\mathcal{P}_1,\ldots,\mathcal{P}_N,\sigma_1,\ldots,\sigma_N,\phi),
\end{equation*} 
where
\begin{itemize}	
	\item $\mathcal{P}_i$ is a $G$-bundle on $\mathbb{P}^1$;
	
	\item  $\sigma_i\colon \mathcal{P}_{i-1}|_{\mathbb{P}^1 \setminus \{0\}} \iso \mathcal{P}_i|_{\mathbb{P}^1 \setminus \{0\}}$ is an isomorphism having a pole of degree $\leqslant \lambda_i$ at zero;
	
	\item $\phi$ is a $B$-structure on $\mathcal{P}_N$ of degree $w_0\mu$, having fiber $B_-$ at $\infty$ with respect to $\sigma_N \circ \sigma_{N-1} \circ \ldots \circ \sigma_1$.
\end{itemize}

We have a natural (proper and birational) morphism $\widetilde{\mathcal{W}}^{\underline{\lambda}}_{\mu} \rightarrow \overline{\mathcal{W}}^{\lambda}_{\mu}$ given by 
\begin{equation*}
(\mathcal{P}_0,\mathcal{P}_1,\ldots,\mathcal{P}_N,\sigma_1,\ldots,\sigma_N,\phi) \mapsto (\mathcal{P}_N, \sigma_N \circ \sigma_{N-1} \circ \ldots \circ \sigma_1,\phi).
\end{equation*}

Choose generic $\theta$. We have the following lemma.
\begin{lemma}
The set of $\theta$-fixed points $(\overline{\mathcal{W}}^\lambda_\mu)^\theta$ consists of one point if $\mu$ is a weight of $V(\lambda)$, the irreducible representation of the Langlands dual of $\mathfrak{g}$ of highest weight $\lambda$, and is empty otherwise. We will denote this fixed point by $z^\mu$.    
\end{lemma}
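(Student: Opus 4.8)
The plan is to compute the fixed locus directly in the affine-Grassmannian realization \eqref{eq: coulomb vs slice}, where $\mathcal{M}_C \simeq \overline{\mathcal{W}}^\lambda_\mu$ and the cocharacter $\theta$ becomes a cocharacter of the maximal torus $\mathsf{A}_Q = \pi_1(G)^\wedge$ acting by changing the trivialization $\sigma$ at $\infty$. First I would reduce the $\theta$-fixed locus to the full torus fixed locus. For generic $\theta$, in particular for $\theta = (1,\dots,1)$, the induced cocharacter $\mathbb{C}^\times \to \mathsf{A}_Q$ is \emph{regular}, i.e. pairs nontrivially with every root of $\mathfrak{g}$. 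Using the matrix description \eqref{eq: matrix description of slices}, in which $\mathsf{A}_Q$ acts by conjugation and every element factors uniquely as $u\, a\, z^\mu v$ with $u \in U[[z^{-1}]]_1$, $a \in \mathsf{A}_Q[[z^{-1}]]_1$, $v \in U_-[[z^{-1}]]_1$, conjugation by $\theta(s)$ rescales each root component of $u$ and of $v$ by a nonzero power of $s$. Regularity of $\theta$ therefore forces $u = v = 1$ on the $\theta$-fixed locus, so that $(\overline{\mathcal{W}}^\lambda_\mu)^\theta = (\overline{\mathcal{W}}^\lambda_\mu)^{\mathsf{A}_Q}$ and every fixed point has the form $a(z^{-1})\,z^\mu$ with $a \in \mathsf{A}_Q[[z^{-1}]]_1$.

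Next I would pin down $a$. Viewing $\overline{\mathcal{W}}^\lambda_\mu$ inside $\operatorname{Gr}_G$ via $(\mathcal{P},\sigma,\phi)\mapsto(\mathcal{P},\sigma)$, the conjugation action of $\mathsf{A}_Q$ becomes left multiplication, whose fixed points in $\operatorname{Gr}_G$ are exactly the lattices $z^\nu G_{\mathcal{O}}$ for $\nu \in X_*(\mathsf{A}_Q)$; and each $a(z^{-1})z^\mu G_{\mathcal{O}}$ is indeed such a point. The slice is contained in the transversal $\mathcal{S}_\mu$, the image of the first congruence subgroup orbit $G[[z^{-1}]]_1\cdot z^\mu$. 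The key observation is that the loop-rotation torus $\mathbb{C}^\times_\hbar$ contracts $G[[z^{-1}]]_1$ to the identity as $s\to\infty$ (it rescales the coefficient of $z^{-k}$ by $s^{-k}$), hence contracts $\mathcal{S}_\mu$ to the single point $z^\mu$. Since any $\mathsf{A}_Q$-fixed point $z^\nu$ is a constant loop and therefore also fixed by $\mathbb{C}^\times_\hbar$, it can lie in $\mathcal{S}_\mu$ only if it equals the unique $\mathbb{C}^\times_\hbar$-fixed point $z^\mu$; equivalently $z^\mu G_{\mathcal{O}} = a(z^{-1})z^\mu G_{\mathcal{O}}$ forces $a = 1$ because $\big(z^\mu G_{\mathcal{O}} z^{-\mu}\big)\cap G[[z^{-1}]]_1 = \{1\}$. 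Thus the only possible fixed point is $z^\mu$ itself.

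Finally I would settle existence. The point $z^\mu$ lies on $\overline{\mathcal{W}}^\lambda_\mu$ precisely when it lies on $\overline{\operatorname{Gr}}{}^\lambda_G$, and by geometric Satake the $\mathsf{A}_Q$-fixed points of $\overline{\operatorname{Gr}}{}^\lambda_G$ are exactly the $z^\nu$ with $\nu$ a weight of the irreducible representation $V(\lambda)$ of the Langlands dual. Hence $(\overline{\mathcal{W}}^\lambda_\mu)^\theta = \{z^\mu\}$ when $\mu$ is a weight of $V(\lambda)$ and is empty otherwise. The main obstacle is the middle step: making rigorous, in the generalized (non-dominant $\mu$) slice with its $B$-structure datum $\phi$, both the identification of the $\mathsf{A}_Q$-action with left multiplication on $\operatorname{Gr}_G$ and the contraction statement, so that one genuinely concludes $a = 1$ rather than merely that the image in $\operatorname{Gr}_G$ equals $z^\mu$. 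The remaining inputs, namely regularity of generic $\theta$ and the Satake description of torus fixed points in $\overline{\operatorname{Gr}}{}^\lambda_G$, are standard.
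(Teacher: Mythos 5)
The paper does not actually prove this lemma; it cites \cite[Lemma 2.8]{Kr}, so there is no internal argument to compare against. Your reconstruction follows what is essentially the standard proof and is correct in outline: regularity of the cocharacter attached to generic $\theta$ kills the $U$ and $U_-$ factors in the matrix description \eqref{eq: matrix description of slices}, reducing a putative fixed point to the form $a(z^{-1})z^\mu$ with $a \in \mathsf{A}_Q[[z^{-1}]]_1$; then one shows $a=1$; and existence reduces to the standard fact that $z^\mu \in \overline{\operatorname{Gr}}{}^\lambda$ iff $\mu$ is a weight of $V(\lambda)$. The ``main obstacle'' you flag at the end (non-injectivity of the projection to $\operatorname{Gr}_G$) is in fact not an obstacle at this stage: once $u=v=1$ the remaining ambiguity is purely torus-valued, $a$ commutes with $z^\mu$, and $a z^\mu G_{\mathcal{O}}=z^\mu G_{\mathcal{O}}$ gives $a \in G_{\mathcal{O}} \cap \mathsf{A}_Q[[z^{-1}]]_1 = \mathsf{A}_Q[[z]]\cap \mathsf{A}_Q[[z^{-1}]]_1=\{1\}$.

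One justification as written is wrong, though harmlessly so. The identity $\bigl(z^\mu G_{\mathcal{O}} z^{-\mu}\bigr)\cap G[[z^{-1}]]_1=\{1\}$ fails for the full group: for $SL_2$ with $z^\mu=\operatorname{diag}(z^{-1},z)$ and $g=\left(\begin{smallmatrix}1 & c\\ 0 & 1\end{smallmatrix}\right)\in G_{\mathcal{O}}$ one gets $z^\mu g z^{-\mu}=\left(\begin{smallmatrix}1 & cz^{-2}\\ 0 & 1\end{smallmatrix}\right)\in G[[z^{-1}]]_1$. You only need the torus part of this intersection, where it is true, as above. A cleaner route that bypasses $\operatorname{Gr}_G$ and the contraction argument entirely: any $g\in \overline{G[z]z^\lambda G[z]}$ has $g$ and $g^{-1}$ with Laurent-polynomial matrix entries in every representation, so for a torus-valued $a(z^{-1})z^\mu$ each coordinate $a^\eta$ is a unit of $\mathbb{C}[z,z^{-1}]$, hence a monomial, and the normalization $a(\infty)=1$ forces $a^\eta=1$ for all weights $\eta$, i.e.\ $a=1$. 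With that repair your argument is a complete substitute for the external citation.
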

\begin{proof}
See \cite[Lemma 2.8]{Kr}.
\end{proof}

More generally, it follows from \cite[Proposition 5.9]{KP} that the $\theta$-fixed points on $\widetilde{\mathcal{W}}^{\underline{\lambda}}_\mu$ are isolated and are in bijection with $N$-tuples of coweights $\underline{\mu}=(\mu_1,\ldots,\mu_N)$ such that  $\mu=\mu_1+\ldots+\mu_N$ and $\mu_i$ is a weight of $V(\lambda_i)$. We will denote the point corresponding to $\underline{\mu}$ by $z^{\underline{\mu}}$. In \cite[2(vi)]{BFN_slices}, the authors defined the so-called {\emph{multiplication morphism}}:
$
{\bf{m}}^{\underline{\lambda}}_{\underline{\mu}}\colon \overline{\mathcal{W}}^{\lambda_1}_{\mu_1} \times \ldots \times \overline{\mathcal{W}}^{\lambda_N}_{\mu_N} \rightarrow \overline{\mathcal{W}}^{\lambda}_\mu 
$
that lifts to a morphism:
\begin{equation}\label{eq:mult morphism slices}
\tilde{{\bf{m}}}^{\underline{\lambda}}_{\underline{\mu}}\colon \overline{\mathcal{W}}^{\lambda_1}_{\mu_1} \times \ldots \times \overline{\mathcal{W}}^{\lambda_N}_{\mu_N} \rightarrow \widetilde{\mathcal{W}}^{\underline{\lambda}}_\mu. 
\end{equation}
The morphism $\tilde{\bf{m}}^{\underline{\lambda}}_{\underline{\mu}}$ is known to be an {\emph{open embedding}} (see \cite[Proposition 5.7]{KP}). It is also known to be ${\mathsf{A}}_Q$-equivariant, and moreover, it becomes $\mathbb{C}^\times_{\hbar}$-equivariant after appropriately twisting the $\mathbb{C}^\times_{\hbar}$-action on the source of (\ref{eq:mult morphism slices}) (see \cite[Section 5.4]{KP} for the details). It sends the unique  $\mathsf{A}_Q$-fixed point of the domain to the point $z^{\underline{\mu}} \in \widetilde{\mathcal{W}}^{\underline{\lambda}}_\mu$.

We also have a natural morphism $p\colon \overline{\mathcal{W}}^\lambda_\mu \rightarrow \overline{\operatorname{Gr}}^\lambda $ given by forgetting the $B$-structure $\phi$.

\subsection{Proofs for finite type Dynkin quivers}\label{eq: section proofs ADE}

From now on, we assume that $\overline{\mathcal{W}}^\lambda_\mu$ contains a $\theta$-fixed point, i.e., that $\mu$ is a weight of $V(\lambda)$. Note that this precisely corresponds to $\widetilde{\mathcal{M}}_H:=\widetilde{\mathcal{M}}_H({\mathsf{v}},{\mathsf{w}})$ being nonempty (compare with the proof of Lemma \ref{lem: nakajima quiver point ADE} below).  

\begin{lemma}\label{lem: smooth fixed on slices}
The unique  $\theta$-fixed point $z^\mu \in \overline{\mathcal{W}}^\lambda_\mu$ is nonsingular iff $\mu \in W\lambda$.    
\end{lemma}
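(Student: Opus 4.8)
The plan is to prove both implications at once by using the symplectic leaf structure of $\overline{\mathcal{W}}^\lambda_\mu$ to reduce to the case of a \emph{dominant} lower index, and then transporting the question to the singularities of the affine Grassmannian $\overline{\operatorname{Gr}}^\lambda$ via the projection $p$.

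First I would recall that $\overline{\mathcal{W}}^\lambda_\mu$ is a conical symplectic singularity, stratified by the open generalized slices $\mathcal{W}^\nu_\mu$ as $\nu$ ranges over the dominant coweights with $\mu \leq \nu \leq \lambda$, the top stratum $\mathcal{W}^\lambda_\mu$ being open, dense, and smooth. The fixed point $z^\mu$ lies in the stratum $\mathcal{W}^{\mu^+}_\mu$, where $\mu^+$ is the dominant representative of $W\mu$, since $z^\mu \in G[z]z^\nu G[z]$ exactly when $\nu = \mu^+$. In particular, if $\mu \in W\lambda$ then $\mu^+ = \lambda$, so $z^\mu$ sits in the open dense smooth locus $\mathcal{W}^\lambda_\mu$ and is a smooth point; this already settles the ``if'' direction. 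For the converse I would invoke the transverse-slice (factorization) property of generalized slices: \'etale-locally near $z^\mu$ one has
\[
\overline{\mathcal{W}}^\lambda_\mu \cong \mathcal{W}^{\mu^+}_\mu \times \overline{\mathcal{W}}^\lambda_{\mu^+},
\]
with $\mathcal{W}^{\mu^+}_\mu$ smooth, a decomposition whose dimensions add correctly since $\dim \mathcal{W}^{\mu^+}_\mu + \dim \overline{\mathcal{W}}^\lambda_{\mu^+} = 2\operatorname{ht}(\lambda-\mu) = \dim \overline{\mathcal{W}}^\lambda_\mu$. Thus $z^\mu$ is smooth in $\overline{\mathcal{W}}^\lambda_\mu$ if and only if $z^{\mu^+}$ is smooth in $\overline{\mathcal{W}}^\lambda_{\mu^+}$, reducing the statement to the case of a dominant lower index $\nu := \mu^+$.

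Second, for dominant $\nu$ I would identify $\overline{\mathcal{W}}^\lambda_\nu$, through the map $p$ forgetting the $B$-structure, with the Mirkovi\'c--Vilonen transverse slice to the orbit $\operatorname{Gr}^\nu$ inside $\overline{\operatorname{Gr}}^\lambda$ at the point $z^\nu$. Under this identification the singularity type of $\overline{\mathcal{W}}^\lambda_\nu$ at $z^\nu$ is precisely the transverse singularity type of $\overline{\operatorname{Gr}}^\lambda$ along its stratum $\operatorname{Gr}^\nu$. Since the smooth locus of $\overline{\operatorname{Gr}}^\lambda$ is exactly the open orbit $\operatorname{Gr}^\lambda$, the point $z^\nu$ is smooth in $\overline{\operatorname{Gr}}^\lambda$ if and only if $\nu = \lambda$; equivalently $\overline{\mathcal{W}}^\lambda_\nu$ is smooth at $z^\nu$ iff $\nu = \lambda$ (in which case the slice is a single reduced point). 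Combining with the reduction above, $z^\mu$ is smooth iff $\mu^+ = \lambda$ iff $\mu \in W\lambda$.

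The main obstacle is the transverse-slice factorization invoked in the reduction step, namely that the transverse singularity of $\overline{\mathcal{W}}^\lambda_\mu$ along the leaf through $z^\mu$ is modeled on $\overline{\mathcal{W}}^\lambda_{\mu^+}$. I would supply this either from the general theory of conical symplectic singularities (Kaledin's leaf stratification, with the $\mathcal{W}^\nu_\mu$ as the symplectic leaves) together with the known local models for these slices, or directly from the matrix description \eqref{eq: matrix description of slices} by writing an explicit $\mathsf{A}_Q$-equivariant local product decomposition adapted to the factorization $z^\mu = \dot{w}\, z^{\mu^+} \dot{w}^{-1}$. The identification of $\overline{\mathcal{W}}^\lambda_\nu$ (dominant $\nu$) with the classical Mirkovi\'c--Vilonen slice, and the fact that the smooth locus of $\overline{\operatorname{Gr}}^\lambda$ is the open orbit, are standard and can be cited.
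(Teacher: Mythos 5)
Your proposal is correct, but it takes a genuinely different route from the paper. The paper's proof is a two-line citation: by Muthiah--Weekes (combined with \cite[Remark 3.19]{BFN_slices}) the regular locus of $\overline{\mathcal{W}}^\lambda_\mu$ is exactly the open stratum $\mathcal{W}^\lambda_\mu$, and $z^\mu$ lies in $\mathcal{W}^\lambda_\mu$ iff $\mu\in W\lambda$ --- which is also the observation powering your ``if'' direction. What you do instead for the ``only if'' direction is essentially re-derive the relevant special case of that cited theorem: locate $z^\mu$ in the stratum $\mathcal{W}^{\mu^+}_\mu$, factor \'etale-locally as $\mathcal{W}^{\mu^+}_\mu\times\overline{\mathcal{W}}^\lambda_{\mu^+}$ to reduce to dominant lower index, and then invoke the classical fact (Evens--Mirkovi\'c, Malkin--Ostrik--Vybornov) that the smooth locus of $\overline{\operatorname{Gr}}^\lambda$ is the open orbit. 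This is a legitimate and more self-contained argument, and it has the virtue of making visible \emph{why} the smooth locus is what it is; but be aware that the transverse-slice factorization you flag as the main obstacle is of essentially the same depth as the theorem the paper cites (indeed it is close to how that theorem is proved), so you are not avoiding the hard input, only relocating it. Two smaller remarks: (i) in the ``if'' direction it is cleanest to note that $\mu\in W\lambda$ forces $\mu^+=\lambda$, so $\overline{\mathcal{W}}^\lambda_\mu=\mathcal{W}^\lambda_\mu$ consists of a single (smooth) stratum; (ii) for the converse you can bypass the full identification of the transverse slice with $\overline{\mathcal{W}}^\lambda_{\mu^+}$ --- since $\overline{\mathcal{W}}^\lambda_\mu$ is a symplectic singularity, its smooth locus is a single symplectic leaf, so knowing that the leaves are the strata $\mathcal{W}^\nu_\mu$ already forces the smooth locus to be the dense stratum $\mathcal{W}^\lambda_\mu$, which excludes $z^\mu$ when $\mu^+\neq\lambda$.
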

\begin{proof}
It follows from \cite[Theorem 1.2]{MW} combined with \cite[Remark 3.19]{BFN_slices} that the regular locus of  $\overline{\mathcal{W}}^\lambda_\mu$ is equal to $\mathcal{W}^\lambda_\mu$. It remains to note that $z^\mu \in \mathcal{W}^\lambda_\mu$ iff $\mu \in W\lambda$.   
\end{proof}

\begin{lemma}\label{lem: nakajima quiver point ADE}
The variety $\widetilde{\mathcal{M}}_H$ is a point iff $\mu \in W\lambda$. 
\end{lemma}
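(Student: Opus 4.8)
The plan is to reduce everything to the dimension formula for Nakajima quiver varieties together with the standard nonemptiness criterion. Since $\theta$ is generic, $\widetilde{\mathcal{M}}_H$ is smooth, and by Proposition \ref{prop: generic theta} it is connected whenever it is nonempty; hence $\widetilde{\mathcal{M}}_H$ is a point if and only if it is nonempty and $\dim \widetilde{\mathcal{M}}_H = 0$. First I would recall Nakajima's dimension formula $\dim \widetilde{\mathcal{M}}_H = 2\langle \dv,\dw\rangle - \langle \dv, C\dv\rangle$ (valid when the variety is nonempty), where $C$ is the Cartan matrix of $Q$, and rewrite it in terms of weights. Using $\lambda = \sum_i \dw_i \omega_i^\vee$, $\mu = \lambda - \sum_i \dv_i\alpha_i^\vee$ and the pairings $(\omega_i,\alpha_j) = \delta_{ij}$, $(\alpha_i,\alpha_j) = C_{ij}$, a one-line computation gives $(\lambda,\lambda) - (\mu,\mu) = (\lambda-\mu,\lambda+\mu) = 2\langle\dv,\dw\rangle - \langle\dv,C\dv\rangle$, so that $\dim \widetilde{\mathcal{M}}_H = (\lambda,\lambda) - (\mu,\mu)$.

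With this in hand, both directions follow once I invoke the criterion that $\widetilde{\mathcal{M}}_H$ is nonempty if and only if $\mu$ is a weight of $V(\lambda)$ (this is exactly the equivalence flagged in the paragraph preceding the lemma). For the direction $(\Leftarrow)$: if $\mu \in W\lambda$ then $\mu$ is an extremal weight of $V(\lambda)$, so $\widetilde{\mathcal{M}}_H \neq \emptyset$, while $(\mu,\mu) = (\lambda,\lambda)$ forces $\dim \widetilde{\mathcal{M}}_H = 0$; connectedness then makes it a single point. For $(\Rightarrow)$: if $\widetilde{\mathcal{M}}_H$ is a point it is nonempty, so $\mu$ is a weight of $V(\lambda)$, and $\dim = 0$ gives $(\mu,\mu) = (\lambda,\lambda)$. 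I would finish using the elementary fact that the weights of $V(\lambda)$ lie in the convex hull of $W\lambda$ and that $\nu \mapsto (\nu,\nu)$ is strictly convex: a weight of maximal norm $(\lambda,\lambda)$ must be a vertex of this polytope, i.e. an element of $W\lambda$.

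The main obstacle is really just the nonemptiness criterion ``nonempty $\iff \mu \in \operatorname{wt} V(\lambda)$'', which is the only genuinely nontrivial input; I would cite Nakajima for it (alternatively, one direction is automatic and the other can be seen via the slice realization $\mathcal{M}_C(\dv,\dw) \simeq \overline{\mathcal{W}}^\lambda_\mu$ of \eqref{eq: coulomb vs slice}, which is nonempty exactly when $\mu$ is a weight of $V(\lambda)$). Everything else is bookkeeping: the dimension formula is standard, and the concluding step is the strict convexity of the squared norm on $\operatorname{conv}(W\lambda)$. As a remark, the implication $(\Leftarrow)$ can also be proved without the nonemptiness criterion by using reflection functors to reduce $\mu = w\lambda$ to the case $\mu = \lambda$, i.e. $\dv = 0$, where $\widetilde{\mathcal{M}}_H$ is manifestly a point.
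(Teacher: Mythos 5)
Your proposal is correct and follows essentially the same route as the paper: both reduce the statement to the dimension formula $\dim\widetilde{\mathcal{M}}_H=(\lambda,\lambda)-(\mu,\mu)$ together with the fact that nonemptiness is equivalent to $\mu$ being a weight of $V(\lambda)$, and then show that a weight of $V(\lambda)$ with $(\mu,\mu)=(\lambda,\lambda)$ must lie in $W\lambda$. The only cosmetic difference is in that last step, where you invoke strict convexity of the squared norm on $\operatorname{conv}(W\lambda)$ while the paper expands $(\lambda,\lambda)-(\mu^+,\mu^+)=(\lambda-\mu^+,\lambda-\mu^+)+2(\mu^+,\lambda-\mu^+)$ for the dominant representative $\mu^+$ and observes both summands are nonnegative.
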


\begin{proof} Assume $\mu \in W\lambda$. For $\theta>0$, $\widetilde{\mathcal{M}}_{H}$ corresponds to the $\mu$ weight space of the irreducible representation with highest weight $\lambda$. Hence it is nonempty. Since $\dim \widetilde{\mathcal{M}}_H = (\lambda,\lambda)-(\mu,\mu)=0$, $\widetilde{\mathcal{M}}_{H}$ is a point. It is known, see for example Section 2.1 of \cite{etingofconjecture}, that the affinization of $\widetilde{\mathcal{M}}_{H}$ is independent of generic $\theta$. Hence $\widetilde{\mathcal{M}}_{H}$ is a point for any $\theta$. 

Now assume that $\widetilde{\mathcal{M}}_{H}$ is a point. Then $(\lambda,\lambda)=(\mu,\mu)$. Furthermore, $\mu$ must be a weight of the irreducible representation $V(\lambda)$ 

It follows that $\mu^+ \leqslant \lambda$, where $\mu^+$ is the dominant representative in $W\mu$. Our goal is to check that $\mu^+=\lambda$. Indeed note that 
\begin{equation*}
0=(\lambda,\lambda)-(\mu^+,\mu^+) = (\lambda,\lambda-\mu^+)+(\mu^+,\lambda-\mu^+)=(\lambda-\mu^+,\lambda-\mu^+)+2(\mu^+,\lambda-\mu^+)
\end{equation*}
and $(\lambda-\mu^+,\lambda-\mu^+) \geqslant 0$, $(\mu^+,\lambda-\mu^+) \geqslant 0$. So the only possibility for this sum to be zero is if $(\lambda-\mu^+,\lambda-\mu^+)=0$, hence $\lambda=\mu^+$.
\end{proof}

\begin{remark}\label{rem: quiver point mu extremal}
One can show that for an {\emph{arbitrary}} quiver $Q$ without loops the corresponding quiver variety $\widetilde{\mathcal{M}}_H$ is a point iff $\mu \in W\lambda$. The argument is standard. Let us provide a sketch. The representation $V(\lambda)$ is integrable. Now, to every pair $\eta$, $\alpha_i$ of a weight $\eta$ of $V(\lambda)$ and a simple root $\alpha_i$ we consider the set of weights $S_{\eta,\alpha_i}$ of weights of $V(\lambda)$ of the form $\eta+k\alpha_i$ for $k \in \mathbb{Z}$. It is a standard fact (see \cite[Proposition 3.6(b)]{Kac}) that $\exists p,q, \in \mathbb{Z}_{\geqslant 0}$ such that $S_{\eta,\alpha_i}=\{\eta + k \alpha_{i} \, \mid \, -p \leqslant k \leqslant q\}$ and $s_{\alpha_i}(\eta-p\alpha_i)=\eta+q\alpha_i$. Note now that $(\eta+k\alpha_i, \eta+k\alpha_i)=2k^2+2k(\eta,\alpha_i)+(\eta,\eta)$ is a {\emph{strictly convex}} function of $k$. So it must attain its maxima exactly at $-p$ and at $q$. Using this observation, one can prove that if $\mu$ is a weight of $V(\lambda)$, then $(\mu,\mu) \leqslant (\lambda,\lambda)$ and the equality holds iff $\mu \in W\lambda$ (argue by the induction on $\langle \rho,\mu\rangle$, where $\rho$ is the sum of fundamental weights for $\mathfrak{g}_Q$).
\end{remark}


\begin{proposition}\label{prop: two first conj hold}
Conjectures \ref{conj: nonsingular Coulomb point Higgs} and \ref{conj: open embedding} hold for finite type Dynkin quivers.
\end{proposition}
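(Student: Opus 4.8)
The plan is to deduce both conjectures from the facts about affine Grassmannian slices recalled in this subsection, so that essentially no new computation is required. For Conjecture \ref{conj: nonsingular Coulomb point Higgs} I would simply combine the two characterizations already established. Under the identification \eqref{eq: coulomb vs slice} we have $\mathcal{M}_C \simeq \overline{\mathcal{W}}^\lambda_\mu$, and by the lemma of \cite{Kr} recalled above the $\theta$-fixed locus $\mathcal{M}_C^\theta$ is the single point $z^\mu$ exactly when $\mu$ is a weight of $V(\lambda)$, and is empty otherwise. Lemma \ref{lem: smooth fixed on slices} says that $z^\mu$ is nonsingular iff $\mu \in W\lambda$, while Lemma \ref{lem: nakajima quiver point ADE} says that $\widetilde{\mathcal{M}}_H$ is a point iff $\mu \in W\lambda$. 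Hence the two conditions ``$\widetilde{\mathcal{M}}_H$ is a point'' and ``$\mathcal{M}_C^\theta$ is a single nonsingular point'' are each equivalent to $\mu \in W\lambda$, which proves both directions of the desired equivalence.

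For Conjecture \ref{conj: open embedding} I would exhibit the open embedding as the multiplication morphism of \cite{KP}. The cocharacter $\nu$ fixes the tuple $\underline{\lambda}=(\lambda_1,\ldots,\lambda_N)$, and a conjugacy class of $\gamma\colon \mathbb{C}^\times \to G_{\dv}$ amounts to a compatible splitting $\dv = \dv^{(1)}+\cdots+\dv^{(N)}$ of the gauge dimension vector. Since here the flavor torus $\mathsf{A}$ arises from the framing (cf. the Remark following Corollary \ref{cor: iso of tangent spaces}), the pair $(Z_{G_{\dv}}(\gamma), {\bf{N}}^{(\gamma,\nu)})$ is the product of quiver gauge theories for the \emph{same} quiver $Q$ with data $(\dv^{(j)},\dw^{(j)})$. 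Applying \eqref{eq: coulomb vs slice} factorwise therefore gives
\[
\mathcal{M}_C(Z_{G_{\dv}}(\gamma), {\bf{N}}^{(\gamma,\nu)}) \simeq \prod_{j=1}^{N} \overline{\mathcal{W}}^{\lambda_j}_{\mu_j}, \qquad \mu_j := \lambda_j - \sum_{i} \dv^{(j)}_i \alpha_i^\vee,
\]
which is precisely the source of the multiplication morphism $\tilde{{\bf{m}}}^{\underline{\lambda}}_{\underline{\mu}}$. Combining the isomorphism \eqref{eq: convolution is BFN res} with the fact, proven in \cite[Proposition 5.7]{KP}, that $\tilde{{\bf{m}}}^{\underline{\lambda}}_{\underline{\mu}}$ is an open embedding into $\widetilde{\mathcal{W}}^{\underline{\lambda}}_{\mu}$ then yields the map $j$. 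For the equivariance claim I would invoke that $\tilde{{\bf{m}}}^{\underline{\lambda}}_{\underline{\mu}}$ is $\mathsf{A}_Q$-equivariant, and that under the identification $\pi_1(G_{\dv})^{\wedge}\cong \mathsf{A}_Q$ recalled in Section \ref{sec: torus action on Coulomb} this is exactly the required $\pi_1(G)^{\wedge}$-equivariance; if one also wants compatibility with the loop-rotation action, this follows after the twist of the $\mathbb{C}^\times_\hbar$-action described in \cite[Section 5.4]{KP}.

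The only substantive point — and the step I expect to be the main obstacle — is the identification $\mathcal{M}_C(Z_{G_{\dv}}(\gamma), {\bf{N}}^{(\gamma,\nu)}) \simeq \prod_j \overline{\mathcal{W}}^{\lambda_j}_{\mu_j}$, namely matching the centralizer datum $\gamma$ (a gauge cocharacter) with the coweight tuple $\underline{\mu}$ indexing the factors, and checking that the product decomposition of the Coulomb branch is compatible with the domain of $\tilde{{\bf{m}}}^{\underline{\lambda}}_{\underline{\mu}}$. In carrying this out one must be careful that only those $\gamma$ for which each $\mu_j$ is a weight of $V(\lambda_j)$ contribute a nonempty factor; for the remaining $\gamma$ both sides are empty and the statement is vacuous. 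This bookkeeping is exactly the indexing of $\theta$-fixed points of $\widetilde{\mathcal{W}}^{\underline{\lambda}}_\mu$ by such tuples $\underline{\mu}$ established in \cite[Proposition 5.9]{KP}, so no genuinely new input beyond the cited results is needed.
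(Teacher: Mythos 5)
Your proposal is correct and follows essentially the same route as the paper: Conjecture \ref{conj: nonsingular Coulomb point Higgs} via Lemmas \ref{lem: smooth fixed on slices} and \ref{lem: nakajima quiver point ADE}, and Conjecture \ref{conj: open embedding} via the identification \eqref{eq: convolution is BFN res} together with the fact that the multiplication morphism of \cite[Proposition 5.7]{KP} is an $\mathsf{A}_Q$-equivariant open embedding. Your explicit attention to matching the centralizer datum $(Z_{G_{\dv}}(\gamma),{\bf{N}}^{(\gamma,\nu)})$ with the product of slices $\prod_j \overline{\mathcal{W}}^{\lambda_j}_{\mu_j}$ is a point the paper leaves implicit, but it does not alter the argument.
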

\begin{proof}
 Conjecture \ref{conj: nonsingular Coulomb point Higgs} follows from Lemmas \ref{lem: smooth fixed on slices}, and \ref{lem: nakajima quiver point ADE}.
To prove Conjecture \ref{conj: open embedding}, recall that \eqref{eq: convolution is BFN res} identifies the partial resolution $\widetilde{\mathcal{M}}_{C,\nu}$ with the convolution diagram $\widetilde{\mathcal{W}}^{\underline{\lambda}}_\mu$. Now, Conjecture \ref{conj: open embedding} follows from \cite[Proposition 5.7]{KP} which states that the multiplication morphism $\tilde{\bf{m}}^{\underline{\lambda}}_{\underline{\mu}}$ is a $\mathsf{A}_Q$-equivariant open embedding.
\end{proof}

Let us finally deal with Conjecture \ref{conj: dual tangent space}.

\begin{proposition}\label{prop: tangent for ade}
Conjecture \ref{conj: dual tangent space} holds for finite type Dynkin quivers.    
\end{proposition}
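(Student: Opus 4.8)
The plan is to compute the $\mathsf{T}_Q=\mathsf{A}_Q\times\mathbb{C}^\times_\hbar$-character of the tangent space $T_{z^\mu}\overline{\mathcal{W}}^\lambda_\mu$ directly from the loop-group description, after reducing to fundamental coweights via factorization. By Lemma \ref{lem: nakajima quiver point ADE} the hypothesis that $\widetilde{\mathcal{M}}_H$ is a point is equivalent to $\mu\in W\lambda$, and by Lemma \ref{lem: smooth fixed on slices} this is precisely when $z^\mu\in\mathcal{W}^\lambda_\mu$ is nonsingular; so $T_{z^\mu}\overline{\mathcal{W}}^\lambda_\mu$ is an honest symplectic $\mathsf{T}_Q$-module of dimension $2\,\mathrm{ht}(\lambda-\mu)$, and under the mirror normalization $\hbar^!=q/\hbar$ the loop-rotation weight will play the role of $\hbar^!$. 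Writing $\mu=w\lambda$, I would first record the total dimension from Lemma \ref{lem: conj 1.4 at the level of dimensions}, so that matching the character amounts to matching weights with the correct multiplicities, with no room for an overlooked direction.

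The first reduction is to a single fundamental coweight. Decompose $\lambda=\lambda_1+\dots+\lambda_N$ into fundamental coweights $\lambda_j=\omega^\vee_{i_j}$ and set $\mu_j=w\lambda_j$, so $\underline{\mu}=(\mu_1,\dots,\mu_N)$ has $\sum_j\mu_j=\mu$ with each $\mu_j\in W\lambda_j$. Because $\mu$ is extremal it is a vertex of the weight polytope of $V(\lambda)$, so the only way to write it as a sum of weights of the $V(\lambda_j)$ is the coherent one $\mu_j=w\lambda_j$; hence, by \cite[Proposition 5.9]{KP}, $z^{\underline{\mu}}$ is the \emph{unique} $\theta$-fixed point of the fiber over $z^\mu$ of the convolution morphism $\widetilde{\mathcal{W}}^{\underline{\lambda}}_\mu\to\overline{\mathcal{W}}^\lambda_\mu$. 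That fiber is proper and carries a $\mathbb{C}^\times$-action through $\theta$ with a single fixed point, so it is a point; since the morphism is proper and birational and $\overline{\mathcal{W}}^\lambda_\mu$ is normal, it is an isomorphism over a neighborhood of $z^\mu$. Combining this with the open embedding $\tilde{{\bf{m}}}^{\underline{\lambda}}_{\underline{\mu}}$ of \cite[Proposition 5.7]{KP}, which sends $\prod_j z^{\mu_j}$ to $z^{\underline{\mu}}$ and is $\mathsf{T}_Q$-equivariant after the twist of \cite[Section 5.4]{KP}, I obtain a $\mathsf{T}_Q$-equivariant identification
\[
T_{z^\mu}\overline{\mathcal{W}}^\lambda_\mu\;\cong\;\bigoplus_{j=1}^N T_{z^{\mu_j}}\overline{\mathcal{W}}^{\lambda_j}_{\mu_j}.
\]
As $\Phi^{-,\mathrm{re}}_\mu$ and the pairings $\langle\alpha,\mu\rangle=\sum_j\langle\alpha,\mu_j\rangle$ are additive in this decomposition, it suffices to prove Conjecture \ref{conj: dual tangent space} when $\lambda$ is a single fundamental coweight.

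For fundamental $\lambda$ I would compute $T_{z^\mu}$ from the matrix description \eqref{eq: matrix description of slices}: near $z^\mu$ the slice lies inside the big cell $U[[z^{-1}]]_1\,\mathsf{A}_Q[[z^{-1}]]_1\,z^\mu\,U_-[[z^{-1}]]_1$, whose tangent space at $z^\mu$ is $z^{-1}\mathfrak{g}[[z^{-1}]]=\bigoplus_{\beta}\bigoplus_{k\geq 1}\mathfrak{g}_\beta z^{-k}$, and $T_{z^\mu}\overline{\mathcal{W}}^\lambda_\mu$ is cut out by linearizing the dominance condition of lying in $\overline{G[z]z^\lambda G[z]}$. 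Conjugating a root direction $e_\beta z^{-k}$ past $z^\mu$ shifts its loop degree by $\langle\beta,\mu\rangle$, and extremality $\mu\in W\lambda$ forces the surviving directions to be exactly: for each $\alpha\in\Phi^{-,\mathrm{re}}_\mu$, the $\langle\alpha,\mu\rangle$ directions $e_\alpha z^{-1},\dots,e_\alpha z^{-\langle\alpha,\mu\rangle}$ from $U_-[[z^{-1}]]_1$ (of $\mathsf{A}_Q$-weight $e^\alpha$) and the $\langle\alpha,\mu\rangle$ directions of root $-\alpha$ from $U[[z^{-1}]]_1$ (of weight $e^{-\alpha}$), while all Cartan-valued directions $\mathfrak{t}\,z^{-k}$ are killed — this last point is the source of the appearance of only \emph{real} roots. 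Reading off the loop-rotation weight, namely $(\hbar^!)^{1-i}$ on the $i$-th $U_-$-direction and $(\hbar^!)^{i}$ on the $i$-th $U$-direction, reproduces the summand $(\hbar^!)^{1-i}e^\alpha+(\hbar^!)^i e^{-\alpha}$ of the conjecture.

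The main obstacle is the linearization in this last step: making precise which loop degrees survive the condition of lying in $\overline{G[z]z^\lambda G[z]}$ and, crucially, explaining why the positive- and negative-root directions are constrained \emph{asymmetrically}, so as to produce the two different powers of $\hbar^!$ (this asymmetry is exactly what the symplectic pairing $(\hbar^!)^{1-i}e^\alpha\otimes(\hbar^!)^i e^{-\alpha}\mapsto\hbar^!$ predicts). This is precisely the computation carried out in \cite{KP}, which I would import, using extremality of $\mu$ to guarantee that the relevant dominance inequalities are saturated on the nose, so that the surviving ranges are the clean intervals above and the Cartan part drops out entirely. The identification of the loop-rotation character with powers of $\hbar^!=q/\hbar$ is then routine, and the dimension count of Lemma \ref{lem: conj 1.4 at the level of dimensions} certifies that no tangent direction has been missed.
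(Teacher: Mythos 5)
Your proposal has a genuine gap at its core. After reducing to a single fundamental coweight, you defer the essential computation --- linearizing the condition of lying in $\overline{G[z]z^\lambda G[z]}$ inside the big cell, showing that all Cartan-valued directions $\mathfrak{t}\,z^{-k}$ die, and explaining the asymmetric cutting of the $\pm\alpha$ directions --- to \cite{KP}. But, as the remark following Proposition \ref{prop: tangent for ade} records, \cite{KP} carries this out only for $\lambda$ minuscule (and for $\mu$ almost dominant). In type $D_n$ most fundamental coweights are not minuscule, and in type $E_8$ none are, so for those quivers the step you propose to import is exactly the new content that must be proved, and your write-up contains no argument for it; the closing appeal to the dimension count of Lemma \ref{lem: conj 1.4 at the level of dimensions} cannot substitute, since it only certifies that nothing is missed \emph{after} one has shown that every conjectured weight vector genuinely lies in the tangent space. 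The paper's proof sidesteps the linearization entirely and needs no reduction to fundamental coweights: it passes to the repellent $R^\lambda_\mu$ of $z^\mu$ under $2\rho^\vee$, uses the symplectic form to get $T_{z^\mu}\overline{\mathcal{W}}^\lambda_\mu \simeq T_{z^\mu}R^\lambda_\mu \oplus \hbar\,(T_{z^\mu}R^\lambda_\mu)^\vee$ (this is what produces the two powers of $\hbar$ for free), identifies $R^\lambda_\mu$ with the repellent to $z^\mu$ inside the orbit $\operatorname{Gr}^\lambda$ via \cite[Theorem 3.1(1)]{Kr}, and computes $T_{z^\mu}\operatorname{Gr}^\lambda$ as the tangent space to the homogeneous space $G_{\mathcal{O}}/(G_{\mathcal{O}}\cap z^\mu G_{\mathcal{O}}z^{-\mu})$, whence only root spaces of $\mathfrak{g}$ --- i.e.\ real roots --- appear.

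Your reduction step also needs repair even where the fundamental case is available. The multiplication morphism is $\mathbb{C}^\times_\hbar$-equivariant only after the twist of \cite[Section 5.4]{KP}, so the identification $T_{z^\mu}\overline{\mathcal{W}}^\lambda_\mu\cong\bigoplus_j T_{z^{\mu_j}}\overline{\mathcal{W}}^{\lambda_j}_{\mu_j}$ is not an isomorphism of $\mathsf{T}_Q$-modules as written: the $\hbar$-weights of the $k$-th factor are shifted by $\langle\alpha,\mu_1+\cdots+\mu_{k-1}\rangle$, as in the displayed formula for $T_{z^{\underline{\mu}}}\widetilde{\mathcal{W}}^{\underline{\lambda}}_\mu$ at the end of Section \ref{subsec: proof for ADE}. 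The shifted exponents do reassemble into the intervals $1\le i\le\langle\alpha,\mu\rangle$, but only because $\langle\alpha,\mu_j\rangle=\langle w^{-1}\alpha,\lambda_j\rangle\ge 0$ for every $j$ when $\alpha\in\Phi^{-,\mathrm{re}}_\mu$; ``additivity of the pairings'' alone does not give the $\hbar$-grading of Conjecture \ref{conj: dual tangent space}. Finally, ``proper with a unique torus-fixed point, hence a point'' is not a valid implication for possibly singular proper schemes (a nodal rational curve with the rotation action is a counterexample), so the local isomorphism of the convolution diagram with $\overline{\mathcal{W}}^\lambda_\mu$ near $z^\mu$ needs a different justification, e.g.\ via geometric Satake.
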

\begin{proof}
The Coulomb branches we must consider are $\mathcal{M}_{C}=\overline{\mathcal{W}}^{\lambda}_\mu$ for $\mu \in W \lambda$. Recall the point $z^\mu$ lies in  $\mathcal{W}^\lambda_\mu \subset \overline{\mathcal{W}}^{\lambda}_\mu$. Consider the repellent $R^\lambda_\mu$ (see \cite[Definition 1.8.3]{DG}) with respect to the cocharacter $2\rho^\vee\colon \mathbb{C}^\times \rightarrow \mathsf{A}_Q$, given by the sum of the positive coroots, inside the smooth locus $\mathcal{W}^\lambda_\mu$. At the level of $\mathbb{C}$-points we have:
\begin{equation*}
R^{\lambda}_\mu = \{x \in \mathcal{W}^\lambda_\mu\,|\, \underset{t \rightarrow 0}{\operatorname{lim}}\,2\rho^\vee(t) \cdot x = z^\mu\}.
\end{equation*}
Note that $R^\lambda_\mu$ is nonsingular (see \cite[Proposition 1.7.6]{DG}, or, alternatively, use Luna's slice theorem) and 
$T_{z^\mu}R^\lambda_\mu=(T_{z^\mu}\overline{\mathcal{W}}^\lambda_\mu)^{-}$, where $-$ corresponds to taking the direct sum in $T_{z^\mu}\mathcal{W}^\lambda_\mu$ of negative $2\rho^\vee(\mathbb{C}^\times)$-weight subspaces.  Using the symplectic form on $\mathcal{W}^\lambda_\mu$ we obtain the  isomorphism of representations of ${\mathsf{T}}_Q$:
\begin{equation}\label{eq: tangent via repellent}
T_{z^\mu}\overline{\mathcal{W}}^\lambda_\mu \simeq (T_{z^\mu}R^\lambda_\mu) \oplus \hbar (T_{z^\mu}R^\lambda_\mu)^\vee
\end{equation}
It remains to describe the ${\mathsf{T}}_Q$-character of $T_{z^\mu}R^\lambda_\mu$. It follows from \cite[Theorem 3.1(1)]{Kr} that $R^\lambda_\mu$ is isomorphic to the repellent to $z^\mu \in \operatorname{Gr}^{\lambda}$. 
So, $T_{z^\mu}R^\lambda_\mu$ is isomorphic to $(T_{z^\mu}\operatorname{Gr}^\lambda)^{-}$. We have 
\begin{equation}\label{eq: tang to gr lambda}
T_{z^\mu}\operatorname{Gr}^\lambda = T_{z^\mu}(G_{\mathcal{O}} \cdot z^\mu) = \\
=T_{1}(G_{\mathcal{O}}/(G_{\mathcal{O}} \cap z^{\mu}G_{\mathcal{O}}z^{-\mu})) \simeq \bigoplus_{\alpha \in \Phi,\, k = 0,1,\ldots,\langle\alpha,\mu\rangle-1}z^k\mathfrak{g}_{\alpha}.
\end{equation}
The ${\mathsf{A}}_Q$-weight of $z^k\mathfrak{g}_{\alpha}$ is $\alpha$ and the $\mathbb{C}^\times$-weight of $z^k\mathfrak{g}_{\alpha}$ is $-k$. 
Note now that passing from $T_{z^\mu}\operatorname{Gr}^\lambda$ to $(T_{z^\mu}\operatorname{Gr}^\lambda)^{-}$ corresponds to restricting to $\alpha \in \Phi^{-}_\mu$ in the sum (\ref{eq: tang to gr lambda}). We conclude that
\begin{equation*}
T_{z^\mu}R^\lambda_\mu = (T_{z^\mu}\operatorname{Gr}^\lambda)^{-} = \sum_{\alpha \in \Phi^-_\mu}\sum_{i=1}^{\langle \alpha,\mu\rangle}\hbar^{1-i}e^{\alpha}
\end{equation*}
and the claim follows from (\ref{eq: tangent via repellent}). 
\end{proof}


\begin{remark}
Note that for $\lambda$ being minuscule and $\mu \in W\lambda$ Proposition \ref{prop: tangent for ade} already follows from \cite[Proposition 4.19]{KP}, so the main new point of Proposition \ref{prop: tangent for ade} is that the same formula holds for arbitrary dominant $\lambda$ and $\mu \in W\lambda$. 
\end{remark}

Combining Propositions \ref{prop: two first conj hold} and \ref{prop: tangent for ade} we get the formula for the tangent space to an arbitrary nonsingular $\mathsf{T}_Q$-fixed point $z^{\underline{\mu}}$ of a partial resolution $\widetilde{\mathcal{W}}^{\underline{\lambda}}_\mu$ (recall that these points are in bijection with all possible decompositions $\mu=\mu_1+\ldots+\mu_N$ such that $\mu_i \in W\lambda_i$). This is a generalization of \cite[Equation (5.8)]{KP}: 

\begin{equation}\label{eq: tangent resolved slice}
T_{z^{\underline{\mu}}} \widetilde{\mathcal{W}}^{\underline{\lambda}}_\mu = \sum_{k=1}^{N} \sum_{\alpha \in \Phi^-_{\mu_k}}\sum_{i=1}^{\langle \alpha,\mu_k\rangle} \left(\hbar^{1-i-\langle\alpha,\mu_1+\ldots+\mu_{k-1}\rangle} e^{\alpha} + \hbar^{i+\langle\alpha,\mu_1+\ldots+\mu_{k-1}\rangle}e^{-\alpha}\right).
\end{equation}

Note that for $\mu$ being {\emph{dominant}} and without $\mathbb{C}^\times_\hbar$-equivariance, the equality (\ref{eq: tangent resolved slice}) follows from \cite[Lemma 4.4]{KamnitzerTingleyWebsterWeeksYacobi}. We are grateful to Joel Kamnitzer for pointing this out to us.

\subsection{About the restriction to real roots}

Let us finish this section with a few remarks. When $Q$ is of finite type, it is known that for $\mu$ being dominant $\overline{\mathcal{W}}^{\lambda}_{\mu}$ is indeed a slice to the $G_{\mathcal{O}}$-orbit $\operatorname{Gr}^{\mu}$ of $z^\mu$ inside the closure of the $G_{\mathcal{O}}$-orbit of $z^\lambda$. This is not the case for arbitrary $\mu$ (that is the reason why $\overline{\mathcal{W}}^{\lambda}_{\mu}$ are called {\emph{generalized}} slices), although by \cite[Theorem 3.1(1)]{Kr} it is always true that the repellent to $z^\mu \in \overline{\mathcal{W}}^{\lambda}_\mu$ under the $\mathbb{C}^\times$-action coming from $2\rho^\vee$ coincides with the repellent to $z^{\mu} \in \overline{\operatorname{Gr}}^{\lambda}$.


Now assume that we are in the setting of Conjecture \ref{conj: dual tangent space} (i.e., variety $\widetilde{\mathcal{M}}_H$ is a point), and we now allow $Q$ to be an arbitrary quiver without edge loops. This is equivalent to $\mu \in W\lambda$ (see Remark \ref{rem: quiver point mu extremal} above). 
 Assuming that the $2\rho^\vee$-fixed point $p \in \mathcal{M}_C$ is nonsingular and denoting by $R^\lambda_\mu \subset \mathcal{M}_C$ the $2\rho^\vee$-repellent to $p$ we see that Conjecture \ref{conj: dual tangent space} is equivalent to the isomorphism  of ${\mathsf{T}}_{Q}$-representations:
\begin{equation}\label{eq: tangent rep real}
T_{p}R^{\lambda}_\mu = \bigoplus_{\alpha \in \Phi^{-,\mathrm{re}}}\mathfrak{g}_\alpha[[z]]/\Big(\bigoplus_{\alpha \in \Phi^{-,\mathrm{re}}}\mathfrak{g}_\alpha[[z]] \cap z^\mu \bigoplus_{\alpha \in \Phi^{-,\mathrm{re}}}\mathfrak{g}_\alpha[[z]] z^{-\mu}\Big),
\end{equation}
where $\Phi^{-,\text{re}}$ are all {\emph{real}} negative roots for $\mathfrak{g}=\mathfrak{g}_Q$. We do not know if the right hand side of \eqref{eq: tangent rep real} has a ``geometric'' meaning similar to the one in finite dimensional situation (when it is the tangent space to the repellent to $z^\mu \in \operatorname{Gr}_{G}^\lambda$).

\begin{remark}
Recently, the paper \cite{BV} introduced  schemes $\operatorname{Gr}^\lambda \cap T_\mu$ for arbitrary symmetrizable Kac-Moody group (see \cite[Section 4.3]{BV}).
It's natural to expect that $R^\lambda_\mu$ is  isomorphic to $\operatorname{Gr}^\lambda \cap T_\mu$ (for $Q$ of finite type this is known by \cite[Theorem 3.1(1)]{Kr}). One approach would be to extend the isomorphism $\mathcal{M}_C^{\mathrm{reg}} \simeq \mathcal{W}^\lambda_\mu$ of \cite[Theorem 3.10]{BFN_slices} to this more general  setting, where $\mathcal{W}^\lambda_\mu$ is constructed using \cite[Section 4]{BV}. Once this identification is established, the same argument as in \cite[Section 4.10]{Kr} should yield the isomorphism $R^\lambda_\mu \simeq \operatorname{Gr}^\lambda \cap T_\mu$. Moreover, the isomorphism $\mathcal{M}_C^{\mathrm{reg}} \simeq \mathcal{W}^\lambda_\mu$ would imply Conjecture \ref{conj: dual tangent space} in this greater generality (generalizing the finite type argument above) and hence verify the hypotheses used in Proposition \ref{prop: char irreducible}. At present these ideas are completely speculative.
\end{remark}

The following lemma sheds some light on the importance of real roots, confirming that the dimensions of left and right hand sides of \eqref{eq: tangent rep real} are indeed equal. 
Recall that $\rho$ is the sum of fundamental weights for $\mathfrak{g}=\mathfrak{g}_Q$.
\begin{lemma}\label{lem: conj 1.4 at the level of dimensions}
Let $\lambda$ be a dominant coweight of $\mathfrak{g}$ and let $\mu \in W \lambda$. Then
\[
\sum_{\alpha \in \Phi^{-,\mathrm{re}}_{\mu}}\langle\alpha,\mu\rangle=\langle \lambda-\mu,\rho  \rangle.
\]
\end{lemma}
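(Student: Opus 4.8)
The plan is to reduce both sides of the identity to the same sum over an inversion set of the Weyl group. Since $\mu \in W\lambda$, I would first fix an element $w \in W$ with $\mu = w\lambda$; nothing depends on this choice, as both sides depend only on $\mu$. Using $W$-invariance of the pairing, the right-hand side becomes
\[
\langle \lambda - \mu, \rho\rangle = \langle \lambda, \rho\rangle - \langle w\lambda, \rho\rangle = \langle \lambda, \rho - w^{-1}\rho\rangle .
\]
I would then invoke the standard Kac--Moody identity $\rho - w^{-1}\rho = \sum_{\beta \in N(w)} \beta$, where $N(w) = \{\beta \in \Phi^{+} : w\beta \in \Phi^{-}\}$ is the inversion set of $w$. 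A key point, which I would record explicitly, is that $N(w)$ consists only of \emph{real} roots: reading off $N(w)$ from a reduced word $w = s_{i_1}\cdots s_{i_\ell}$ exhibits each of its elements as a $W$-translate $s_{i_1}\cdots s_{i_{k-1}}(\alpha_{i_k})$ of a simple root, hence real. This gives $\langle \lambda - \mu, \rho\rangle = \sum_{\beta \in N(w)} \langle \beta, \lambda\rangle$.

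Next I would transform the left-hand side into a sum of the same shape. Writing a negative real root as $\alpha = -\gamma$ with $\gamma \in \Phi^{+,\mathrm{re}}$, the condition $\langle \alpha,\mu\rangle > 0$ becomes $\langle \gamma,\mu\rangle < 0$, so
\[
\sum_{\alpha \in \Phi^{-,\mathrm{re}}_\mu} \langle\alpha,\mu\rangle = \sum_{\substack{\gamma \in \Phi^{+,\mathrm{re}} \\ \langle\gamma,\mu\rangle < 0}} \bigl(-\langle\gamma,\mu\rangle\bigr).
\]
Substituting $\gamma = -w\beta$ (equivalently $\beta = -w^{-1}\gamma$) and using $\langle\gamma,\mu\rangle = \langle w^{-1}\gamma, \lambda\rangle$, the constraints translate cleanly: $\langle\gamma,\mu\rangle<0$ means $\langle\beta,\lambda\rangle>0$, which since $\lambda$ is dominant forces $\beta$ to be a positive (real) root; and the positivity of $\gamma$ becomes $w\beta \in \Phi^{-}$, i.e. $\beta \in N(w)$. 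The summand becomes $\langle\beta,\lambda\rangle$, so the left-hand side equals $\sum_{\beta \in N(w),\, \langle\beta,\lambda\rangle > 0} \langle\beta,\lambda\rangle$.

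Finally I would match the two expressions. Since $\lambda$ is dominant and every $\beta \in N(w)$ is a positive root, $\langle\beta,\lambda\rangle \geq 0$, so the terms with $\langle\beta,\lambda\rangle = 0$ contribute nothing to the unrestricted sum $\sum_{\beta \in N(w)}\langle\beta,\lambda\rangle$ obtained from the right-hand side; this is exactly the discrepancy with the restricted sum obtained from the left-hand side, and the two therefore coincide. The step I would be most careful about is the bookkeeping under the substitution $\gamma \leftrightarrow \beta$, namely checking that the three conditions (membership in $N(w)$, reality, and strict positivity of $\langle\beta,\lambda\rangle$) are matched correctly; everything else reduces to $W$-invariance of the pairing and the classical inversion-set formula for $\rho - w^{-1}\rho$, both of which hold verbatim in the Kac--Moody generality needed here.
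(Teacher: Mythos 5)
Your proof is correct and follows essentially the same route as the paper's: both reduce the identity to $W$-invariance of the pairing together with the formula expressing $\rho-w^{-1}\rho$ as the sum over the (real) inversion set. The only difference is that the paper chooses $w$ of minimal length so that the inversion set matches $\Phi^{-,\mathrm{re}}_{\mu}$ exactly, whereas you take an arbitrary $w$ with $w\lambda=\mu$ and observe that the surplus roots $\beta\in N(w)$ with $\langle\beta,\lambda\rangle=0$ contribute nothing — a small but clean simplification that avoids invoking the minimal-coset-representative property.
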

\begin{proof}
Recall that $\mu \in W\lambda$ and pick $w \in W$ of minimal length such that $w(\lambda)=\mu$. It is a standard fact that  $w$
has the following property: if $\beta$ is any positive root such that $\langle\beta,\lambda\rangle=0$, then $w\beta$ is also positive.

Now, we can write: 
\begin{equation*}
\langle \rho,\lambda-\mu\rangle=\langle \rho,w^{-1}\mu-\mu\rangle=\langle w(\rho),\mu\rangle-\langle\rho,\mu\rangle=\langle w(\rho)-\rho,\mu\rangle.
\end{equation*}

Note that $w(\rho)-\rho$ is equal to the sum of all {\emph{real}} negative roots $\alpha$ such that $w^{-1}(\alpha)$ is positive. Also note that $\langle\alpha,\mu\rangle=\langle w^{-1} \alpha,\lambda\rangle$. So, it remains to check that for a negative {\emph{real}} root $\alpha$, $w^{-1}(\alpha)$ is positive iff $\langle w^{-1}\alpha,\lambda\rangle>0$.

Assume first that $\langle w^{-1}\alpha,\lambda\rangle>0$. The coweight $\lambda$ is dominant, so $w^{-1}(\alpha)$ must be positive.

Now assume $\beta:=w^{-1}(\alpha)$ is positive. Since $\lambda$ is dominant, $\langle\beta,\lambda\rangle \geqslant  0$. If $\langle\beta,\lambda\rangle=0$, then $\beta$ is a positive root such that $w(\beta)=\alpha$ is negative. This contradicts the property of $w$.
\end{proof}

\begin{remark}
From the proof of Lemma \ref{lem: conj 1.4 at the level of dimensions}, we see that the key spot that accounts for the restriction to the real roots is the equality: 
\begin{equation*}
w(\rho)-\rho = \sum_{\substack{\alpha \in \Phi^{-,\mathrm{re}} \\ w^{-1}(\alpha) \in \Phi^+}}\alpha.
\end{equation*}
\end{remark}

\section{Slant sums of Coulomb branches}\label{subsec:slant via Coulomb}


In this section, we begin the study of the slant sum construction from the Coulomb branch perspective. We focus on the case corresponding to Proctor’s original construction, namely $\dw_{\star_2}^{(2)}=1$ and $\dw_i^{(2)}=0$ for all other $i$. It would be very interesting (and much more useful for applications) to study the case of general $\mathsf{w}$ (see Section \ref{sec:general framing} above for some informal discussion).


\subsection{Integrable systems for Coulomb branches}
Coulomb branches come equipped with a certain additional structure called an {\emph{integrable system}}. Recall that 
\begin{equation*}
\mathcal{M}_C = \operatorname{Spec}H_*^{G_{\mathcal{O}}}(\mathcal{R}),~\mathcal{A}_\hbar=H_*^{G_\mathcal{O} \rtimes \mathbb{C}^\times}(\mathcal{R}).
\end{equation*}
We have a natural (left) action of the algebra $H^*_{G_{\mathcal{O}}}(\operatorname{pt})=H^*_{G}(\operatorname{pt})$ on $\mathbb{C}[\mathcal{M}_C]=H_*^{G_{\mathcal{O}}}(\mathcal{R})$ as well as the action of $H^*_{G_{\mathcal{O}} \rtimes \mathbb{C}^\times}(\operatorname{pt})=H^*_{G \times \mathbb{C}^\times}(\operatorname{pt})$ on $\mathcal{A}_\hbar$. Acting on identity, we obtain homomorphisms:
\begin{equation*}
H^*_{G}(\operatorname{pt}) \rightarrow \mathbb{C}[\mathcal{M}_C],~H^*_{G \times \mathbb{C}^\times}(\operatorname{pt}) \rightarrow \mathcal{A}_\hbar
\end{equation*}
defining integrable systems on $\mathcal{M}_C$ and $\mathcal{A}_\hbar$. In this way, we obtain (Poisson) commutative subalgebras of $\mathbb{C}[\mathcal{M}_C]$ and $\mathcal{A}$. The subalgebra $H:=H^*_{G \times \mathbb{C}^\times}(\operatorname{pt})|_{\hbar=1} \subset \mathcal{A}$ is called the {\emph{Cartan subalgebra}} in \cite[]{BFNII}. It is also sometimes called {\emph{Gelfand--Tsetlin subalgebra}} (see, for example, \cite{webster_GZ}). We can naturally identify it with the polynomial ring $\mathbb{C}[c_{i,k}\,|\, i \in Q_0,\,k=1,\ldots,v_i]$, where $c_{i,k}= c_{i}^{G \times \mathbb{C}^\times}(V_k)$ ($V_k$ is considered as a $G \times \mathbb{C}^\times$-equivariant bundle on $\operatorname{pt}$).

One reason why the algebra $H$ is important is because it allows one to define the notion of a Gelfand--Tsetlin character of a module over $\mathcal{A}$. Namely, whenever $M$ is a finitely generated $\mathcal{A}$-module such that the action of $H$ on it decomposes $M$ into the direct sum of finite dimensional generalized eigenspaces (such modules are called Gelfand--Tsetlin-modules in \cite{webster_GZ}), we can define its Gelfand--Tsetlin character as follows. 

For any collection ${\bf{S}}=(S_i)_{i \in Q_0}$ such that $S_i$ is a set of $\dv_i$ unordered complex numbers, we can define 
\begin{equation*}
M_{\bf{S}} = \{x \in M\,|\,\forall i \in Q_0,\, k =1,\ldots,v_i~\exists N>0~\text{s.t}~(c_{i,k}-e_k(S_i))^N=0\}.
\end{equation*}
Then:
\begin{equation*}
\chi_{GT}(M) := \sum_{{\bf{S}}}z^{\bf{S}} \operatorname{dim}M_{\bf{S}},    \end{equation*}
where we treat $z^{\bf{S}}$ as formal parameters. For a finite type Dynkin quiver  $Q$, the Gelfand-Tsetlin character is one way to package  the Frenkel-Reshetikhin $q$-character of a module.

\subsection{Slant sum of Coulomb branches}

Let $Q^{(1)}$ and $Q^{(2)}$ be quivers and fix dimension and framing vectors $\dv^{(r)},\dw^{(r)}$ for $r \in \{1,2\}$.

\subsubsection{One-dimensional framing}
We now make the following assumption.

\begin{assumption}\label{eq: assumption w}
We have $\dw^{(2)}_{\sspt_{2}}=1$ for some $\sspt_{2} \in Q^{(2)}_{0}$ and $\dw^{(2)}_{i}=0$ otherwise.
\end{assumption}

\begin{example}\label{Ex:minuscule}
This assumption is automatically satisfied for $(Q^{(2)},\dv^{(2)},\dw^{(2)})$ when  $\mu^{(2)} = w\lambda^{(2)}$ with $w$  $\lambda^{(2)}$-minuscule (in the sense of Peterson, see, for example, \cite[Section 2]{stembridge}), provided the Dynkin subdiagram of $Q^{(2)}$ spanned by the vertices 
\begin{equation*}
\{i \in Q_0^{(2)}\,|\, s_i~\text{occurs in a reduced decomposition of}~w\}
\end{equation*}
is connected. In this case one recovers the classical slant-sum construction of Proctor \cite{proctor}.
\end{example}


Choose a vertex $\sspt_1 \in Q_0^{(1)}$ such that $\dv^{(1)}_{\sspt_{1}}=1$. As in Section \ref{sec: slant sum quivers}, we can form the slant sum, which is the quiver gauge theory for the quiver $Q:=Q^{(1)} {}_{\sspt_1}\slantsum_{\sspt_{2}} Q^{(2)}$ with $\dv=\dv^{(1)} {}_{\sspt_1}\slantsum_{\sspt_{2}} \dv^{(2)}$ and $\dw=\dw^{(1)}  {}_{\sspt_1}\slantsum_{\sspt_{2}} \dw^{(2)}$. Recall that $\dv=\dv^{(1)} \sqcup \dv^{(2)}$ and $\dw_i=\dw_i^{(1)}$ for $i \in Q_0^{(1)}$, $\dw_i=\dw^{(2)}_i$ for $i \in Q_0^{(2)} \setminus \{\sspt_{2}\}$, and $\dw_{\sspt_{2}}=0$.

As in \eqref{eq: def of N}, we have ${\bf{N}}^{(1)}$, ${\bf{N}}^{(2)}$, and ${\bf{N}}$ which are acted on by the gauge groups $G_{\dv^{(1)}}$, $G_{\dv^{(2)}}$, and $G_{\dv}$ respectively. These give rise to the corresponding Coulomb branches $\mathcal{M}_{C}^{(1)}$, $\mathcal{M}_{C}^{(2)}$, and  $\mathcal{M}_{C}$ as well as their quantizations $\mathcal{A}^{(1)}$, $\mathcal{A}^{(2)}$, and $\mathcal{A}$.



Let 
\begin{equation*}
\mathsf{A}_{Q^{(1)}} = \pi_1(G_{\mathsf{v}^{(1)}})^{\wedge},~\mathsf{A}_{Q^{(2)}} = \pi_1(G_{\mathsf{v}^{(2)}})^{\wedge},~\mathsf{A}_{Q} = \pi_1(G_{\mathsf{v}})^{\wedge}
\end{equation*}
be the tori acting naturally on $\mathcal{M}_C^{(1)}$, $\mathcal{M}_C^{(2)}$, and $\mathcal{M}_C$ respectively (see Section \ref{sec: torus action on Coulomb} above).  We will denote by $\mathsf{T}_{Q^{(1)}}$, $\mathsf{T}_{Q^{(2)}}$, and $\mathsf{T}_{Q}$ their products with $\mathbb{C}^\times_\hbar$.

Consider the isomorphism
\begin{equation*}
\varphi\colon G_{\mathsf{v}^{(1)}} \times G_{\mathsf{v}^{(2)}}  \iso G_{\mathsf{v}} 
\end{equation*}
defined as follows.  Recall that $\operatorname{GL}(V_{\sspt_1})=\mathbb{C}^\times$ and for $k \in Q^{(2)}_{0}$ let $\iota_k\colon \operatorname{GL}(V_{\sspt_{1}}) \hookrightarrow \operatorname{GL}(V^{(2)}_{k})$ be the embedding given by $t \mapsto \operatorname{diag}(t,t,\ldots,t)$. 
Then $\varphi(g^{(1)},g^{(2)})_{k}$ is equal to $g_k$ for $k \in Q_0^{(1)}$, and is equal to   $\iota_k(g_{\sspt_{1}})g^{(2)}_{k}$ for $k \in Q_0^{(2)}$. 

Applying $\pi_1(\bullet)^{\wedge}$ to both sides of the isomorphism $\varphi$, we obtain the identification:
\begin{equation*}
\mathsf{A}_{Q} \iso \mathsf{A}_{Q^{(1)}} \times \mathsf{A}_{Q^{(2)}},
\end{equation*}
inducing the identification $\mathsf{T}_{Q} \iso \mathsf{A}_{Q^{(1)}} \times \mathsf{A}_{Q^{(2)}} \times \mathbb{C}^\times_{\hbar}$ and the embedding $\mathsf{T}_{Q} \hookrightarrow 
\mathsf{T}_{Q^{(1)}} \times \mathsf{T}_{Q^{(1)}}$. This embedding defines an action of $\mathsf{T}_{Q}$ on $\mathcal{M}_{C}^{(1)} \times \mathcal{M}_C^{(2)}$ and quantizes to an action on $\mathcal{A}_{C}^{(1)} \otimes \mathcal{A}_{C}^{(2)}$.

The isomorphism $\varphi$ also induces identifications:
\begin{equation*}
H^*_{G_{\mathsf{v}^{(1)}}}(\operatorname{pt}) \otimes H^*_{G_{\mathsf{v}^{(2)}}}(\operatorname{pt}) \simeq H^*_{G_{\mathsf{v}}}(\operatorname{pt})
\end{equation*}
\begin{equation}\label{eq:integrable embedding}
H=H^*_{G_{\mathsf{v}} \times \mathbb{C}^\times}(\operatorname{pt})|_{\hbar=1} \iso H^*_{G_{\mathsf{v}^{(1)}} \times \mathbb{C}^\times}(\operatorname{pt})|_{\hbar=1} \otimes H^*_{G_{\mathsf{v}^{(2)}} \times \mathbb{C}^\times}(\operatorname{pt})|_{\hbar=1} = H^{(1)} \otimes H^{(2)}.
\end{equation}

\begin{proposition}\label{prop_slant_sum_of_coulomb}
There exist $\mathsf{T}_{Q}$-equivariant isomorphisms of Poisson varieties and algebras:
\begin{equation*}
\mathcal{M}_C \simeq \mathcal{M}_C^{(1)} \times \mathcal{M}_C^{(2)},~\mathcal{A} \simeq \mathcal{A}^{(1)} \otimes \mathcal{A}^{(2)}.  
\end{equation*}
This isomorphism is compatible with the identification (\ref{eq:integrable embedding}) of integrable systems.

\end{proposition}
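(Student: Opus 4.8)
The plan is to reduce the statement to the general fact that the Coulomb branch of a gauge theory which is an external product of two gauge theories factorizes as a product of the two Coulomb branches; the only substantive content is that, after the twist $\varphi$, the slant-summed theory $(G_{\dv},{\bf{N}})$ becomes exactly such a product. First I would write out ${\bf{N}}={\bf{N}}(\dv,\dw)$ explicitly. Since $Q_0=Q^{(1)}_0\sqcup Q^{(2)}_0$ and $Q_1=Q^{(1)}_1\sqcup Q^{(2)}_1\sqcup\{e_0\colon \sspt_1\to\sspt_2\}$, and since $\dw_{\sspt_2}=0$ while $\dw_i$ agrees with $\dw^{(r)}_i$ elsewhere, the space ${\bf{N}}$ splits as
\[
{\bf{N}} \cong {\bf{N}}^{(1)} \oplus \Big(\bigoplus_{e \in Q^{(2)}_1}\Hom(V_{t(e)},V_{h(e)}) \oplus \bigoplus_{i \in Q^{(2)}_0\setminus\{\sspt_2\}}\Hom(W_i,V_i) \oplus \Hom(V_{\sspt_1},V_{\sspt_2})\Big),
\]
where the last summand is the contribution of the new arrow $e_0$. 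By Assumption \ref{eq: assumption w} we have $\dw^{(2)}_{\sspt_2}=1$, so $W^{(2)}_{\sspt_2}\cong\C$ and the framing term $\Hom(W^{(2)}_{\sspt_2},V_{\sspt_2})$ of ${\bf{N}}^{(2)}$ is canonically $V_{\sspt_2}$; on the other hand $\dv^{(1)}_{\sspt_1}=1$, so $V_{\sspt_1}\cong\C$ and $\Hom(V_{\sspt_1},V_{\sspt_2})\cong V_{\sspt_2}$ as well. Fixing these identifications produces a linear isomorphism ${\bf{N}}\cong{\bf{N}}^{(1)}\oplus{\bf{N}}^{(2)}$.

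The heart of the argument is to check that this linear isomorphism is equivariant for $\varphi$, i.e. that pulling back the $G_{\dv}$-action on ${\bf{N}}$ along $\varphi\colon G_{\dv^{(1)}}\times G_{\dv^{(2)}}\iso G_{\dv}$ yields the block-diagonal action in which $G_{\dv^{(1)}}$ acts only on ${\bf{N}}^{(1)}$ and $G_{\dv^{(2)}}$ only on ${\bf{N}}^{(2)}$. The summand ${\bf{N}}^{(1)}$ is untouched, since $\varphi$ is the identity on the $Q^{(1)}$-coordinates. For an edge $e\in Q^{(2)}_1$ both $h(e)$ and $t(e)$ lie in $Q^{(2)}_0$, so $\varphi$ multiplies $g^{(2)}_{h(e)}$ and $g^{(2)}_{t(e)}$ by the \emph{same} scalar $\iota_{h(e)}(g^{(1)}_{\sspt_1})$, $\iota_{t(e)}(g^{(1)}_{\sspt_1})$; in $g_{h(e)}X_e g_{t(e)}^{-1}$ these cancel, leaving the ${\bf{N}}^{(2)}$-action with no dependence on $g^{(1)}$. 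For the new arrow $\phi\in\Hom(V_{\sspt_1},V_{\sspt_2})$ the $G_{\dv}$-action is $\phi\mapsto g_{\sspt_2}\,\phi\,g_{\sspt_1}^{-1}$; after $\varphi$ this becomes $\big(\iota_{\sspt_2}(g^{(1)}_{\sspt_1})\,g^{(2)}_{\sspt_2}\big)\,\phi\,(g^{(1)}_{\sspt_1})^{-1}$, and because $g^{(1)}_{\sspt_1}\in GL(V_{\sspt_1})=\Cs$ is a scalar it again cancels, leaving exactly $g^{(2)}_{\sspt_2}\,\phi$, which is the action on the framing term $\Hom(W^{(2)}_{\sspt_2},V_{\sspt_2})$ of ${\bf{N}}^{(2)}$. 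Thus the action is block-diagonal. The essential mechanism is the cancellation forced by $\dim V_{\sspt_1}=1$, and this is precisely the reason the isomorphism $\varphi$ must be twisted by the diagonal embeddings $\iota_k$.

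With ${\bf{N}}\cong{\bf{N}}^{(1)}\oplus{\bf{N}}^{(2)}$ as a $G_{\dv^{(1)}}\times G_{\dv^{(2)}}$-representation with block-diagonal action, the factorization of the Coulomb branch is immediate from the construction recalled in Section \ref{subsec: Higgs and Coulomb branches}: one has $G_{\dv,\mathcal{K}}\cong G_{\dv^{(1)},\mathcal{K}}\times G_{\dv^{(2)},\mathcal{K}}$ and ${\bf{N}}_{\mathcal{O}}\cong{\bf{N}}^{(1)}_{\mathcal{O}}\oplus{\bf{N}}^{(2)}_{\mathcal{O}}$, and the defining condition $gn\in{\bf{N}}_{\mathcal{O}}$ decouples into its two components, so that $\mathcal{R}\cong\mathcal{R}^{(1)}\times\mathcal{R}^{(2)}$ as $G_{\dv,\mathcal{O}}$-varieties via $\varphi$. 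The Künneth isomorphism for equivariant Borel--Moore homology then gives $H_*^{G_{\dv,\mathcal{O}}}(\mathcal{R})\cong H_*^{G_{\dv^{(1)},\mathcal{O}}}(\mathcal{R}^{(1)})\otimes H_*^{G_{\dv^{(2)},\mathcal{O}}}(\mathcal{R}^{(2)})$, and the convolution product is visibly the tensor product of the two convolutions; the same holds after adding loop-rotation equivariance, yielding $\mathcal{M}_C\cong\mathcal{M}_C^{(1)}\times\mathcal{M}_C^{(2)}$ and $\mathcal{A}\cong\mathcal{A}^{(1)}\otimes\mathcal{A}^{(2)}$. Finally, $\mathsf{T}_Q$-equivariance holds because the $\pi_1(G_{\dv})$-grading controlling the $\mathsf{A}_Q$-action is multiplicative under the product of affine Grassmannians and $\varphi$ induces precisely the identification $\mathsf{A}_Q\cong\mathsf{A}_{Q^{(1)}}\times\mathsf{A}_{Q^{(2)}}$, while $\mathbb{C}^\times_\hbar$ acts diagonally; and compatibility with the integrable systems follows because the homomorphism $H\to\mathcal{A}$ is given by the $H^*_{G_{\dv,\mathcal{O}}}(\mathrm{pt})$-module structure on $H_*^{G_{\dv,\mathcal{O}}}(\mathcal{R})$, which factorizes through Künneth and matches \eqref{eq:integrable embedding} under $\varphi$. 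The hard part is the bookkeeping of the twist $\varphi$ in the equivariance check above, together with verifying that all three structures---the Poisson bracket, the $\mathsf{T}_Q$-action, and the Gelfand--Tsetlin subalgebra $H$---are transported consistently; once block-diagonality is established, the remaining assertions are formal consequences of the Künneth factorization.
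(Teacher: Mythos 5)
Your proposal is correct and follows essentially the same route as the paper's proof: construct the twisted isomorphism $\varphi$ of gauge groups together with a compatible identification ${\bf{N}}\cong{\bf{N}}^{(1)}\oplus{\bf{N}}^{(2)}$ (using $\dim V_{\sspt_1}=\dim W^{(2)}_{\sspt_2}=1$ so the scalar $g^{(1)}_{\sspt_1}$ cancels), and then invoke the factorization of the Coulomb branch construction for an external product of gauge theories. You simply spell out the equivariance check and the Künneth step that the paper dispatches with ``follows from the definitions.''
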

\begin{proof} It follows from the definitions that $\mathcal{M}_C^{(1)} \times \mathcal{M}_C^{(2)}$ is the Coulomb branch for the pair $(G_{\dv^{(1)}} \times G_{\dv^{(2)}},{\bf{N}}^{(1)} \oplus {\bf{N}}^{(2)})$. So, to construct the desired isomorphism it is enough to construct isomorphisms
\begin{equation*}
\varphi\colon G_{\dv^{(1)}} \times G_{\dv^{(2)}} \iso G_{\dv},\quad 
\psi\colon {\bf{N}}^{(1)} \oplus {\bf{N}}^{(2)} \iso {\bf{N}} 
\end{equation*}
such that
\begin{equation*}
\psi((g^{(1)},g^{(2)}) \cdot (n^{(1)},n^{(2)}))=\varphi(g^{(1)},g^{(2)})\cdot \psi(n^{(1)},n^{(2)})~
\end{equation*}
for all $g^{(r)} \in G_{\dv^{(r)}}$ and $n^{(r)} \in {\bf{N}}^{(r)}$, $r \in \{1,2\}$. We have already constructed the isomorphism $\varphi$.

Choose an isomorphism between the one-dimensional spaces $V_{\sspt_{1}}^{(1)} \cong W_{\sspt_{2}}^{(2)}$. Then the identification $\psi$ is the natural identification ${\bf{N}}^{(1)} \oplus {\bf{N}}^{(2)} \iso {\bf{N}}$ (it identifies $\Hom(W_{\sspt_{2}}^{(2)},V_{\sspt_{2}}^{(2)})$ with $\Hom(V_{\sspt_{1}},V_{\sspt_{2}})$ via the identifications $W_{\sspt_{2}}^{(2)}\cong V_{\sspt_{1}}^{(1)}=V_{\sspt_{1}}$ and $V_{\sspt_{2}}^{(2)}=V_{\sspt_2}$).

It follows from the definitions that the maps $\varphi$, $\psi$ satisfy the desired properties. Compatibility with integrable systems and ${\mathsf{T}}_{Q}$-equivariance follows from construction.
\end{proof}

\begin{remark}
Fixing arbitrary pair of characters $\theta^{(1)}\colon G_{\mathsf{v}^{(1)}} \rightarrow \mathbb{C}^\times$, $\theta^{(2)}\colon G_{\mathsf{v}^{(2)}} \rightarrow \mathbb{C}^\times$ and using the identification $\varphi$ we obtain the character $\theta\colon G_{\mathsf{v}} \rightarrow \mathbb{C}^\times$. The same argument as in the proof of Proposition \ref{prop_slant_sum_of_coulomb} above shows that the identification $\mathcal{M}_C \simeq \mathcal{M}_C^{(1)} \times \mathcal{M}_C^{(2)}$ lifts to the $\mathsf{T}_Q$-equivariant identification of partially resolved  Coulomb branches   $\widetilde{\mathcal{M}}_{C,\theta} \simeq \widetilde{\mathcal{M}}_{C,\theta^{(1)}}^{(1)} \times \widetilde{\mathcal{M}}_{C,\theta^{(2)}}^{(2)}$. Similarly, if we have any pair of tori ${\mathsf{A}}^{(1)}, \mathsf{A}^{(2)}$ acting on ${\bf{N}}^{(1)}$, ${\bf{N}}^{(2)}$, then, using the identification $\psi$, we obtain the action of ${\mathsf{A}}^{(1)} \times \mathsf{A}^{(2)}$ on ${\bf{N}}$ and then the isomorphism of Proposition \ref{prop_slant_sum_of_coulomb} can be upgraded to deformations.
\end{remark}

\begin{remark}
The analog of Proposition \ref{prop_slant_sum_of_coulomb} is known to be true on the Higgs side, see \cite[Lemma 8.4]{sing_CM_varieties}. It becomes very natural after applying the Crawley-Boevey trick to our quivers.
\end{remark}

One immediate corollary of Proposition \ref{prop_slant_sum_of_coulomb} is
\begin{corollary}
If Conjecture \ref{conj: nonsingular Coulomb point Higgs} holds for $\mathcal{M}_{C}^{(1)}$, $\mathcal{M}_C^{(2)}$, then it also holds for $\mathcal{M}_C$. If $p^{(i)}$, $i=1,2$ are fixed points of $\mathcal{M}_{C}^{(i)}$, we denote by $p^{(1)}\# p^{(2)}$ the corresponding (unique) fixed point of $\mathcal{M}_C$.
\end{corollary}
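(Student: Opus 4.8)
The plan is to move the statement to the Coulomb side through Proposition \ref{prop_slant_sum_of_coulomb}, verify it there, and translate back using the two conjectures assumed for the summands. Throughout we take $\theta$, $\theta^{(1)}$, $\theta^{(2)}$ generic, so that each of the corresponding cocharacters into $\mathsf{A}_Q$, $\mathsf{A}_{Q^{(1)}}$, $\mathsf{A}_{Q^{(2)}}$ is regular; consequently $\mathcal{M}_C^{\theta}=\mathcal{M}_C^{\mathsf{A}_Q}$ and likewise for the summands, which is why the twist in the matching of stability parameters noted after Proposition \ref{prop_slant_sum_of_coulomb} plays no role in computing fixed loci. First I would record the Coulomb factorization of the fixed locus. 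By Proposition \ref{prop_slant_sum_of_coulomb} there is a $\mathsf{T}_Q$-equivariant isomorphism $\mathcal{M}_C \simeq \mathcal{M}_C^{(1)} \times \mathcal{M}_C^{(2)}$, and under $\mathsf{A}_Q \simeq \mathsf{A}_{Q^{(1)}} \times \mathsf{A}_{Q^{(2)}}$ each factor torus acts only on the corresponding factor. Since taking (schematic) fixed points commutes with products of factor-wise actions,
\[
\mathcal{M}_C^{\theta} \simeq (\mathcal{M}_C^{(1)})^{\theta^{(1)}} \times (\mathcal{M}_C^{(2)})^{\theta^{(2)}}.
\]

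Next I would observe that the property ``is a single nonsingular point'' is both preserved and reflected by such products: a product of two schemes is a single point exactly when both factors are, and, by the standard fact that the smooth locus of a product is the product of the smooth loci, a point $(p^{(1)},p^{(2)})$ is nonsingular in $\mathcal{M}_C^{(1)}\times\mathcal{M}_C^{(2)}$ if and only if $p^{(1)}$ and $p^{(2)}$ are nonsingular in their respective factors. Hence $\mathcal{M}_C^{\theta}$ is a single nonsingular point if and only if both $(\mathcal{M}_C^{(r)})^{\theta^{(r)}}$ are. Applying Conjecture \ref{conj: nonsingular Coulomb point Higgs} to $\mathcal{M}_C^{(1)}$ and $\mathcal{M}_C^{(2)}$, this is equivalent to both $\widetilde{\mathcal{M}}_H^{(1)}$ and $\widetilde{\mathcal{M}}_H^{(2)}$ being points. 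Thus the whole statement reduces to the purely Higgs-side equivalence that $\widetilde{\mathcal{M}}_H$ is a point if and only if both $\widetilde{\mathcal{M}}_H^{(1)}$ and $\widetilde{\mathcal{M}}_H^{(2)}$ are.

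For this Higgs equivalence I would use the additivity $\dim \qv = \dim \qv^{(1)} + \dim \qv^{(2)}$ of (virtual) dimensions together with connectedness and smoothness for generic stability (Proposition \ref{prop: generic theta}), under which ``being a point'' is the same as ``being nonempty of dimension $0$''. The direction where both summands are points is clean: since $\dv^{(1)}_{\sspt_1}=1$, every $\bT^{(1)}$-fixed point of $\qv^{(1)}$ is automatically split over $\sspt_1$, so Theorem \ref{thm: slant sum and fixed points} produces a $\bT$-fixed point of $\qv$ (applied with $\theta^{(2)}=\pm(1,\dots,1)$, which is harmless because nonemptiness of a quiver variety does not depend on the choice of generic $\theta$); combined with the vanishing of the virtual dimension this forces $\qv$ to be a point. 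Conversely, if $\qv$ is a point then it is nonempty of dimension $0$, and additivity forces both summand virtual dimensions to vanish \emph{provided} we know both summands are nonempty.

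The main obstacle is exactly this last nonemptiness implication: that $\qv \neq \emptyset$ forces $\qv^{(1)}\neq\emptyset$ and $\qv^{(2)}\neq\emptyset$. I would attack it by taking a $\bT$-fixed representative $(X,Y,I,J)$ of the point of $\qv$ and restricting it to the vertices of $Q^{(1)}$ and of $Q^{(2)}$, the arrow $\sspt_1\to\sspt_2$ supplying via $X_{e_0}$ and $Y_{e_0}$ the one-dimensional framing of $\qv^{(2)}$ at $\sspt_2$; the content is to check that these restrictions remain stable, which is the converse of Lemma \ref{lem: preserve stability} and which I expect to obtain from the same subrepresentation bookkeeping used there, together with the weight-space constraints of Lemmas \ref{lem: wght-spces} and \ref{fixed-pt-J-I=0}. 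Once nonemptiness is settled, chaining the three equivalences yields Conjecture \ref{conj: nonsingular Coulomb point Higgs} for $\mathcal{M}_C$, and the unique nonsingular $\theta$-fixed point $p^{(1)}\slantsum p^{(2)}$ of $\mathcal{M}_C$ is precisely the image of $(p^{(1)},p^{(2)})$ under the isomorphism of Proposition \ref{prop_slant_sum_of_coulomb}.
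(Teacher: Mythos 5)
The paper offers no argument for this corollary beyond declaring it ``immediate'' from Proposition \ref{prop_slant_sum_of_coulomb}, so your proposal is in effect supplying the proof the authors left implicit, and your route is the intended one: factor the $\theta$-fixed locus through the product decomposition of $\mathcal{M}_C$, use that ``single nonsingular point'' is detected factorwise, apply the assumed conjectures to the summands, and reduce everything to the Higgs-side statement that $\widetilde{\mathcal{M}}_H$ is a point iff both $\widetilde{\mathcal{M}}_H^{(r)}$ are. You are right that this last equivalence is the only real content beyond Proposition \ref{prop_slant_sum_of_coulomb}, and your treatment of the forward direction (splitness is automatic since $\dv^{(1)}_{\sspt_1}=1$, Theorem \ref{thm: slant sum and fixed points} supplies a point of $\qv$, and additivity of virtual dimension plus connectedness finishes) is correct.

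The one place you leave work undone is the converse nonemptiness claim, and your sketch of it omits a point that is not purely about stability: when you restrict a representative $(X,Y,I,J)$ of the point of $\qv$ to $Q^{(1)}$, the $\sspt_1$-component of the $Q^{(1)}$ moment map equals $Y_{e_0}X_{e_0}$, which is not zero by fiat. It does vanish, but for a reason specific to this setting: $\dv_{\sspt_1}=1$ makes $Y_{e_0}X_{e_0}$ a scalar equal to $\operatorname{tr}(X_{e_0}Y_{e_0})$, and taking the trace of the $\sspt_2$-component of $\mu=0$ (where there is no $I,J$ contribution since $\dw_{\sspt_2}=0$) forces $\operatorname{tr}(X_{e_0}Y_{e_0})=0$. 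With that in hand, the stability of both restrictions does follow from the subrepresentation bookkeeping you anticipate: given a subrepresentation $S^{(2)}$ of the $Q^{(2)}$-restriction containing $\operatorname{im}I^{(2)}$ (resp.\ contained in $\ker J^{(2)}$), the extension $V^{(1)}\oplus S^{(2)}$ (resp.\ $0\oplus S^{(2)}$) is a subrepresentation of the $Q$-representation of the corresponding type, and stability of the latter gives the required inequality; the $Q^{(1)}$ side is handled symmetrically using one-dimensionality of $V_{\sspt_1}$ to control $X_{e_0}$ and $Y_{e_0}$. So the step you flag as the main obstacle does go through, but as written your proposal asserts rather than proves it, and the moment-map verification should be added explicitly.
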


\begin{remark}
Proposition \ref{prop_slant_sum_of_coulomb} allows one to extract the character of $T_{p^{(1)}\#p^{(2)}}\mathcal{M}_C$ from the characters of $T_{p^{(i)}}\mathcal{M}_C^{(i)}$, $i=1,2$. 
Proposition \ref{prop_slant_sum_of_coulomb} also allows us to compute the GT-character for an irreducible module $L(p^{(1)}\# p^{(2)})$ over $\mathcal{A}$ corresponding to $p^{(1)}\# p^{(2)}$ (see Section \ref{subsec: conjectures} above) from the GT-characters of irreducible modules for $\mathcal{A}^{(i)}$ corresponding to $p^{(i)}$. 
\end{remark}

Let us now relax Assumption \ref{eq: assumption w}.

\begin{assumption}\label{eq: assumption w relaxed}
We have $\dw^{(2)}_{\sspt_{2}}=1$ for some $\sspt_{2} \in Q^{(2)}_{0}$.
\end{assumption}
As above, we fix the identification $W^{(2)}_{\star_2} \simeq V^{(1)}_{\star_1}\simeq \mathbb{C}$. Identifying $\mathbb{C}^\times \simeq GL(W^{(2)}_{\star_2})$ we obtain the action $\mathbb{C}^\times \curvearrowright {\bf{N}}^{(2)}$. Consider the following embedding:
\begin{equation}\label{eq: embedding w 1}
G_{\mathsf{v}^{(1)}} \times G_{\mathsf{v}^{(2)}} \hookrightarrow G_{\mathsf{v}} \times \mathbb{C}^\times,~(g^{(1)},g^{(2)}) \mapsto (g^{(1)},g^{(2)},g^{(1)}_{\star_1}).
\end{equation}
It follows from the definitions that this embedding intertwines the  actions on ${\bf{N}} = {\bf{N}}^{(1)} \oplus {\bf{N}}^{(2)}$. The cokernel of (\ref{eq: embedding w 1}) identifies with $\mathbb{C}^\times$, i.e., we have the following short exact sequence:
\begin{equation}\label{eq:s.e.s}
1 \rightarrow G_{\mathsf{v}^{(1)}} \times G_{\mathsf{v}^{(2)}} \rightarrow G_{\mathsf{v}} \times \mathbb{C}^\times \rightarrow \mathbb{C}^\times \rightarrow 1
\end{equation}

Applying \cite[Proposition 3.18]{BFNII} we obtain the following standard proposition describing the Coulomb branch $\mathcal{M}_C$ as a Hamiltonian reduction of the product of Coulomb branches.

\begin{proposition}\label{eq: prop coulomb via hamiltonian}
We have isomorphisms:
\begin{equation*}
\mathcal{M}_{C} \simeq (\mathcal{M}_{C}^{(1)} \times \mathcal{M}_C(G_{\dv^{(2)}} \times \mathbb{C}^\times,{\bf{N}}^{(2)}))/\!\!/\!\!/ \mathbb{C}^\times,~\mathcal{A}_C \simeq (\mathcal{A}_C^{(1)} \otimes \mathcal{A}_C(G_{\dv^{(2)}} \times \mathbb{C}^\times,{\bf{N}}^{(2)}))/\!\!/\!\!/ \mathbb{C}^\times.
\end{equation*}
\end{proposition}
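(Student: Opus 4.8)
The plan is to realize the gauge theory $(G_{\mathsf{v}},{\bf{N}})$ as the result of \emph{ungauging} a one-dimensional flavor torus from a product theory, and then to quote \cite[Proposition 3.18]{BFNII}, which expresses the effect of such ungauging on the Coulomb branch as a Hamiltonian reduction by the dual torus. The first step is to record the underlying vector-space identification. Exactly as in the proof of Proposition \ref{prop_slant_sum_of_coulomb}, the chosen isomorphism $W^{(2)}_{\star_2}\simeq V^{(1)}_{\star_1}\simeq \mathbb{C}$ identifies the framing summand of ${\bf{N}}^{(2)}$ attached to $\star_2$ with the new-arrow summand $\Hom(V_{\star_1},V_{\star_2})$ of ${\bf{N}}$; every other summand of ${\bf{N}}^{(1)}\oplus{\bf{N}}^{(2)}$ matches a summand of ${\bf{N}}$ verbatim, including the framings of $Q^{(2)}$ at vertices other than $\star_2$, which are simply carried along. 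This produces the same linear isomorphism $\psi\colon {\bf{N}}^{(1)}\oplus{\bf{N}}^{(2)}\iso {\bf{N}}$ as in the previous proposition.

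Next I would verify the equivariance asserted just after \eqref{eq: embedding w 1}. Writing $\tilde G:=G_{\mathsf{v}^{(1)}}\times G_{\mathsf{v}^{(2)}}\times \mathbb{C}^\times$, let $\tilde G$ act on ${\bf{N}}^{(1)}\oplus{\bf{N}}^{(2)}$ through the product theory, with $G_{\mathsf{v}^{(1)}}$ acting only on ${\bf{N}}^{(1)}$ and $G_{\mathsf{v}^{(2)}}\times\mathbb{C}^\times$ only on ${\bf{N}}^{(2)}$, the last $\mathbb{C}^\times=GL(W^{(2)}_{\star_2})$ being the flavor symmetry rotating the $\star_2$-framing. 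Under $\psi$, the factor $GL(V_{\star_1})=\mathbb{C}^\times$ of $G_{\mathsf{v}^{(1)}}$ and this extra $\mathbb{C}^\times$ act on the identified summand in the same way; hence restricting the $\tilde G$-action along the embedding \eqref{eq: embedding w 1}, whose cokernel is exactly $\mathbb{C}^\times$, recovers precisely the $G_{\mathsf{v}}$-action defining the slant sum. Because $\tilde G$ and ${\bf{N}}={\bf{N}}^{(1)}\oplus{\bf{N}}^{(2)}$ are honest products of gauge theories, the product property of Coulomb branches (already used in the proof of Proposition \ref{prop_slant_sum_of_coulomb}) gives
\[
\mathcal{M}_C(\tilde G,{\bf{N}})\simeq \mathcal{M}_C^{(1)}\times \mathcal{M}_C(G_{\mathsf{v}^{(2)}}\times\mathbb{C}^\times,{\bf{N}}^{(2)}),\qquad \mathcal{A}_C(\tilde G,{\bf{N}})\simeq \mathcal{A}_C^{(1)}\otimes \mathcal{A}_C(G_{\mathsf{v}^{(2)}}\times\mathbb{C}^\times,{\bf{N}}^{(2)}),
\]
compatibly with the integrable systems and the $\mathsf{T}_Q$-actions.

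Finally I would feed this into \cite[Proposition 3.18]{BFNII}. That result applies to the short exact sequence $1\to G_{\mathsf{v}}\to \tilde G\to \mathbb{C}^\times\to 1$ determined by \eqref{eq: embedding w 1}: the Pontryagin dual of the cokernel is a one-dimensional torus $\mathbb{C}^\times\subset \pi_1(\tilde G)^\wedge$ acting Hamiltonianly on $\mathcal{M}_C(\tilde G,{\bf{N}})$, and loc.\ cit.\ identifies $\mathcal{M}_C(G_{\mathsf{v}},{\bf{N}})$ with the level-zero Hamiltonian reduction $\mathcal{M}_C(\tilde G,{\bf{N}})/\!\!/\mathbb{C}^\times$, together with the quantum counterpart $\mathcal{A}_C\simeq \mathcal{A}_C(\tilde G,{\bf{N}})/\!\!/\mathbb{C}^\times$ via quantum Hamiltonian reduction. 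Substituting the product decomposition of the previous step yields both isomorphisms in the statement.

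The one point requiring genuine care—and the only real obstacle—is lining up the hypotheses and conclusion of \cite[Proposition 3.18]{BFNII} with our data: one must check that the reducing torus there is exactly the dual of the cokernel of \eqref{eq: embedding w 1}, that the reduction is at level $0$, and that the surviving $Q^{(2)}$-framings at vertices $\neq \star_2$ (absent under the stronger Assumption \ref{eq: assumption w}) remain inert flavor data on both sides and do not interfere with the reduction. Once the exact sequence and the compatible ${\bf{N}}$-actions are in place, these are precisely the inputs loc.\ cit.\ requires, so the proposition follows immediately.
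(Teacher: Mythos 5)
Your proposal is correct and follows essentially the same route as the paper: the paper's entire argument consists of fixing the identification $W^{(2)}_{\star_2}\simeq V^{(1)}_{\star_1}\simeq\mathbb{C}$, observing that the graph embedding \eqref{eq: embedding w 1} (with cokernel $\mathbb{C}^\times$) intertwines the product-theory action on ${\bf{N}}^{(1)}\oplus{\bf{N}}^{(2)}$ with the slant-sum action on ${\bf{N}}$, and then citing \cite[Proposition 3.18]{BFNII}. Your write-up merely spells out the details (the product property of Coulomb branches, the level-zero reduction by the dual of the cokernel) that the paper leaves implicit.
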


\begin{remark}
Note that     $\mathcal{M}_C(G_{\dv^{(2)}} \times \mathbb{C}^\times,{\bf{N}}^{(2)})$ is nothing else but the Coulomb branch for the quiver $'Q^{(2)}$ that is obtained from $Q^{(2)}$ by adding a vertex $\star_1$ to $Q^{(2)}_0$ connected to the vertex $\star_2$ and then putting $\mathsf{v}_{\star_1}=1$, $\mathsf{w}_{\star_1}={\mathsf{w}}_{\star_2}=0$.  
\end{remark}

Let's now compare Propositions \ref{prop_slant_sum_of_coulomb}, \ref{eq: prop coulomb via hamiltonian} above. There is a natural splitting of (\ref{eq:s.e.s}) given by 
\begin{equation}\label{eq:our_splitting}
\mathbb{C}^\times \ni t \mapsto \{1\} \times (\iota_k(t))_{k \in Q_0^{(2)}} \times \{t\} \in G_{\mathsf{v}} \times \mathbb{C}^\times,
\end{equation}
where $\iota_k\colon \mathbb{C}^\times \rightarrow GL(V_k^{(2)})$ is given by $t \mapsto \operatorname{diag}(t,\ldots,t)$. Now, if ${\mathsf{w}}^{(2)}_{i}=0$ for $i \neq \star_2$, then, after the identification $G_{\mathsf{v}} \times \mathbb{C}^\times \simeq G_{\mathsf{v}^{(1)}} \times G_{\mathsf{v}^{(2)}} \times \mathbb{C}^\times$ induced by the splitting (\ref{eq:our_splitting}), the action $G_{\mathsf{v}} \times \mathbb{C}^\times \curvearrowright {\bf{N}}$ factors throught the standard action: 
\begin{equation}
G_{\mathsf{v}} \times \mathbb{C}^{\times} \twoheadrightarrow G_{\mathsf{v}^{(1)}} \times G_{\mathsf{v}^{(2)}}  \curvearrowright {\bf{N}}^{(1)} \oplus {\bf{N}}^{(2)}={\bf{N}}.
\end{equation}
In other words, $\mathbb{C}^\times$ would act {\emph{trivially}} on ${\bf{N}}$.
Indeed, the element  
\begin{equation*}
g := \{1\} \times (\iota_k(t))_{k \in Q_0^{(2)}} \times \{t\} \in G_{\mathsf{v}} \times \mathbb{C}^\times
\end{equation*}
clearly acts trivially on ${\bf{N}}^{(1)}$. Note also that ${\bf{N}}^{(2)}$ is the direct sum of $\operatorname{Hom}(W_{\star_2}^{(2)},V_{\star_2}^{(2)})=\operatorname{Hom}(V_{\star_1}^{(1)},V_{\star_2}^{(2)})$ and the terms of the form $\operatorname{Hom}(V_{k}^{(2)},V_{k'}^{(2)})$ (here we crucially use that $\mathsf{w}_i^{(2)} = 0$  for $i \neq \star_2$). The element $g$ as above acts trivially on both $\operatorname{Hom}(V_{k}^{(2)},V_{k'}^{(2)})$ and $\operatorname{Hom}(W_{\star_2}^{(2)},V_{\star_2}^{(2)})$. 

We conclude that 
\begin{equation*}
\mathcal{M}_C(G_{\mathsf{v}} \times \mathbb{C}^\times,{\bf{N}}) \simeq \mathcal{M}_C(G_{\mathsf{v}^{(1)}} \times G_{\mathsf{v}^{(2)}},{\bf{N}}^{(1)} \oplus {\bf{N}}^{(2)}) \times \mathcal{M}_C(\mathbb{C}^\times,0) = \mathcal{M}_C^{(1)} \times \mathcal{M}_C^{(2)} \times T^*\mathbb{C}^\times
\end{equation*}
so 
\begin{equation*}
\mathcal{M}_C = \mathcal{M}_C(G_{\mathsf{v}} \times \mathbb{C}^\times,{\bf{N}})/\!\!/\!\!/ \mathbb{C}^\times = (\mathcal{M}_C^{(1)} \times \mathcal{M}_C^{(2)} \times T^*\mathbb{C}^\times)/\!\!/\!\!/\mathbb{C}^\times = \mathcal{M}_C^{(1)} \times \mathcal{M}_C^{(2)} 
\end{equation*}
which is exactly Proposition \ref{prop_slant_sum_of_coulomb}.

\begin{remark}
Note that Proposition \ref{eq: prop coulomb via hamiltonian} is a {\emph{very particular}} case of a much more general and useful result \cite[Theorem 4.2]{Nak_S_dual} (this follows from \cite[Example 3.8 (1)]{Nak_S_dual}). We are grateful to Hiraku Nakajima for pointing out this relation as well as for the explanations on the material of this Section. 
As the results of this paper suggest, things become much more interesting when we slant quivers with ${\mathsf{w}}_{\star_2}^{(2)}>1$. In this case our slant sum construction and gluing in \cite[Theorem 4.2]{Nak_S_dual} seem to be different procedures. 
\end{remark}

\subsubsection{General framing}\label{sec:general framing}

For $\dw^{(2)}_{\star_2}>1$, we do not know if $\mathcal{M}_C$, $\mathcal{M}_C^{(1)}$, $\mathcal{M}_C^{(2)}$ are related. 
Nevertheless, the $\hbar=q$ specialized vertex functions restricted to fixed points are expected to equal graded traces of Verma modules over quantized Coulomb branches. So, our Higgs side computation suggests a relation between modules over some twisted versions of the (quantized) Coulomb branches for $(Q,\mathsf{v},\mathsf{w}), (Q^{(1)},\mathsf{v}^{(1)},\mathsf{w}^{(1)})$, and $(Q^{(2)},\mathsf{v}^{(2)},\mathsf{w}^{(2)})$, rather than a relation between the Coulomb branches themselves.

Namely, one can consider the category of so-called Gelfand-Tsetlin modules over the quantized Coulomb branch $\mathcal{A}_Q$, see \cite{webster_GZ}. Let ${\mathsf{A}}_{\mathsf{v}} \subset G_{\mathsf{v}}$ be a maximal torus. It follows from \cite{webster_GZ} (see also \cite[Appendix B]{NW} and \cite{VV} for the geometric treatment) that ``blocks'' of categories of GT-modules over $\mathcal{A}_Q$ are equivalent to categories of (topologically nilpotent) modules over the convolution algebra $\widehat{H}_*^{Z_{G}(\sigma_{\mathsf{v}}) \times \mathsf{A}}(\mathcal{T}_Q^{(\sigma(t),t)} \times_{{\bf{N}}_{\mathcal{K}}^{(\sigma(t),t)}} \mathcal{T}_Q^{(\sigma(t),t)})$. Such a block depends on a choice of a cocharacter $\sigma\colon \mathbb{C}^\times \rightarrow \mathsf{A}_{\mathsf{v}} \times \mathsf{A}$. Here $\mathcal{T}_Q=G_{\mathcal{K}} \times^{G_{\mathcal{O}}} {\bf{N}}_{\mathcal{O}}$. Fixed points $\mathcal{T}_Q^{(\sigma(t),t)}$ can be explicitly computed and turn out to be equal to the disjoint unions of products of the corresponding fixed points on $\mathcal{T}_{Q^{(1)}}$ as well as fixed points on some twisted version of $\mathcal{T}_{Q^{(2)}}$, this is completely parallel to Theorem \ref{thm: qm and slant sums}, where we provide a similar description of torus fixed based quasimaps to the resolved Higgs branch for $Q$. As a corollary of the above, one should be able to recover a branching formula for a graded trace of a GT-module over $\mathcal{A}_Q$ reflecting our Theorem \ref{thm: vertex and slant sums intro} on the Coulomb side. It would be very interesting to work out the details.

\printbibliography

\pagebreak

\noindent
Hunter Dinkins\\
Massachusetts Institute of Technology\\
Cambridge, MA\\
hunte864@mit.edu\\
\\
\noindent
Vasily Krylov\\
Harvard University \\
Cambridge, MA\\
vkrylov@math.harvard.edu \\
\\
\noindent
Reese Lance\\
University of North Carolina\\
Chapel Hill, NC\\
rlance@unc.edu \\
\end{document}